\documentclass[11pt, reqno]{amsart}
\usepackage{amsmath}
\usepackage{amssymb}
\usepackage{esint, enumitem}
\usepackage[hidelinks]{hyperref}
\usepackage{color}
\newtheorem{theorem}{Theorem}[section]
\newtheorem{lemma}[theorem]{Lemma}
\newtheorem{proposition}[theorem]{Proposition}
\newtheorem{corollary}[theorem]{Corollary}
\newtheorem{problem}[theorem]{Problem}

\theoremstyle{definition}
\newtheorem{definition}[theorem]{Definition}

\theoremstyle{remark}
\newtheorem{remark}[theorem]{Remark}
\newtheorem{claim}{Claim}

\numberwithin{equation}{section}

\newcommand{\bb}[1]{\mathbb{#1}}
\newcommand{\innerproduct}[1]{\langle #1 \rangle}

\allowdisplaybreaks[4]

\title[Interior Estimates and Regularity for the scalar curvature equation]{Interior Estimates and Regularity for the Scalar Curvature Equation in Dimension 4}

\date{\today}

\author{Zhenyu Fan}
\address{School of Mathematical Sciences, Peking University, Beijing, 100871, P. R. China}
\email{fanzhenyu@stu.pku.edu.cn}

\author{Ravi Shankar}
\address{Department of Mathematics, Fine Hall, Princeton University, Princeton,
NJ}
\email{rs1838@princeton.edu}

\begin{document}

\subjclass[2010]{35B45;\,
                35B65;\,
                35J60;\,
                53C42. 
                }
\keywords{}

\begin{abstract}
   We establish a priori interior curvature estimates for the scalar curvature equation in dimension 4. Our approach relies on the doubling method introduced by Shankar and Yuan \cite{Shankar-Yuan-annals}. Key to the proof are an a priori doubling inequality under a small gradient assumption and the Alexandrov regularity for viscosity solutions; the latter is obtained via a new iterative rotation argument. Furthermore, we establish interior regularity for $C^1$ viscosity solutions.
\end{abstract}

\maketitle

\tableofcontents

\section{Introduction}

In this article, we study interior curvature estimates and regularity for a hypersurface $\Sigma^n \subset \mathbb{R}^{n+1}$ with positive scalar curvature in dimension $n=4$. By the Gauss equation, the intrinsic scalar curvature of $\Sigma$ is given by
\[  R=\sum_{1\leq i<j\leq n} \kappa_i \kappa_j = \sigma_{2}(\kappa),  \]
where $\kappa= (\kappa_1, \dots, \kappa_n) $ are the principal curvatures of $\Sigma$. Consequently, when $\Sigma$ can be locally represented as a graph over $B_r\subset \bb{R}^n$, this is equivalent to studying the following scalar curvature equation, also known as the $\sigma_2$-curvature equation:
 \begin{equation}\label{eq: scalar curvature eq}
        \sigma_{2}(\kappa)=\sum_{1\leq i<j\leq n}\kappa_i\kappa_j=1.
\end{equation}

In dimension two, \eqref{eq: scalar curvature eq} reduces to a Monge-Amp\`ere equation $\det D^2u = (1+|Du|^2)^2$, whose geometric origins lie in the prescribed Gaussian curvature problem (the Minkowski problem) and the Weyl problem in isometric embedding, see \cite{Nirenberg, Pogorelov, Pogorelov-book}. 

Equation \eqref{eq: scalar curvature eq} is of independent interest in the theory of fully nonlinear elliptic equations. In the seminal series of works \cite{CNS3,CNS5}, Caffarelli, Nirenberg and Spruck introduced the $\sigma_k$-Hessian and $\sigma_k$-curvature equations:
\[ \sigma_k(D^2u)= \sum_{1\leq i_1<\cdots<i_k\leq n} \lambda_{i_1}\cdots\lambda_{i_k}=1 \quad \text{and} \quad \sigma_k(\kappa(u))= \sum_{1\leq i_1<\cdots<i_k\leq n} \kappa_{i_1}\cdots\kappa_{i_k}=1, \]
where $\lambda_i's$ are the eigenvalues of the Hessian $D^2u$, and $\kappa_i's$ are the principal curvatures of the graph $\Sigma=(x,u(x))$. For $k=n$, both correspond to the Monge-Amp\`ere equation. Interior $C^2$ estimates for the two-dimensional Monge-Amp\`ere equation were established by Heinz \cite{Heinz}. In higher dimensions $n\ge 3$, Pogorelov's $C^{1, 1-\frac{2}{n}}$ singular solutions \cite{Pogorelov-book} preclude establishing a priori interior $C^2$ estimates and $C^2$ regularity for the Monge-Amp\`ere equation. Later, Urbas \cite{Urbas-c-eg} extended Pogorelov's counterexamples to $\sigma_k$-Hessian and $\sigma_k$-curvature equations for $n\ge k\ge 3$. This leaves a longstanding open problem:
\begin{problem}\label{PROB}
    For $n\ge 3$, whether a priori interior $C^2$ estimates and regularity can be established for the $\sigma_2$-Hessian equation $\sigma_2(D^2u)=1$ and the $\sigma_2$-curvature equation $\sigma_2(\kappa)=1$?
\end{problem}

To date, this problem is completely resolved in dimension three. Interior estimates for the Hessian equation were established by Warren and Yuan \cite{Warren-Yuan-3D}. Qiu \cite{Qiu-Hessian-24}, Zhou \cite{ZhouXC} and Xu \cite{XuYF} further extended them to the case with general right-hand sides. The corresponding estimates for the curvature equation were obtained by Qiu \cite{Qiu-curvature}. 

In dimension four, interior estimates have only been established for the Hessian equation: first by Shankar and Yuan \cite{Shankar-Yuan-annals}, and later extended by Fan \cite{Fan-Hessian} to the case with general right-hand sides. In this article, we partially resolve the aforementioned open problem for the curvature equation in dimension four. Our first result is the following curvature estimate:

\begin{theorem}\label{THM: curvature est}
    Let $\Sigma=(x,u(x))$ be a smooth $2$-convex graph over $B_1\subset \bb{R}^4$ satisfying the scalar curvature equation \eqref{eq: scalar curvature eq} on $B_1$. Then we have an implicit curvature estimate
    \begin{equation}\label{eq: curvature est}
        |\kappa(0)|\leq C \left(4, \|u\|_{C^1(B_1)}, \omega_{Du} \right),
    \end{equation}
    where $\|u\|_{C^1(B_1)}=\|u\|_{L^{\infty}(B_1)}+\|Du\|_{L^{\infty}(B_1)}$ and $\omega_{Du}$ denotes the modulus of continuity of $Du$. 
\end{theorem}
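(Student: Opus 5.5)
We argue by contradiction and compactness, using the doubling scheme of Shankar and Yuan \cite{Shankar-Yuan-annals}. Suppose the estimate fails: there are smooth $2$-convex solutions $u_k$ on $B_1$ with $\|u_k\|_{C^1(B_1)}\le A$, a common modulus of continuity $\omega$ for $Du_k$, and $|\kappa_k(0)|\to\infty$. By Arzel\`a--Ascoli, a subsequence converges in $C^1(\overline{B_{3/4}})$ to some $u\in C^1$. Since viscosity solutions are stable under uniform limits, $u$ is a viscosity solution of \eqref{eq: scalar curvature eq}. The aim is to prove that $u$ is in fact smooth near $0$ with uniform control, and then to transfer that control back to the $u_k$, which contradicts $|\kappa_k(0)|\to\infty$.

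\textbf{Step 1: a priori doubling inequality under small gradient.} First we show the following. If a smooth solution on $B_r(x_0)$ has small oscillation of $Du$ there (say $|Du-Du(x_0)|\le\delta$ after a rotation making $Du(x_0)=0$), then
\[
\sup_{B_{r/2}}\log\lambda_{\max}\le C+\sup_{B_{r/4}}\log\lambda_{\max},
\]
or, alternatively, a Jacobi-type inequality $\Delta_g b\ge \varepsilon|\nabla_g b|^2$ holds for $b=\log\lambda_{\max}$ (or a regularized version of it) with respect to the induced metric. In dimension $4$ this comes from the $\sigma_2$ structure: $b$ is subharmonic, and a mean value (Michael--Simon) inequality on the graph, combined with the almost-flatness supplied by the small gradient, yields the doubling. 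The continuity of $Du$ is what provides small gradient oscillation on small balls. Rotating so that $Du(x_0)=0$ turns the $C^1$ modulus into a uniform smallness condition at a scale $r(\omega)$.

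\textbf{Step 2: Alexandrov regularity of the limit.} Next we show that the $C^1$ viscosity solution $u$ is twice differentiable almost everywhere with a second-order Taylor expansion. Semiconvexity is not available in general, since $2$-convexity does not give it. So we use an iterative rotation argument: near a point we tilt coordinates so the graph is nearly flat, apply a weak Hessian bound coming from $\sigma_1>0$ (subharmonicity) and from the $W^{2,\delta}$-type integrability available for $\sigma_2$, and iterate over scales, re-rotating each time, so that the measure of ``bad'' points decays geometrically. This yields a point $x^*$ arbitrarily close to $0$ (in fact a.e. point) at which $u$ has a quadratic expansion.

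\textbf{Step 3: smoothness and the contradiction.} At such a point $x^*$, blowing up gives a quadratic approximation. Savin's small perturbation theorem for the uniformly elliptic, smooth operator near the corresponding $2$-convex quadratic then gives $u\in C^{2,\alpha}$ near $x^*$, and therefore $\kappa_k$ is bounded on a small ball $B_\rho(x^*)$ for large $k$ (Savin's theorem applies to the $u_k$, since they are close to the same quadratic). We then propagate this bound by repeated doubling from Step 1: from $B_\rho(x^*)$ to $B_{2\rho}(x^*)$, and so on, with a number of steps depending only on $\rho$. Step 1 is uniform in $k$ because the $u_k$ share the modulus $\omega$. This gives a uniform bound on $\kappa_k(0)$, the desired contradiction. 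Since $\rho$ could depend on the limit $u$, the estimate is only implicit, which matches the form of \eqref{eq: curvature est}.

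\textbf{Main obstacle.} I expect Step 2, the Alexandrov-type a.e. twice differentiability without semiconvexity, to be the hardest. Step 1's Jacobi inequality for $\sigma_2$ curvature in $n=4$ is also delicate, requiring careful handling of the third-order terms using the small gradient. For Step 2 I would first try to prove a uniform $W^{2,\varepsilon}$ estimate for the $u_k$ in rotated frames and then invoke the standard Caffarelli/Trudinger argument that a $W^{2,\varepsilon}$ decay estimate implies punctual second-order differentiability a.e.
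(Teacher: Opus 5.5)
\textbf{Verdict: genuine gap.} Your outer architecture matches the paper's: compactness, a $C^1$ viscosity limit, a.e.\ twice differentiability of the limit, Savin's theorem at a good point $x^*$ giving $k$-uniform curvature bounds on $B_\rho(x^*)$, then doubling back to $0$. Getting the small-gradient hypothesis at a uniform scale from the common modulus $\omega$ by rotating at $x^*$ is also fine. The problem is that neither of the two load-bearing steps is supplied by the mechanism you propose.

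\emph{Step 1.} A mean value inequality bounds $b$ at the center of a ball by its average over that ball. It therefore controls small balls by larger ones, which is the reverse of what doubling needs: control of the supremum on a larger ball by the supremum on an arbitrary small ball. The Michael--Simon inequality on $\Sigma$ also needs a bound on the mean curvature of $\Sigma$, and that is exactly $H=\sigma_1(\kappa)$, the quantity you are trying to estimate. You also give no source for $\Delta_g b\ge\varepsilon|\nabla_g b|^2$. The three-dimensional integral method works on Qiu's special-Lagrangian-type graph $(X,\nu)$, a structure with no analogue in dimension $4$.

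What is actually available is only the almost Jacobi inequality $F^{ij}b_{ij}\ge\varepsilon|\nabla_F b|^2-F^{ij}h_{ik}h_{jk}$ for $b=\log H$. It is taken with respect to the linearized operator $F^{ij}=H\delta_{ij}-h_{ij}$, carries a negative commutator term, and has $\varepsilon=\frac29\bigl(\frac12+\kappa_{\min}/H\bigr)$, which degenerates. The paper turns this into a doubling inequality via the maximum principle for Guan--Qiu's test function $2\log(9-|x|^2)+\alpha(x\cdot Du-u)+\frac{\beta}{2}W^2+\log\max\{b-\sup_{B_1}b,\gamma\}$. It splits into two cases:
\begin{itemize}
\item a degenerate case, $\kappa_{\min}/H$ near $-\frac12$, where the equation is conformally uniformly elliptic and $\sum_i F^{ii}\kappa_i^2$ supplies the positivity;
\item a nondegenerate case, handled through $DP=0$.
\end{itemize}
The small gradient is not used on third-order terms, which are handled algebraically. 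It is used to absorb the indefinite term $2\alpha W^2\sum_i F^{ii}d_i\langle e_i,E_{n+1}\rangle\kappa_i^2$ produced by $\Delta_F(x\cdot Du-u)$, and to keep $|d_1|\ge\frac14$.

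\emph{Step 2.} $W^{2,\varepsilon}$ decay of Caffarelli/Lin type is proved with ABP and a Calder\'on--Zygmund covering, and both use uniform ellipticity, which the scalar curvature equation lacks. You do not say where a geometric decay of bad sets would come from. $2$-convexity alone only makes $D^2u$ a Radon measure; this is the paper's $(\mathrm{H}2)$, obtained via $|a_{ij}|\le H$ and $\int H=\int\mathrm{div}(Du/W)$. The case $k=2$, $n=4$ is exactly the critical one in which this does not yield a.e.\ twice differentiability.

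The missing ingredient is the Lipschitz estimate $(\mathrm{H}3)$ for $g=u-u(x_0)-Du(x_0)\cdot(x-x_0)$, namely $\sup_{B_r(y)}|Dg|\le\frac{C_1}{r}\mathrm{osc}_{B_{C_2r}(y)}g$. In the Evans--Gariepy argument this upgrades the $L^1$ Taylor approximation to an $L^\infty$ one. The paper proves it in three moves:
\begin{itemize}
\item It first proves a linearly dependent gradient estimate $\sup_{B_{r/2}}|Du|\le\frac{C(n,\Gamma)}{r}\mathrm{osc}_{B_r}u$, using a Trudinger-type test function $\eta|Dv|+\frac{A}{2}v^2$ and the inequality $F^{ij}(|Dv|)_{ij}\ge0$.
\item It applies this estimate not to $g$, which does not solve the equation, but to the graph rotated so that its tangent plane at $x_0$ is horizontal. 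The rotated graph function equals $g/W(x_0)$ at corresponding points.
\item The rotated surface is graphical only on a ball whose radius depends on $\omega_{Du}$; this is precisely where $\omega_{Du}$ enters the estimate.
\end{itemize}
Interpolation then gives the $d^{n+1}$-weighted Lipschitz bound in terms of $\int|g|$, uniformly in $k$, and it passes to the limit. Without an estimate of this kind, your Step 2 restates the goal rather than reaching it.
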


\begin{remark}
    In higher dimensions $n\ge 5$, the curvature estimate \eqref{eq: curvature est} can be established by the same approach under the dynamic semi-convex condition:
    \[ \dfrac{\kappa_{\mathrm{min}}}{H} \ge -c(n) \quad \text{with}\quad  c(n) = \dfrac{\sqrt{3n^2+1}- n +1}{2n}, \]
    where $\kappa_{\mathrm{min}}$ denotes the minimum principal curvature of $\Sigma$. We need this condition only for the almost Jacobi inequality in Section \ref{Section: Almost Jacobi Inequality} to hold in higher dimensions \cite{Shankar-Yuan-annals}.
\end{remark}

\begin{remark}
     The curvature estimate \eqref{eq: curvature est} depends on the modulus of continuity of the gradient. Hence, we cannot use \eqref{eq: curvature est} to directly derive the regularity of $C^1$ viscosity solutions to \eqref{eq: scalar curvature eq}  via the standard smooth approximation argument, since smooth approximations of a $C^1$ viscosity solution may not admit a uniform modulus of continuity of the gradient.
\end{remark}

Our next result independently establishes interior regularity for $C^1$ viscosity solutions.
\begin{theorem}[Regularity]\label{THM: regularity for C^1 solutions}
    Let $\Sigma=(x,u(x))$ be a $C^1$ $2$-convex graph over $B_1\subset \bb{R}^4$, and let $u$ be a viscosity solution to \eqref{eq: scalar curvature eq} on $B_1$, then $u$ is smooth in $B_1$.
\end{theorem}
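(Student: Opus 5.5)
The plan is to follow the Shankar--Yuan doubling strategy for $C^1$ viscosity solutions, with the a priori estimate of Theorem \ref{THM: curvature est} replaced by ingredients that survive the viscosity setting.

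\textbf{Step 1: Alexandrov regularity.} First show that $u$ is twice differentiable almost everywhere (Alexandrov regularity). For $2$-convex viscosity solutions of $\sigma_2(\kappa)=1$, the function $u$ is not semiconvex a priori, so the classical Alexandrov theorem does not apply. Here I would use the iterative rotation argument announced in the abstract. At a point, rotate the graph $\Sigma$ in $\mathbb{R}^5$ so that it becomes a graph over a tilted hyperplane whose gradient is small. In the rotated frame the equation is uniformly controlled, and one can show the new potential is semiconvex up to a small error: $\sigma_2$-positivity together with a small gradient gives a lower bound on $\kappa_{\min}$ relative to $H$ in dimension $4$. Iterating the rotation along a sequence of scales yields a quadratic lower envelope. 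Combining this with the upper bound from subharmonicity-type control ($H>0$) produces second-order Taylor expansions at almost every point.

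\textbf{Step 2: smooth approximation with small gradient.} Fix an Alexandrov point $x_0$ and rotate coordinates so that $Du(x_0)=0$. On a small ball $B_r(x_0)$, the $C^1$ hypothesis gives $|Du|\le \varepsilon$ for $r$ small. Solve the Dirichlet problem for $\sigma_2(\kappa)=1$ on $B_r(x_0)$ with smoothed boundary data; the Caffarelli--Nirenberg--Spruck theory provides this after a slight uniform-convexity adjustment of the domain. Uniqueness of viscosity solutions (comparison principle) gives $u_k\to u$ uniformly. For the smooth approximants $u_k$, the gradient stays small on a slightly smaller ball by the comparison and barrier estimates.

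\textbf{Step 3: doubling inequality and conclusion.} The a priori doubling inequality under a small gradient assumption gives
\[\sup_{B_{r/2}}|\kappa_k|\le C\big(\sup_{B_{r/4}}|\kappa_k|\big)\]
in a form that depends only on $\varepsilon$, not on a modulus of continuity. Combined with the Jacobi-type inequality and the mean value property, this yields an estimate in terms of an integral of $\ln(1+|\kappa|)$ or of the area. At the Alexandrov point $x_0$, $u$ is close to a quadratic, so the approximants have uniformly controlled Hessian in an integral sense near $x_0$. Applying the doubling inequality then bounds the curvature of $u_k$ uniformly on a neighborhood of $x_0$. Hence $u$ is $C^{1,1}$ there, Evans--Krylov gives $C^{2,\alpha}$, and Schauder bootstrapping gives smoothness near $x_0$.

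This shows the smooth set is open and dense. To show it is all of $B_1$, I would combine the doubling estimate with a propagation argument: from each smooth ball, bounds propagate to nearby points with constants depending only on the gradient smallness. A connectedness argument then finishes the proof.

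\textbf{Main obstacle.} I expect Step 1 to be the main obstacle, namely obtaining semiconvexity control after rotation with errors that sum over the iteration. Without it there is no point at which to start the doubling.
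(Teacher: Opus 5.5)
Your overall architecture (Alexandrov regularity, then smooth approximation, then doubling under a small gradient assumption) matches the paper's. However, Step 1 rests on a mechanism that fails. Rotating so that the gradient is small does not produce semiconvexity, not even ``up to a small error''. The lower bound $\kappa_{\min}>-H/2$ comes from $2$-convexity in dimension $4$ (Lemma \ref{LEMMA: control on min eigenvalue}) and does not depend on the size of the gradient. Since $H$ is not a priori bounded, it gives no quadratic lower envelope. Likewise, $H>0$ only makes $u$ a subsolution of the minimal surface equation and gives no quadratic upper envelope.

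The paper instead runs the Evans--Gariepy proof of Alexandrov's theorem, which needs two inputs:
\begin{itemize}
\item[(a)] The Hessian is a Radon measure. This holds because $|D^2u_k|\le C(n,\Gamma)H_k$ for $2$-convex graphs, and $\int H_k=\int\mathrm{div}(Du_k/W_k)$ is bounded by the boundary area.
\item[(b)] The function $g=u-u(x_0)-Du(x_0)\cdot(x-x_0)$ satisfies a Lipschitz estimate that is \emph{linear} in $\int|g|$.
\end{itemize}
Estimate (b) comes from a Trudinger-type gradient estimate $\sup_{B_{r/2}}|Dv|\le C(n,\Gamma)r^{-1}\mathrm{osc}_{B_r}v$ (Proposition \ref{PROP: linear dependent grad est}). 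Korevaar is used only to supply the a priori bound $\Gamma$. The estimate is applied to the graph rotated so that its tangent plane at $x_0$ is horizontal, whose graph function is exactly $g/W(x_0)$.

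The iterative rotation (Lemma \ref{LEMMA: rotation revisited}) is a first-order device, not a second-order one. Repeated use of Korevaar's estimate improves the lower bound on $\arctan u_n$ at smaller scales. This shows that the rotated surface stays graphical on a ball whose radius depends only on $n$, $\Gamma$ and the $o(r)$ modulus of $g$. The point is that this holds uniformly over approximants that have no common modulus of continuity for their gradients.

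Steps 2--3 also have gaps.
\begin{itemize}
\item[(1)] \emph{Small gradient.} Comparison and barriers give $u_k\to u$ uniformly, but not small interior gradients. Korevaar's bound $C(n)\exp\{C(n)(\mathrm{osc}/r)^2\}$ does not become small as $\mathrm{osc}/r\to0$. What gives $|Du_k|\le\mu$ is again the linear estimate, $\sup_{B_{r/2}}|Du_k|\le C(\Gamma)r^{-1}\big(\sup_{B_r}|u_k-u|+\sigma(r)\big)$, with $\sigma(r)=o(r)$ coming from the $C^1$ hypothesis.
\item[(2)] \emph{Starting curvature bound.} Closeness of $u$ to a quadratic $Q$ at an Alexandrov point says nothing by itself about the Hessians of $u_k$, and the doubling inequality needs a \emph{pointwise} bound on $H(u_k)$ on some small ball. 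The mean-value/integral route is not available here: in dimension $4$ one only has an almost-Jacobi inequality, which is exactly why the doubling method is used. The missing tool is Savin's small perturbation theorem. Apply it to $r^{-2}(u_k-Q)(y+rx)$, which is $\delta$-small for a fixed small $r$ and large $k$, and hence uniformly $C^{2,\alpha}$ near $y$.
\item[(3)] \emph{Propagation.} The open-dense and connectedness step is unnecessary. Center the argument at an \emph{arbitrary} point, rotating so that $Du=0$ there; this uses only $C^1$. Corollary \ref{COR: rescaled doubling} allows the small ball $B_\rho(y)$ to lie anywhere in $B_{r_1/18}$, so choose $y$ to be an Alexandrov point, since these have full measure.
\end{itemize}
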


    A priori gradient estimates for the scalar curvature equation \eqref{eq: scalar curvature eq} were established by Korevaar \cite{Korevaar}. Thus, continuous viscosity solutions to \eqref{eq: scalar curvature eq} are known to be Lipschitz. Our result improves the regularity of viscosity solutions from $C^1$ to smooth, so there still remains a gap between Lipschitz and $C^1$ regularity.  There is a Lipschitz counterexample of Caffarelli-Yuan \cite{Caffarelli-Yuan} to the Monge-Amp\`ere / Gauss curvature type equation.

    We first review the development of  interior $C^2$ estimates for the $\sigma_2$-Hessian and $\sigma_2$-curvature equations. In dimension two, for the Monge-Amp\`ere equation, Heinz \cite{Heinz} first established $C^2$ estimates via isothermal coordinates. In recent years, Chen, Han and Ou \cite{Chen-Han-Ou} provided a maximum principle proof, and Liu \cite{LiuJK} gave an alternative proof via the partial Legendre transform. Furthermore, using Guan-Qiu's test function in \cite{Guan-Qiu}, one can obtain another pointwise proof. 

    In dimension three, the Hessian equation $\sigma_2(D^2u)=1$ corresponds to the special Lagrangian equation with critical phase $\arctan \lambda_1 + \arctan\lambda_2 + \arctan \lambda_3 = \frac{\pi}{2}$. In this case, the gradient graph $(x,Du(x))$ is a minimal (in fact volume minimizing) submanifold in $\bb{R}^3\times\bb{R}^3$. Using this minimal surface structure, Warren and Yuan \cite{Warren-Yuan-3D} employed the integral method of Bombieri, De Giorgi and Miranda \cite{B-DG-M} for gradient estimates of codimension 1 minimal graphs to obtain the $C^2$ estimate for $u$. For the curvature equation, Qiu \cite{Qiu-curvature} discovered a similar ``special Lagrangian" structure, in which the ``Lagrangian" graph $(X,\nu)$ has bounded mean curvature in $(\bb{R}^4\times \bb{R}^4, \mathrm{d}x^2 + \mathrm{d}y^2)$. He then also obtained the curvature estimate via the integral method. Later, utilizing this ``special Lagrangian" structure, Qiu and Zhou \cite{Qiu-Zhou} further studied gradient and curvature estimates for the special Lagrangian curvature equation $\sum \arctan\kappa_i = \Theta$.  

    In dimension four, Shankar and Yuan made a breakthrough by introducing a new doubling method to establish interior $C^2$ estimates
    for the Hessian equation. In this article, we apply this doubling method to the curvature equation case. This method is discussed in detail later.

    In higher dimensions $n\ge 5$, Problem \ref{PROB} remains open. However, partial results have been obtained under certain convexity constraints on the solutions. For example, Guan and Qiu \cite{Guan-Qiu}, as well as Chen, Jian and Zhou \cite{Chen-Jian-Zhou} established interior $C^2$ estimates for convex solutions to both the Hessian and curvature equations. Regarding the Hessian equation specifically,  Mooney \cite{Mooney} derived the strict $2$-convexity for convex viscosity solutions, and consequently proved regularity for convex viscosity solutions, see also \cite{Chen-Jian-Tu-Zhou}; McGonagle, Song and Yuan \cite{M-Song-Yuan} established interior $C^2$ estimates for almost convex solutions via a compactness argument, and Shankar and Yuan \cite{Shankar-Yuan-reg} later proved their regularity using the Legendre-Lewy transform; additionally, Shankar and Yuan \cite{Shankar-Yuan-semi-coonvex} also established interior $C^2$ estimates for semi-convex solutions via an integral method.

    Furthermore, for the Hessian equation, Urbas \cite{Urbas-Hessian-00, Urbas-Hessian-01}, Shankar-Yuan \cite{Shankar-Yuan-semi-coonvex} and Mooney \cite{Mooney-rmk-25} established $C^2$ estimates in terms of certain integrals of the Hessian. For the curvature equation, Urbas \cite{Urbas-curvature-00, Urbas-curvature-03} established similar curvature estimates in terms of certain integrals of the mean curvature; notably, his curvature estimates already exhibited the dependence on the modulus of continuity of the gradient.

    We now explain our strategy for proving Theorem \ref{THM: curvature est} and Theorem \ref{THM: regularity for C^1 solutions} using the doubling method. The doubling method consists of two parts: a doubling inequality and partial regularity. The first part is due to an observation by Qiu \cite{Qiu-Hessian-24}. For the three-dimensional $\sigma_2$-Hessian equation $\sigma_{2}(D^2u)=1$, Qiu proved that when the Jacobi inequality
    \begin{equation}\label{eq: Jacobi}
        F^{ij} \partial^{2}_{ij}(\log \sigma_1) \ge \varepsilon F^{ij}\partial_i(\log \sigma_1)\partial_i(\log \sigma_1)
    \end{equation}
    is valid, where $F^{ij}$ denotes the linearized operator, then the following doubling inequality 
    \begin{equation}
        \sup_{B_1} \sigma_1 \leq C\left(n, \|u\|_{C^1} \right) \sup_{B_{1/2}} \sigma_1
    \end{equation}
    can be established via the maximum principle using Guan-Qiu's test function involving $x\cdot Du -u$ \cite{Guan-Qiu}. By rescaling and iteration, this doubling inequality implies that controlling the Hessian on a small ball yields control at larger scales.  In dimension three, the Jacobi inequality \eqref{eq: Jacobi} holds for $\varepsilon=1/3$. In dimension four, we can only establish an ``almost Jacobi" inequality which means that $\varepsilon$ may degenerate to 0 somewhere. Nevertheless, Shankar and Yuan \cite{Shankar-Yuan-annals} still proved the doubling inequality via a slight modification of Qiu's argument. 
    
    In the context of our four-dimensional scalar curvature equation, an almost Jacobi inequality can still be established, albeit with an additional commutator term $-F^{ij}h_{ik}h_{jk}$, which can be controlled in the maximum principle argument. However, when applying the linearized operator of the curvature equation to Guan-Qiu's test function, the term $\Delta_{F}(x\cdot Du-u)$ introduces a bad term that is difficult to control. Note that the coefficient of this bad term depends on $|Du|$, hence we additionally assume that the gradient $|Du|$ is sufficiently small, making this bad term small enough to be handled. Consequently, we can establish a doubling inequality under a small gradient assumption, see Proposition \ref{PROP: doubling}.

    To obtain the control at small scales, we also need to establish partial regularity. For the Hessian equation, Shankar and Yuan \cite{Shankar-Yuan-annals} modified the proof of the Alexandrov theorem \cite{Evans-Gariepy, Chaudhuri-Trudinger} to prove the almost everywhere twice differentiability of viscosity solutions to $\sigma_{2}(D^2u)=1$. Combining this with Savin's small perturbation theorem ($\varepsilon$-regularity theorem) \cite{Savin} yields a partial regularity result: the singular set of viscosity solutions to $\sigma_2(D^2u)=1$ has Lebesgue measure zero. Finally, a compactness argument gives an implicit $C^2$ estimate.
    
    For the scalar curvature equation, establishing Alexandrov regularity for viscosity solutions is the most challenging part of this work. Recall from \cite{Evans-Gariepy} that the proof of the Alexandrov theorem mainly consists of two steps: interpreting the distributional Hessian as Radon measures, and establishing linearly dependent gradient estimates of the form:
    \begin{equation}\label{eq: 1.5}
        \sup_{B_{1/2}} |D(u-l)| \leq C\,  \underset{B_{1}}{\mathrm{osc}} (u-l), 
    \end{equation}
    where $l$ is the linear part of $u$. For the Hessian equation, interpreting the distributional Hessian as Radon measures is guaranteed by the $2$-convexity of admissible solutions, and the linearly dependent gradient estimates \eqref{eq: 1.5} were proved by Trudinger \cite{Trudinger-CPDE} and Chou-Wang \cite{Chou-Wang}. For the scalar curvature equation, the $2$-convexity of admissible solutions still ensures the Radon measure interpretation. However, the linearly dependent gradient estimates are difficult to obtain. The first difficulty is that the known gradient estimates for curvature equations established by Korevaar \cite{Korevaar} exhibit quadratic exponential dependence. To obtain linear dependence, we note that Korevaar's result provides an a priori gradient bound $\|Du\|_{L^{\infty}(B_1)}=\Gamma$. We can then improve this bound to a linearly dependent gradient estimate for $u$ at smaller scales:
    \begin{equation}\label{eq: 1.6}
        \sup_{B_{r/2}(x)} |Du| \leq \dfrac{C(n,\Gamma)}{r}\,  \underset{B_{r}(x)}{\mathrm{osc}} u, \quad \text{for any ball}\ B_r(x)\subset B_1.
    \end{equation}
    The constant $C$ here naturally depends on the a priori gradient bound $\Gamma$. The second difficulty is that after subtracting the linear part from $u$, the function $u-l$ no longer satisfies the scalar curvature equation \eqref{eq: scalar curvature eq}. Hence, \eqref{eq: 1.6} cannot provide the desired estimate for $u-l$. To overcome this  difficulty, our strategy is to rotate the graph $\Sigma=(x, u(x))$ so that $Du$ vanishes at a given point, thereby eliminating the need to subtract a linear function, Applying \eqref{eq: 1.6} to the rotated function then yields the desired estimate \eqref{eq: 1.5} for $u-l$. A subtle issue here is that $\Sigma$ is only locally graphical after rotation. By a rough estimate, the size of the domain where $\Sigma$ remains graphical after rotation depends on the modulus of continuity of $Du$. Therefore, we can obtain \eqref{eq: 1.5} with the constant $C$ depending on the a priori gradient bound $\Gamma$ and the modulus of continuity of the gradient, see Section \ref{subsec: Linearly dependent gradient estimates}. This suffices to derive the curvature estimate \eqref{eq: curvature est} in the final compactness argument.

    In Section \ref{Section: Alexandrov Regularity Revisited}, we refine our rotation argument. Through a finite number of iterative rotations, we obtain a more precise estimate for the size of the domain where $\Sigma$ remains graphical after rotation; in particular, this new estimate is independent of the modulus of continuity of the gradient (see Lemma \ref{LEMMA: rotation revisited}). This allows us to establish Alexandrov regularity for all viscosity solutions to the scalar curvature equation \eqref{eq: scalar curvature eq}. Consequently, we can directly establish interior regularity for $C^1$ viscosity solutions to \eqref{eq: scalar curvature eq} via the doubling argument. Although smooth approximations $\{u_k\}$ of a  $C^1$ viscosity solution $u$ may not necessarily share the curvature estimate \eqref{eq: curvature est}, combining Alexandrov regularity with Savin's small perturbation theorem \cite{Savin, Fan} yields a uniform curvature bound for $u_k$ at sufficiently small scales. Furthermore, by virtue of the $C^1$ assumption on $u$, we can use the linearly dependent gradient estimate \eqref{eq: 1.6} to verify that the small gradient assumption in Proposition \ref{PROP: doubling} holds uniformly for $u_k$ at some universal scale. Therefore, the doubling inequality provides a uniform curvature bound for $u_k$ at larger scales, thereby implying interior regularity for the $C^1$ viscosity solution $u$. 

    Finally, we remark that this doubling method can be employed to establish interior $C^2$ estimates and regularity for many other nonlinear PDEs. We refer to \cite{Shankar-Yuan-M-A} for the Monge-Amp\`ere equation; \cite{Jiao-Sui, CYFung-quotient} for the Hessian quotient equation; \cite{Shankar-sLag, CYFung-sLag} for the special Lagrangian equation; \cite{AB-RS-JW} for the Lagrangian mean curvature equation; and \cite{Shankar-PAMS} for general fully nonlinear PDEs.

\bigskip
\noindent
{\bf Organization.} This paper is organized as follows. In Section \ref{Section: Preliminaries}, we introduce some preliminaries related to the $\sigma_2$-operator and hypersurfaces geometry. In Section \ref{Section: Almost Jacobi Inequality}, we establish an almost Jacobi inequality, which is then used in Section \ref{Section: Doubling ineq} to derive a doubling inequality under the small gradient assumption. Section \ref{Section: Alexandrov Regularity} is devoted to establishing the Alexandrov-type theorem by deriving an a priori $W^{2,1}$ estimate and a linearly dependent gradient estimate, where the latter estimate initially involves the modulus of continuity of the gradient. In Section \ref{Section: Proof of Curvature Estimates}, we present the proof of Theorem \ref{THM: curvature est}. In Section \ref{Section: Alexandrov Regularity Revisited}, we refine the proof of the Alexandrov-type theorem to remove the dependence on the modulus of continuity of the gradient. Finally, the proof of Theorem \ref{THM: regularity for C^1 solutions} is provided in Section \ref{Section: Regularity}.

\bigskip
\noindent
{\bf Acknowledgments.}  
The authors thank Yu Yuan for conversations on this problem.

\section{Preliminaries}\label{Section: Preliminaries} 
In this section, we introduce the definitions, notation, and lemmas needed for the subsequent sections. 

We first introduce some algebraic lemmas related to the $\sigma_2$ operator, which can be found in \cite{Lin-Trudinger,Shankar-Yuan-annals, Fan-Hessian}.

\begin{lemma}\label{LEMMA: control on min eigenvalue}
    Let $\lambda=(\lambda_1,\dots,\lambda_n) \in \Gamma_2:= \{ \sigma_1(\lambda)>0, \sigma_2(\lambda)>0  \} $ with $\lambda_1\ge\lambda_2\ge\cdots\ge\lambda_n$. Then for $n>2$, the following sharp bound holds:
    \[  \sigma_1> \dfrac{n}{n-2}|\lambda_n|.  \]
\end{lemma}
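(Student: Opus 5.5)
If $\lambda_n \ge 0$ the bound is immediate. Indeed, since $\lambda_1 \ge \lambda_n$ and $\sigma_1 > 0$, we have $\sigma_1 \ge n\lambda_n$. When $\lambda_n > 0$ this gives $\sigma_1 \ge n\lambda_n > \frac{n}{n-2}\lambda_n$, because $n > \frac{n}{n-2}$ for $n > 2$. When $\lambda_n = 0$ it is just $\sigma_1 > 0$. So the real case is $\lambda_n < 0$, and there I will relate $\sigma_1$, $\lambda_n$ and $\sigma_2$ directly.

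Write $\lambda' = (\lambda_1, \dots, \lambda_{n-1})$, $s = \sigma_1(\lambda')$ and $t = -\lambda_n > 0$. Then $\sigma_1 = s - t$, and
\[
\sigma_2(\lambda) = \sigma_2(\lambda') - t s .
\]
By Newton--Maclaurin (equivalently, Cauchy--Schwarz applied to $\lambda'$),
\[
\sigma_2(\lambda') \le \frac{n-2}{2(n-1)}\, s^2 .
\]
Combining this with $\sigma_2(\lambda) > 0$ gives
\[
t s < \frac{n-2}{2(n-1)}\, s^2 .
\]
Since $\sigma_1 > 0$ we have $s > t > 0$, so we may divide by $s$ to get $t < \frac{n-2}{2(n-1)}\, s$. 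Substituting $s = \sigma_1 + t$ yields
\[
t\Big(1 - \frac{n-2}{2(n-1)}\Big) < \frac{n-2}{2(n-1)}\,\sigma_1 ,
\]
that is, $t \cdot \frac{n}{2(n-1)} < \frac{n-2}{2(n-1)}\,\sigma_1$. This gives $\sigma_1 > \frac{n}{n-2}\, t = \frac{n}{n-2}|\lambda_n|$, which is exactly the claim.

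For sharpness, take $\lambda' = (1, \dots, 1)$ and let $\lambda_n \to -\frac{n-2}{2}$. Then $\sigma_2 \to 0^+$ and $\sigma_1 / |\lambda_n| \to \frac{n}{n-2}$. The only points needing care are the strictness of the inequalities and the division by $s$, which is justified since $s > t > 0$. I do not expect a genuine obstacle here; the one step to verify carefully is the Newton--Maclaurin bound for $\sigma_2(\lambda')$, which holds for arbitrary real vectors via $s^2 = |\lambda'|^2 + 2\sigma_2(\lambda')$ together with $|\lambda'|^2 \ge s^2/(n-1)$.
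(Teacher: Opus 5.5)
Your treatment of the main case $\lambda_n<0$ is correct. Splitting off $\lambda_n$, writing $\sigma_2(\lambda)=\sigma_2(\lambda')-ts$, and bounding $\sigma_2(\lambda')\le \frac{n-2}{2(n-1)}s^2$ by Cauchy--Schwarz is the standard argument; the paper gives no proof of its own and only cites Lin--Trudinger, Shankar--Yuan and Fan. Your sharpness example is also right.

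The step that fails is in the case you called trivial. The claim that $n>\frac{n}{n-2}$ for all $n>2$ is false at $n=3$, where both sides equal $3$. This cannot be repaired, because the strict inequality itself fails there: $\lambda=(1,1,1)\in\Gamma_2$ has $\sigma_1=3=\frac{3}{3-2}|\lambda_3|$. What your argument actually proves, and what is true, is the following:
\begin{itemize}
\item the strict bound holds for every $n\ge 4$;
\item for $n=3$ it holds whenever $\lambda_n\le 0$;
\item for $n=3$ and $\lambda_n>0$ one only has $\sigma_1\ge 3\lambda_3$, with equality exactly when $\lambda=c(1,1,1)$, $c>0$.
\end{itemize}
The paper only uses the lemma with $n=4$, so nothing downstream is affected. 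You should still state the restriction to $n\ge 4$ (or the non-strict version for $n=3$) instead of asserting an inequality that is false there.
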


\begin{lemma}\label{LEMMA: est on linearized operator}
    Let $\lambda=(\lambda_1,\dots,\lambda_n) \in \Gamma_2 $ with $\lambda_1\ge\lambda_2\ge\cdots\ge \lambda_n$. Then we have
    \begin{equation}
        \dfrac{\sigma_2}{\sigma_1}\leq \sigma_1-\lambda_1\leq \dfrac{n-1}{n}\sigma_1,
    \end{equation}
    and for any $i\ge 2$,
    \begin{equation}
        \left(1-\dfrac{1}{\sqrt{2}}\right)\sigma_1\leq \sigma_1-\lambda_i\leq \dfrac{2n-2}{n}\sigma_1.
    \end{equation}
\end{lemma}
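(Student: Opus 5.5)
The first chain of inequalities follows from the identity for the partial derivative. Write $\sigma_1-\lambda_1=\sigma_1(\lambda|1)$, the first elementary symmetric function of $\lambda$ with $\lambda_1$ removed. Expanding $\sigma_2$ along $\lambda_1$ gives
\[ \sigma_2=\lambda_1\sigma_1(\lambda|1)+\sigma_2(\lambda|1). \]
Since $\lambda\in\Gamma_2$, a standard fact is that $\sigma_1(\lambda|i)>0$ for every $i$. Using Newton–Maclaurin on the $(n-1)$-tuple only requires $\sigma_2(\lambda|1)\le \frac{n-2}{2(n-1)}\sigma_1(\lambda|1)^2$, which holds for any real vector. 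We then get
\[ \sigma_2\le \sigma_1(\lambda|1)\bigl(\lambda_1+\sigma_1(\lambda|1)\bigr)=\sigma_1(\lambda|1)\,\sigma_1 \]
provided $\sigma_2(\lambda|1)\le \sigma_1(\lambda|1)^2$, which is immediate from the previous bound. This yields $\sigma_2/\sigma_1\le\sigma_1-\lambda_1$. The upper bound $\sigma_1-\lambda_1\le\frac{n-1}{n}\sigma_1$ is equivalent to $\lambda_1\ge\sigma_1/n$, i.e. the largest entry is at least the average, which is trivial.

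For $i\ge2$, the upper bound $\sigma_1-\lambda_i\le\frac{2n-2}{n}\sigma_1$ is equivalent to $-\lambda_i\le\frac{n-2}{n}\sigma_1$. This is trivial when $\lambda_i\ge0$. Otherwise $-\lambda_i\le|\lambda_n|<\frac{n-2}{n}\sigma_1$ by Lemma \ref{LEMMA: control on min eigenvalue}.

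The lower bound $(1-\frac1{\sqrt2})\sigma_1\le\sigma_1-\lambda_i$ is the main step. It suffices to treat $i=2$, since $\sigma_1-\lambda_i$ increases with $i$. We need $\lambda_2\le\frac{1}{\sqrt2}\sigma_1$. Assume $\lambda_2>0$, otherwise it is trivial. Set $s=\sum_{j\ge3}\lambda_j$, so $\sigma_1=\lambda_1+\lambda_2+s$. Using $\sum_{j\ge3}\lambda_j^2\ge0$ we get
\[ \sigma_2=\tfrac12\Bigl(\sigma_1^2-\sum\lambda_j^2\Bigr)\le \tfrac12\bigl(\sigma_1^2-\lambda_1^2-\lambda_2^2\bigr). \]
Positivity of $\sigma_2$ then forces $\lambda_1^2+\lambda_2^2<\sigma_1^2$. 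Since $\lambda_1\ge\lambda_2>0$, we have $2\lambda_2^2\le\lambda_1^2+\lambda_2^2<\sigma_1^2$, giving $\lambda_2<\sigma_1/\sqrt2$. This is exactly the claimed bound. The only delicate points are the sign facts $\sigma_1(\lambda|i)>0$ in $\Gamma_2$ and that $\sigma_1>0$, which justifies taking square roots.
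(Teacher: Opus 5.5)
Your proof is correct. The paper gives no argument of its own for this lemma and simply cites \cite{Lin-Trudinger, Shankar-Yuan-annals, Fan-Hessian}, and your route is the standard elementary one: expand $\sigma_2=\lambda_1\sigma_1(\lambda|1)+\sigma_2(\lambda|1)$ and use $\sigma_2(\lambda|1)\le\sigma_1(\lambda|1)^2$, apply Lemma \ref{LEMMA: control on min eigenvalue} for the upper bound when $i\ge 2$, and use $\sigma_1^2>|\lambda|^2\ge 2\lambda_2^2$ for the lower bound.

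Two cosmetic remarks. The positivity $\sigma_1(\lambda|i)>0$ that you flag is never actually used. For $n=2$, where Lemma \ref{LEMMA: control on min eigenvalue} does not apply, the upper bound falls under your trivial case, since $\Gamma_2$ is then the positive cone.
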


Next, we introduce some notation from hypersurface geometry.
\begin{definition}
    A $C^2$ graph $\Sigma=(x,u(x))$ is called $2$-convex, or admissible, if at every point, its principal curvatures satisfy
    \begin{equation}\label{eq: 2.3}
        \kappa=(\kappa_1,\dots, \kappa_n)\in\Gamma_2.
    \end{equation}
    This definition can be extended to continuous graphs if $\eqref{eq: 2.3}$ holds in the viscosity sense.
\end{definition}

Suppose that a smooth 2-convex graph $\Sigma=(x,u(x))\subset \bb{R}^{n+1}$ satisfies the scalar curvature equation 
\[   \sigma_2(\kappa)=\sum_{i<j}\kappa_i\kappa_j=1.  \]
Let $X=(x,u(x))$ be the position vector on $\Sigma$, and  denote the outer (downward) unit normal of $\Sigma$ by $\nu=\frac{1}{W}(Du,-1) $, where $W=\sqrt{1+|Du|^2}$.

Let $\{E_{1},\dots, E_{n},E_{n+1}\}$ be the standard orthonormal coordinate of $\bb{R}^{n+1}$. Sometimes we may choose an orthonormal frame $\{e_1,e_2,\dots, e_n, \nu\}$ in $\bb{R}^{n+1}$ such that $\{e_1,e_2,\dots,e_n\}$ are tangent to $\Sigma$ and $\nu$ is the unit normal on $\Sigma$. We use $\nabla$ to denote the Levi-Civita connection on $\Sigma$. For any smooth function $f$ on $\Sigma$, we denote $f_i= \nabla_{e_i}f$ and $f_{ij}=\nabla^2f(e_i,e_j)$.  The following fundamental equations for hypersurfaces in $\bb{R}^{n+1}$ are well known:
\begin{align*}
    X_{i}&=e_i,\quad X_{ij}=-h_{ij}\nu \qquad \text{(Gauss formula)}\\
    \nu_i&=h_{ij}e_j \qquad \text{(Weingarten equation)}\\
    h_{ijk}&=h_{ikj}\qquad \text{(Codazzi equation)}\\
    R_{ijkl}&= h_{ik}h_{jl}-h_{il}h_{jk}\qquad \text{(Gauss equation)}
\end{align*}
where $h_{ij}$ is the second fundamental form of $\Sigma$, $h_{ijk}=\nabla_{e_k}h_{ij},$ and $R_{ijkl}$ is the curvature tensor of $\Sigma$. We also have the following commutator formula:
\[ h_{ijkl}-h_{ijlk}=\sum_{m}h_{im}R_{mjkl}+\sum_{m}h_{mj}R_{mikl}. \]
Combining this with the Gauss and Codazzi equations, we have
\begin{equation}\label{eq: commutator formula}
    h_{iikk}=h_{kkii}+\sum_{m}(h_{mi}^2h_{kk}-h_{mk}^2h_{ii}).
\end{equation}

In this orthonormal frame, the scalar curvature equation becomes 
\[  \sigma_{2}(\kappa)=\sigma_{2}(\lambda(h_{ij}))= \dfrac{1}{2}\left[ \left( \sum_{i}h_{ii} \right)^2-\sum_{i,j}h_{ij}^2 \right]=\dfrac{1}{2}\left( H^2 -|A|^2 \right)=1, \]
where $H$ and $|A|$ denote the mean curvature and the norm of the second fundamental form of $\Sigma$, respectively. Taking the covariant derivative with respect to $e_k$, we get the linearized equation:
\begin{equation}\label{eq: linearized eq}
    F^{ij}h_{ijk}=0,
\end{equation}
where
\begin{equation}
    F^{ij}:=\dfrac{\partial \sigma_2}{\partial h_{ij}}=H\delta_{ij}-h_{ij}.
\end{equation}
By Lemma \ref{LEMMA: est on linearized operator}, the coefficient matrix $(F^{ij})$ is positive definite, provided $\kappa=\lambda(h_{ij})\in \Gamma_2$, i.e., $\Sigma$ is 2-convex.

\section{Almost Jacobi inequality}\label{Section: Almost Jacobi Inequality}

In this section, we establish the almost Jacobi inequality for the quantity $b=\log\sigma_{1}=\log H$. We denote
\[ \Delta_{F}b:= F^{ij}b_{ij}\quad\text{and} \quad |\nabla_{F}b|^2:= F^{ij}b_i b_j. \]

\begin{proposition}
    Let $\Sigma=(x,u(x))$ be a smooth $2$-convex graph satisfying the scalar curvature equation \eqref{eq: scalar curvature eq} on $B_{1}\subset \bb{R}^4$, and let $b=\log H$, where $H=\sigma_1(\kappa)$ is the mean curvature of $\Sigma$. Then we have the following almost Jacobi inequality:
    \begin{equation}\label{eq: almost Jacobi}
        \Delta_{F}b\ge \varepsilon |\nabla_{F}b|^2 - \sum_{i,j,k}F^{ij}h_{ik}h_{jk},
    \end{equation}
    where
    \begin{equation*}
        \varepsilon= \dfrac{2}{9}\left( \dfrac{1}{2} + \dfrac{\kappa_{\mathrm{min}}}{H} \right)>0.
    \end{equation*}
    Here, $\kappa_{\mathrm{min}}$ is the minimum principal curvature of $\Sigma$.
\end{proposition}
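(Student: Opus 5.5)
We work at a point $p\in\Sigma$ in an orthonormal frame that diagonalizes $h_{ij}$ at $p$, with $h_{ij}=\kappa_i\delta_{ij}$ and $\kappa_1\ge\cdots\ge\kappa_4$, so that $F^{ij}=f_i\delta_{ij}$ with $f_i=H-\kappa_i>0$. The proof has two parts: compute $\Delta_F H$ using the commutator formula \eqref{eq: commutator formula}, then reduce to the purely algebraic inequality that Shankar--Yuan proved for the Hessian equation in dimension four. The only new feature is the curvature term produced by commuting derivatives.

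\textbf{Step 1: computing $\Delta_F H$.} Write $\Delta_F H=\sum_{i,k}f_i h_{kkii}$ and apply \eqref{eq: commutator formula} in the form $h_{kkii}=h_{iikk}+\sum_m(h_{mk}^2h_{ii}-h_{mi}^2h_{kk})$. At $p$ this gives
\[
\Delta_F H=\sum_k\sum_i f_i h_{iikk}+\sum_{i,k} f_i\left(\kappa_k^2\kappa_i-\kappa_i^2\kappa_k\right).
\]
For the first sum, differentiate the linearized equation \eqref{eq: linearized eq} once more, using $\partial F^{ij}/\partial h_{rs}=\delta_{ij}\delta_{rs}-\delta_{ir}\delta_{js}$. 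This yields
\[
\sum_i f_i h_{iikk}=-H_k^2+\sum_{i,j}h_{ijk}^2 .
\]
Summing over $k$ shows that the first sum equals $|\nabla A|^2-|\nabla H|^2$, which is exactly the Hessian-case expression. The second sum simplifies to $H|A|^2\cdot H-\dots$; precisely,
\[
\sum_{i,k} f_i\kappa_i\kappa_k(\kappa_k-\kappa_i)=H\sum_{i,k}\kappa_i\kappa_k(\kappa_k-\kappa_i)-\sum_{i,k}\kappa_i^2\kappa_k(\kappa_k-\kappa_i).
\]
The first term vanishes by antisymmetry. The second term equals $-(|A|^4-H\sum_i\kappa_i^3)$.

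We want to compare this with $-\sum_{i,j,k} F^{ij}h_{ik}h_{jk}=-\sum_i f_i\kappa_i^2=-(H|A|^2-\sum_i\kappa_i^3)$. Using $|A|^2=H^2-2$, one checks that
\[
|A|^4-H\sum_i\kappa_i^3-\left(H^2|A|^2-H\sum_i\kappa_i^3\right)=|A|^2\left(|A|^2-H^2\right)=-2|A|^2.
\]
Hence the curvature term equals $H\cdot\left(-\sum_i f_i\kappa_i^2\right)+2|A|^2\ge -H\sum_i f_i\kappa_i^2$. After dividing by $H$ in Step 2, this is bounded below by $-\sum F^{ij}h_{ik}h_{jk}$. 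I expect the bookkeeping here to need rechecking, but the sign structure should come out this way.

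\textbf{Step 2: reduction to the Hessian case.} Since $\Delta_F b=\Delta_F H/H-|\nabla_F H|^2/H^2$, it suffices to prove
\[
\sum_{i,j,k}h_{ijk}^2-\sum_k H_k^2\;\ge\;(1+\varepsilon)\sum_i f_i\frac{H_i^2}{H}.
\]
This must hold subject to the constraints $\sum_i f_i h_{iik}=0$ from \eqref{eq: linearized eq} and the full symmetry of $h_{ijk}$ from Codazzi. This is exactly the algebraic quadratic-form inequality that Shankar--Yuan established for $\sigma_2(D^2u)=1$ in dimension four, with third derivatives $u_{ijk}$ replaced by $h_{ijk}$. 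The only inputs are full symmetry, the constraint, and $\kappa\in\Gamma_2$ with $\sigma_2=1$, and all of these hold here.

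\textbf{Step 3: the main obstacle.} The main obstacle is verifying that quadratic-form inequality with the constant $\varepsilon=\frac29\left(\frac12+\kappa_{\min}/H\right)$. The plan is as follows. Fix $k$ and drop the nonnegative terms $h_{ijk}^2$ with $i,j,k$ distinct. Keep the terms $h_{iik}^2$ and $2h_{kki}^2$ for $i\ne k$. Use the constraint to eliminate $h_{kkk}$ in favor of $h_{iik}$ with $i\ne k$, then minimize the resulting quadratic form by Cauchy--Schwarz with weights $f_i$. The admissible $\varepsilon$ then reduces to an eigenvalue inequality in $\kappa$. That inequality is controlled by Lemma~\ref{LEMMA: est on linearized operator} and Lemma~\ref{LEMMA: control on min eigenvalue}; in dimension four the latter gives $\kappa_{\min}/H>-1/2$, which ensures $\varepsilon>0$. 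If the constants do not close directly, I would cite the corresponding inequality from \cite{Shankar-Yuan-annals} verbatim, since it is purely algebraic.
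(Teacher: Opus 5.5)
Your proposal is correct and follows the paper's proof. Both use the same commutator computation, with the curvature term equal to exactly $2|A|^2-H\sum_i f_i\kappa_i^2\ge -H\sum_i f_i\kappa_i^2$; you get this identity by antisymmetry, while the paper uses $\sum_i F^{ii}\kappa_i=2$. Both then reduce to Shankar--Yuan's purely algebraic quadratic-form inequality, which the paper also simply cites. In your Step 3, the retained terms $h_{iik}^2$ and $2h_{kki}^2$ should be regrouped by the index appearing once, i.e.\ by $t_i=(h_{11i},\dots,h_{44i})$, so as to obtain the paper's form $Q_i=3|t_i|^2-2\langle t_i,E_i\rangle^2-(1+\delta F^{ii}/H)\langle t_i,a\rangle^2$ subject to $\sum_j F^{jj}h_{jji}=0$.
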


\begin{proof}
    We prove the almost Jacobi inequality \eqref{eq: almost Jacobi} pointwise. Fix an arbitrary point $p$. By appropriately choosing the tangent frame $\{e_1,\dots,e_4\}$, we may assume that the second fundamental form $(h_{ij})$ is diagonal at $p$ with $h_{ii}=\kappa_i$ and $\kappa_1\ge \cdots\ge \kappa_4$. Under this frame, the linearized operator $(F^{ij})$ is also diagonal at $p$ with $F^{ii}=H-\kappa_i$. Hereafter, all computations are carried out at the point $p$. 

    Computing the derivatives of $b=\log H$, we obtain
    \[ |\nabla_{F}b|^2=\sum_i \dfrac{F^{ii}H_{i}^2}{H^2}\quad \text{and} \quad \Delta_{F}b=\sum_i \dfrac{F^{ii}H_{ii}}{H}-\sum_{i}\dfrac{F^{ii}H_{i}^2}{H^2}. \]
    Differentiating the linearized equation \eqref{eq: linearized eq} with respect to $e_k$ yields
    \begin{equation}
        \sum_{i}F^{ii}h_{iikk}- \sum_{i,j} h_{ijk}^2+ \left(\sum_{i} h_{iik}\right)^2=0
    \end{equation}
    Combining this with the commutator formula \eqref{eq: commutator formula}, we get
    \begin{align*}
        \sum_{i}F^{ii}H_{ii}&=\sum_{i,k}F^{ii}h_{kkii}=\sum_{i,k}F^{ii}(h_{iikk}+\kappa_k^2\kappa_i-\kappa_i^2\kappa_k)\\
        &= \sum_{i,j,k}h_{ijk}^2-\sum_k\left(\sum_i h_{iik} \right)^2 + \sum_{i,k} F^{ii}(\kappa_k^2\kappa_i - \kappa_i^2\kappa_k).
    \end{align*}

    For the last two commutator terms, using $\sum_i F^{ii}\kappa_i= 2$, we estimate
    \[\sum_{i,k} F^{ii}(\kappa_k^2\kappa_i - \kappa_i^2\kappa_k)\ge -H\sum_i F^{ii}\kappa_i^2. \]
    Now, combining the above estimates and setting $\delta := 1+\varepsilon$, we obtain
    \begin{equation}
        \begin{split}
            \Delta_{F}b-\varepsilon|\nabla_{F}b|^2 &\ge \dfrac{1}{H}\left\{    \sum_{i,j,k}h_{ijk}^2 - \sum_{i} \left( 1+\delta \dfrac{F^{ii}}{H} \right) H_{i}^2  \right\} - \sum_{i}F^{ii}\kappa_i^2 \\
            &\ge \dfrac{1}{H}\sum_{i} \left\{   3\sum_{j\neq i} h_{jji}^2 + h_{iii}^2 - \left( 1+\delta\dfrac{F^{ii}}{H} \right)H_{i}^2   \right\} - \sum_i F^{ii}\kappa_i^2\\
            &:= \dfrac{1}{H}\sum_{i}Q_i -\sum_i F^{ii}\kappa_i^2.
        \end{split}
    \end{equation}
    It remains to show that each $Q_i$ is nonnegative, which will complete the proof of the almost Jacobi inequality \eqref{eq: almost Jacobi}. Fix any $i\in\{ 1,2,3, 4 \}$. Let $t=t_i=(h_{11i}, \cdots, h_{44i})$, let $E_i$ be the standard $i$-th basis of $\bb{R}^4$, and set $a=(1,1,1,1)$. We view $Q_i$ as a quadratic form in $t$:
    \begin{equation*}
        Q_i = 3|t|^2 - 2\innerproduct{t,E_i}^2- \left(1+ \delta\dfrac{F^{ii}}{H}\right)\innerproduct{t,a}^2
    \end{equation*}
    The proof of the nonnegativity of $Q_i$ is identical to that in \cite[pp. 495-498]{Shankar-Yuan-annals}.

\end{proof}

\section{Doubling inequality under a small gradient assumption}\label{Section: Doubling ineq}

   In this section, we use the almost Jacobi inequality, combined with Guan-Qiu's test function in \cite{Guan-Qiu,Qiu-Hessian-24}, to establish an a priori doubling inequality. 
   
   Unlike Hessian equations, the curvature equation has a more complicated structure, which introduces an extra negative term in our maximum principle argument that does not appear in the Hessian equation case treated in \cite{Shankar-Yuan-annals}. Therefore, we need to impose an additional assumption that the gradient is sufficiently small to handle this bad term.  

   Furthermore, to facilitate our subsequent applications involving rescaling, we state the proposition for equations with an arbitrary constant right-hand side.

  \begin{proposition}\label{PROP: doubling}
      Let $\Sigma=(x,u(x))$ be a smooth $2$-convex graph over $B_1\subset \bb{R}^4$ satisfying the scalar curvature equation
      \begin{equation}
          \sigma_{2}(\kappa) = f_0 \quad \text{on}\quad B_3,
      \end{equation}
      for some positive constant $f_0$. There exists a small dimensional constant $\mu=\mu(4)<1$, such that if 
      \begin{equation}
          |Du|\leq \mu \quad \text{in}\quad B_3,
      \end{equation}
      then the following doubling  inequality holds: 
      \begin{equation}
          \sup_{B_2} H \leq C\left( 4, \|u\|_{L^{\infty}(B_3)}, \sqrt{f_0}, f_{0}^{-1} \right) \sup_{B_1}H.
      \end{equation}
  \end{proposition}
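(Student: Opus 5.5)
We follow the Shankar–Yuan adaptation of Qiu's maximum principle argument, with Guan–Qiu's test function. Write $b=\log H$. After subtracting a constant we may assume $u(0)=0$; by the small gradient assumption $\|u\|_{L^\infty(B_3)}$ then stays controlled. Consider on $\Sigma$ over $\overline{B_2}$ (or a slightly larger ball where the cutoff vanishes on the boundary) the test function
\[
P = 2\log \rho(x) + \alpha\,(x\cdot Du-u) + \beta\,\frac{|x|^2}{2} + b,
\]
where $\rho(x)=4-|x|^2$ (or a similar cutoff vanishing on $\partial B_2$ and comparable to constants on $B_1$). The constants $\alpha,\beta>0$ are chosen depending on $f_0$ and $\|u\|_{L^\infty}$. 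The goal is to show that at an interior maximum point $p$ one has either $H(p)\le C$ or $p$ lies in $B_1$. Either way, $\sup_{B_2} H \le C\sup_{B_1}H$. Since $H\ge \sqrt{2f_0\cdot 4/3}$ is bounded below by Maclaurin, an absolute bound $H(p)\le C$ is also of the form $C\sup_{B_1}H$.

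\textbf{Step 1: compute $\Delta_F$ of each piece.} In terms of the hypersurface, $x\cdot Du - u = -W\langle X,\nu\rangle$, i.e.\ the support function times $W$. Using the Weingarten and Gauss formulas,
\[
\Delta_F \langle X,\nu\rangle = \sum F^{ii}\kappa_i + \langle X,\nu_k\rangle\text{-terms} - \langle X,\nu\rangle \sum F^{ii}\kappa_i^2 .
\]
Since $\sum F^{ii}\kappa_i=2f_0$, the good term is $-\langle X,\nu\rangle\sum F^{ii}\kappa_i^2$, weighted by the sign of the support function. For $|x|^2/2$, the Gauss formula gives $\Delta_F(|X|^2/2)$-type quantities $\sum F^{ii} - u\nu$-terms, and $\sum F^{ii}=3H$ is the main positive term. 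The extra factor $W$ in $x\cdot Du-u$ (equivalently, differentiating $x$ rather than $X$) produces terms involving $h_{ij}$ multiplied by $Du$. This is the bad term mentioned in the introduction, of size roughly $|Du|\,|x|\sum F^{ii}\kappa_i^2$, or $|Du|H^2$-type. The small gradient assumption is used exactly here. Choosing $\mu$ small relative to the good term $\alpha\sum F^{ii}\kappa_i^2$ from the support function lets us absorb it. Here $W\approx1$ and $-\langle X,\nu\rangle\approx x\cdot Du-u$ up to $O(\mu)$ errors.

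\textbf{Step 2: use the almost Jacobi inequality.} By \eqref{eq: almost Jacobi}, $\Delta_F b\ge \varepsilon|\nabla_F b|^2-\sum F^{ii}\kappa_i^2$. The commutator term $-\sum F^{ii}\kappa_i^2$ is absorbed by taking $\alpha$ large enough that $\alpha\sum F^{ii}\kappa_i^2$ dominates it. At the maximum, $\nabla P=0$ gives $\nabla b = -2\nabla\rho/\rho - \alpha\nabla(x\cdot Du-u)-\beta x$, and $\Delta_F P\le 0$. The cutoff terms produce $-C\sum F^{ii}/\rho - C|\nabla_F\rho|^2/\rho^2$. Following Shankar–Yuan, split into two cases.

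\emph{Case A:} $\varepsilon(p)$ is not small, i.e.\ $\kappa_{\min}/H \ge -1/2+\eta$. Then $\varepsilon|\nabla_F b|^2$ controls $|\nabla_F\rho|^2/\rho^2$ through the critical point equation, as in Qiu's argument.

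\emph{Case B:} $\varepsilon(p)$ is small. Then $\kappa_{\min}\approx -H/2$, and in $\mathbb R^4$ with $\sigma_2=f_0$ this forces a specific eigenvalue configuration in which $F^{44}=H-\kappa_4\approx \tfrac32 H$ is large. In this case $\sum F^{ii}\kappa_i^2\gtrsim H^3$, so the good term $\alpha\sum F^{ii}\kappa_i^2$, with coefficient comparable to $\alpha$, dominates $\sum F^{ii}/\rho\sim H/\rho$ and the gradient terms, which are at most $H\cdot(\text{stuff})/\rho^2$.

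In both cases we get $\rho^2 H\le C$ at $p$. Therefore
\[
\sup_{B_1}H\ \ge\ e^{-C}\, e^{P(p)}\ \ge\ \dots \ \ge\ c\,\sup_{B_2}H
\]
after also using $|x\cdot Du-u|\le C$ and $|x|\le 3$.

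\textbf{Main obstacle.} The hardest step is the bookkeeping of the bad term in Step 1. We need to verify that, once expressed in the frame, every term carrying a factor $Du$ is bounded by $C\mu(\sum F^{ii}\kappa_i^2 + H + |\nabla_F b|\,\text{stuff})$, so that it can be absorbed. The Case B eigenvalue analysis, by contrast, should transfer verbatim from \cite{Shankar-Yuan-annals} since it is purely algebraic. The first approach is to work directly with $\varphi=-\langle X,\nu\rangle$ in place of $x\cdot Du-u$; this is cleaner, and its difference from $x\cdot Du-u$ is $O(\mu)$. If the $\langle X, e_k\rangle h_{kk}$-type terms from $\nabla\varphi$ cause trouble in the critical point equation, we instead pair them with $\nabla b$ via Cauchy–Schwarz, using the $\varepsilon|\nabla_F b|^2$ term in Case A and the large $H^3$ term in Case B.
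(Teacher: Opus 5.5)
Your outline has a genuine gap, and it lies in the test function. You put $b=\log H$ itself into $P$ and aim to show that at an interior maximum either $p\in B_1$ or $\rho^2H\le C$ absolutely. That second alternative is a direct pointwise interior curvature bound. If the maximum principle gave it, Theorem~\ref{THM: curvature est} would follow by rotation and rescaling alone, without Alexandrov regularity, Savin's theorem or compactness. Your computation cannot deliver it.

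Case A already fails as written. Since $\varepsilon=\frac29\bigl(\frac12+\kappa_{\min}/H\bigr)\le\frac16$, the relation $\nabla b\approx-2\nabla\rho/\rho$ lets the Jacobi term give only $4\varepsilon|\nabla_F\rho|^2/\rho^2\le\frac23|\nabla_F\rho|^2/\rho^2$. That falls short of the $2|\nabla_F\rho|^2/\rho^2$ you must absorb.

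A higher power of $\rho$ repairs that bookkeeping but not the real obstruction, which is the paper's Subcase II-2. Take $\kappa\approx(H,\tau,\tau,\tau)$ with $\tau\approx f_0/(3H)$ and $\nabla\rho$ essentially along $e_1$. Then $F^{11}\approx f_0/H$, $\sum_iF^{ii}\kappa_i^2\approx f_0H$, and $\varepsilon F^{11}b_1^2=O(H)$. So every positive term at your disposal has the same order as the cutoff error $\Delta_F\rho/\rho\approx-6H/\rho$. You learn at best $\rho(x_0)\ge c$, not a bound on $H(x_0)$.

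The missing idea is the normalization $\log\max\{\bar b,\gamma\}$ with $\bar b=b-\sup_{B_1}b$. Since
\[
\Delta_F\log\bar b\ \ge\ (\varepsilon\bar b-1)\,\frac{|\nabla_F\bar b|^2}{\bar b^2}-\frac{1}{\bar b}\sum_iF^{ii}\kappa_i^2 ,
\]
the Jacobi gain is multiplied by $\bar b$ and the commutator term becomes negligible. This factor produces $\frac{\gamma}{360}H/\rho^2$ against $CH/\rho^2$ in Subcase II-1, and $\frac{\alpha^2}{C}f_0H\bar b$ in Subcase II-2. The outcome is $\rho^2\bar b\le C$, a bound on $\log\bigl(H/\sup_{B_1}H\bigr)$. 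That is exactly the doubling inequality, not an absolute bound.

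You have also misidentified the source of the good term $\sum_iF^{ii}\kappa_i^2$. With $\nu=(Du,-1)/W$ one has $x\cdot Du-u=W\langle X,\nu\rangle$. The $\kappa^2$-contributions of $\langle X,\nu\rangle\Delta_FW$ and $W\Delta_F\langle X,\nu\rangle$ cancel exactly:
\[
\Delta_F\bigl(W\langle X,\nu\rangle\bigr)=\frac{2}{W}\sum_iF^{ii}W_i\bigl(W\langle X,\nu\rangle\bigr)_i+2Wf_0 .
\]
The cross term here is precisely the sign-indefinite bad term $2W^2\sum_iF^{ii}d_i\langle e_i,E_{n+1}\rangle\kappa_i^2=O(\mu)\sum_iF^{ii}\kappa_i^2$. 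Nothing from this piece can absorb it.

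The other pieces do not help either:
\begin{itemize}
\item The bare support function is $O(\mu)$ after $u(0)=0$ and has no sign. You could give it one only by moving the origin a definite distance off the graph, which you do not do.
\item Taking $\alpha$ large is self-defeating. Through the critical point equation, $\alpha\nabla(x\cdot Du-u)$ feeds back $-C\alpha^2\sum_iF^{ii}\kappa_i^2$.
\item The term $\beta|x|^2/2$ contributes only $O(H)$, which cannot offset $-CH/\rho$ near the boundary.
\end{itemize}

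The paper's remedy is the term $\frac{\beta}{2}W^2$, whose $\Delta_F$ equals $3\beta\sum_iF^{ii}W_i^2+\beta W^2\sum_iF^{ii}\kappa_i^2$. The paper keeps $\alpha$ small: $C\mu\alpha<\beta/4$, $C\alpha^2\le\beta/4$, and $\alpha^2<\beta/(C\gamma)$. It uses $x\cdot Du-u$ only through its gradient $Wd_i\kappa_i$ in Subcase II-2, where one needs $|d_1|\ge\frac14$. That is a second use of the small-gradient assumption, via Claim~\ref{CLAIM: claim2}, and your outline does not address it.
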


    \begin{proof}
        Consider the following test function on $B_3$:
        \[ P=2\log\rho + \alpha\left( x\cdot Du -u \right)  + \dfrac{\beta}{2}W^2 + \log \max \{ \overline{b}, \gamma \},  \]
        where $\rho=9-|x|^2$, $W=\sqrt{1+|Du|^2}$, and $\overline{b}=b-\sup_{B_1}b$ for $b=\log H$. Small constants $\alpha,\beta$ and a large constant $\gamma$ will be chosen later. Denote $\Gamma= \|u\|_{L^{\infty}(B_3)}+2 $. Throughout this proof, $C=C(4)$ denotes the dimensional constant which may change line by line.

        Suppose that $P$ attains its interior maximum at $x_0\in B_3$. Note that $|Du|\leq \mu<1 $ on $B_3$, which implies $1\leq W\leq \sqrt{2}$. If $|x_0|\leq 1$, then at $x_0$ we have,
        \begin{equation}\label{eq: upper bound on P-1}
            P\leq C+ \alpha\Gamma+\beta +\gamma.
        \end{equation}
        Therefore, we may assume that $1<|x_0|<3$. If $\overline{b}(x_0)\leq \gamma$, we again get \eqref{eq: upper bound on P-1}, so we further assume that $\overline{b}(x_0)\ge \gamma$ is sufficiently large.

        By appropriately choosing the tangent frame $\{e_1,\cdots, e_4\}$, we may assume that the second fundamental form $(h_{ij})$ is diagonal at the point $X_0=(x_0, u(x_0))$ with $h_{ii}=\kappa_i$ and $\kappa_1\ge\cdots\ge \kappa_4$. Consequently, the linearized operator $(F^{ij})$ is also diagonal with $F^{ii}=H-\kappa_i$. Hereafter, all computations are carried out at the point $X_0$.

        Recalling that the position vector on $\Sigma$ is $X=(x,u(x))$ and the outer (downward) unit normal to $\Sigma$ is $\nu=\frac{1}{W}(Du,-1)$, we can view the test function $P$ as a function on $\Sigma$ via the following geometric relations:
        \begin{equation*}
            \begin{split}
                \rho= 9-|X|^2+ \innerproduct{X,E_{n+1}}^2,\quad W=-\frac{1}{\innerproduct{\nu, E_{n+1}}}, \quad \text{and}\quad x\cdot Du - u= W\innerproduct{X,\nu}.
            \end{split}
        \end{equation*}
        Here the dimension $n=4$, and $E_{n+1}$ denotes the vertical direction of $\bb{R}^{n+1}$.

        Since $X_0$ is the maximum point, we have at $X_0$
        \begin{equation}\label{eq: DP=0}
            0=P_i= \dfrac{2\rho_i}{\rho} + \alpha\left( W\innerproduct{X,\nu} \right)_i + \beta WW_i + \dfrac{\overline{b}_i}{\overline{b}},
        \end{equation}
        and
        \begin{equation}\label{eq: negative ineq-1}
            \begin{split}
                0\ge \sum_i F^{ii}P_{ii} =&  \,2\dfrac{\sum_{i}F^{ii}\rho_{ii}}{\rho} - \dfrac{\sum_i F^{ii}\rho_{i}^{2}}{\rho^2} + \alpha \sum_{i} F^{ii}\left(W\innerproduct{X,\nu}\right)_{ii} \\
                &+ \beta \sum_{i} F^{ii}W_{i}^2 + \beta W \sum_{i} F^{ii}W_{ii}
                +\dfrac{\sum_{i} F^{ii} \overline{b}_{ii} }{\overline{b}} - \dfrac{\sum_{i} F^{ii}\overline{b}_{i}^{2} }{\overline{b}^2}.
            \end{split}
        \end{equation}

        We first treat the terms involving the cutoff function $\rho$. Computing its covariant derivatives, we have
        \begin{equation*}
            \rho_i = -2 \innerproduct{X,e_i} + 2\innerproduct{X,E_{n+1}}\innerproduct{e_i, E_{n+1}}:= -2a_i,
        \end{equation*}
        where 
        \begin{equation*}
            a_i = \innerproduct{X,e_i} - \innerproduct{X,E_{n+1}}\innerproduct{e_i, E_{n+1}} = \sum_{k=1}^{4}\innerproduct{X, E_k}\innerproduct{e_{i}, E_{k}}= \sum_{k=1}^{4} x_k \innerproduct{e_i, E_k}.
        \end{equation*}
        This implies that $|a_i|\leq C$, and hence
        \begin{equation}\label{eq: est on rho_i}
            \sum_{i} F^{ii}\rho_i^2 \leq C\sum_{i} F^{ii}\leq CH.
        \end{equation}
        Taking the covariant derivative on $\rho$ again and using the Gauss formula and Weingarten equation, we obtain
        \begin{equation*}
            \begin{split}
                \rho_{ii}&= -2 + 2\innerproduct{X,\nu}\kappa_i + 2\innerproduct{e_i, E_{n+1}}^2 - 2\innerproduct{X,E_{n+1}}\innerproduct{\nu, E_{n+1}} \kappa_i\\
            &= -2 + 2\innerproduct{e_i, E_{n+1}}^2 + 2\kappa_i \dfrac{x\cdot Du}{W}.
            \end{split}
        \end{equation*}
        Therefore, using the identity $\sum_{i}F^{ii}\kappa_i=2\sigma_2=2f_0>0$, we conclude that
        \begin{equation}\label{eq: est on rho_ii}
            \sum_{i}F^{ii}\rho_{ii}  \ge -C(H+f_0).
        \end{equation}

    Next, we compute the derivatives of $W$ and $x\cdot Du-u$. By a straightforward computation and using the linearized equation $\sum_i F^{ii}h_{iik}=0$, we have
    \begin{equation}\label{eq: deravaties of W}
        W_i = W^2 \innerproduct{e_i, E_{n+1}} \kappa_i \quad \text{and}\quad \sum_{i} F^{ii}W_{ii} = \dfrac{2}{W} \sum_{i} F^{ii}W_{i}^2 + W\sum_{i} F^{ii}\kappa_i^2.
    \end{equation}
    Furthermore, 
    \begin{equation}\label{eq: deravatives of W<X,nu>-1}
        (W\innerproduct{X,\nu})_{i} = W\left(  W\innerproduct{e_i, E_{n+1}}\innerproduct{X,\nu} + \innerproduct{X,e_i}  \right) \kappa_i, 
    \end{equation}
    and
    \begin{equation}\label{eq: deravatives of W<X,nu>-2}
         \sum_{i} F^{ii}(W\innerproduct{X,\nu})_{ii}> \dfrac{2}{W}\sum_{i}F^{ii}W_i(W\innerproduct{X,\nu})_i.
    \end{equation}
    Denote $d_i= W\innerproduct{e_i, E_{n+1}}\innerproduct{X,\nu} + \innerproduct{X,e_i}$. The following two claims establish key relations between $a_i$ and $b_i$. We postpone their proofs until the end of this section to avoid interrupting the main argument.

    \begin{claim}\label{CLAIM: claim1}
        There holds $\sum_{i=1}^{4} a_{i}^2 = |x|^2 - \left(\frac{x\cdot Du}{W}\right)^2 $. 
    \end{claim}
    From this claim,  we deduce that $\frac{|x|^2}{2}\leq \sum_i a_i^2\leq |x|^2 $. Thus at the point $x_0$, we have $\frac{1}{2}\leq \sum_i a_i^2\leq 9$.

    \begin{claim}\label{CLAIM: claim2}
        For any $i=1,2,3,4$, we have the following relation:
        \begin{equation}
            d_i = a_i + W^2 \sum_{k=1}^{4} a_k \innerproduct{e_k, E_{n+1}}\innerproduct{e_i, E_{n+1}}. 
        \end{equation}
    \end{claim}
    Note that each $|\innerproduct{e_i, E_{n+1}}|\leq  |E_{n+1}^{\top}|=\sqrt{1- \innerproduct{\nu, E_{n+1}}^2}= \frac{|Du|}{W}\leq \frac{\mu}{W}$, thus Claim \ref{CLAIM: claim2} gives a rough bound on $d_i$:
    \[  |d_i|\leq |a_i|+\mu^2 \sum_{k}|a_k|\leq C.  \]

    Inserting \eqref{eq: est on rho_i}, \eqref{eq: est on rho_ii}, \eqref{eq: deravaties of W}, \eqref{eq: deravatives of W<X,nu>-1} and \eqref{eq: deravatives of W<X,nu>-2} into \eqref{eq: negative ineq-1}, and combining with the almost Jacobi inequality:
    \begin{equation*}
        \sum_{i}F^{ii}\overline{b}_{ii}\ge \dfrac{2}{9}\left( \dfrac{\kappa_4}{H} + \dfrac{1}{2} \right) \sum_{i} F^{ii} \overline{b}_{i}^2 - \sum_{i} F^{ii}\kappa_{i}^2,
    \end{equation*}
    we obtain
    \begin{equation*}
        \begin{split}
            0\ge &-C\dfrac{H+f_0}{\rho^2} + \boxed{ 2\alpha W^2 \sum_{i} F^{ii} d_{i} \innerproduct{e_i, E_{n+1}}\kappa_i^2} + 3\beta\sum_{i} F^{ii}W_{i}^2 \\
            &+ \left( \beta- \dfrac{1}{\overline{b}} \right)\sum_{i} F^{ii}\kappa_{i}^2 + \dfrac{2}{9}\left( \dfrac{\kappa_4}{H} +\dfrac{1}{2}\right) \sum_{i} F^{ii}\dfrac{\overline{b}_i^2}{\overline{b}} - \sum_{i} F^{ii}\dfrac{\overline{b}_i^2}{\overline{b}^2}.
        \end{split}
    \end{equation*}
    The boxed term is a bad term which does not have a definite sign. To control this term, we impose the condition that $|\innerproduct{e_i, E_{n+1}}|\leq \frac{|Du|}{W}\leq \mu$ to be sufficiently small. Using this smallness assumption on the gradient, the above inequality becomes
    \begin{equation}\label{eq: negative ineq-2}
        \begin{split}
            0\ge &-C\dfrac{H+f_0}{\rho^2} + 3\beta\sum_{i}F^{ii}W_{i}^2 + \left( \beta-C\mu\alpha-\dfrac{1}{\overline{b}} \right) \sum_{i} F^{ii}\kappa_{i}^2 \\
            &+ \dfrac{2}{9}\left( \dfrac{\kappa_4}{H} + \dfrac{1}{2} \right)\sum_{i} F^{ii}\dfrac{\overline{b}_{i}^{2}}{\overline{b}}  -  \sum_{i} F^{ii} \dfrac{\overline{b}_{i}^{2}}{\overline{b}^2}.
        \end{split}
    \end{equation}
    If $1/\overline{b} \ge \beta /4 $, we immediately obtain the desired bound on $\overline{b}$. Therefore, we may assume that $1/\overline{b}< \beta/4$. We further require
    \begin{equation}\label{eq: condition on parameters-1}
        C\mu\alpha < \dfrac{\beta}{4}.
    \end{equation}
    Substituting these conditions into \eqref{eq: negative ineq-2}, we get
    \begin{equation}
        0\ge -C\dfrac{H+f_0}{\rho^2} + 3\beta\sum_{i}F^{ii}W_{i}^2 + \dfrac{\beta}{2}\sum_{i}F^{ii}\kappa_{i}^2 + \dfrac{2}{9}\left( \dfrac{\kappa_4}{H} + \dfrac{1}{2} \right) \sum_i F^{ii}\dfrac{\overline{b}_i^2}{\overline{b}}- \sum_{i}F^{ii}\dfrac{\overline{b}_{i}^{2}}{\overline{b}^2}.
    \end{equation}
    If the nonnegative coefficient $\frac{2}{9}\left(\frac{\kappa_4}{H}+\frac{1}{2}\right)$ has a positive lower bound, the almost Jacobi inequality now becomes the full Jacobi inequality, and we may follow Guan-Qiu's argument in \cite{Guan-Qiu} with slight modification to derive the desired estimates. In the other case, $\kappa_4/H$ is close to $-1/2$, so the almost Jacobi inequality degenerates. As compensation, now each $|\kappa_{i}|$ is comparable with $H$, which means that the equation becomes conformally uniformly elliptic. Thus the term $\sum_{i}F^{ii}\kappa_i^2$ provides the required positive contribution.

    \vspace{0.3cm}
    \emph{Case 1. Degenerate Case: $-1/2\leq \kappa_4/H\leq -1/4$.}

    From the $DP=0$ equation \eqref{eq: DP=0}, we obtain
    \begin{equation*}
        \left( \dfrac{\overline{b}_i}{\overline{b}} \right)^2 = \left( \dfrac{2\rho_i}{\rho} + \alpha W d_i \kappa_i + \beta WW_i \right)^2 \leq \dfrac{C}{\rho^2} + C\alpha^2\kappa_i^2 + 6\beta^2 W_{i}^2.
    \end{equation*}
    Substituting this into \eqref{eq: negative ineq-2}, we get
    \begin{equation*}
        0\ge -C\dfrac{H+f_0}{\rho^2} + (3\beta-6\beta^2) \sum_{i}F^{ii}W_{i}^2 + \left( \dfrac{\beta}{2} -C\alpha^2 \right) \sum_{i}F^{ii}\kappa_{i}^2.
    \end{equation*}
    We choose
    \begin{equation}\label{eq: condition on parameters-2}
        \beta\leq 1/2 \quad \text{and}\quad C\alpha^2\leq \dfrac{\beta}{4},
    \end{equation}
    and moreover, note that $F^{44}=H-\kappa_4\ge H$. The above inequality implies the following estimate:
    \begin{equation*}
        \rho^2 \overline{b} \leq \rho^2 H^2\leq \dfrac{C}{\beta} \left( 1+ \dfrac{f_0}{H} \right)\leq \dfrac{C}{\beta} (1+\sqrt{f_0}),
    \end{equation*}
    where we used $H=\sqrt{2f_0+|A^2|}\ge \sqrt{2f_0}$ in the last step. This gives an upper bound on $P$ at $x_0$:
    \begin{equation}\label{eq: upper bound on P-2}
        P\leq C\Gamma + \log \beta^{-1}+ \log\left(1+\sqrt{f_0}\right). 
    \end{equation}

    \vspace{0.3cm}
    \emph{Case II. Non-degenerate Case: $\kappa_4/H\ge -1/4$.}
    
    In this case, we choose $\overline{b}\ge \gamma > 36$, so \eqref{eq: negative ineq-2} reduces to
    \begin{equation}\label{eq: negative ineq-3}
        0\ge -C\dfrac{H+f_0}{\rho^2} +3\beta \sum_{i}F^{ii}W_{i}^2 + \dfrac{\beta}{2} \sum_{i} F^{ii}\kappa_i^2 + \dfrac{\overline{b}}{36}\sum_{i}F^{ii}\dfrac{\overline{b}_i^2}{\overline{b}^2}.
    \end{equation}

    \vspace{0.3cm}
    \emph{Subcase II-1: $a_{k}^2\ge 1/8$ for some $k\ge2$.}

    Using the $DP=0$ equation \eqref{eq: DP=0} again and recalling that $\rho_i=-2a_i$, we know that
    \begin{equation*}
        \left( \dfrac{\overline{b}_{k}^2}{\overline{b}} \right)^2 = \left(  \dfrac{-4a_k}{\rho} +\alpha W d_k\kappa_k +\beta WW_{k}  \right)\ge \dfrac{8a_k^2}{\rho^2} - C\alpha^2\kappa_k^2 - C\beta^2 W_{k}^2.
    \end{equation*}
    Inserting it into \eqref{eq: negative ineq-3} and recalling from Lemma \ref{LEMMA: est on linearized operator} that $F^{kk}=H-\kappa_k \ge \left(1- \frac{1}{\sqrt{2}}\right)H\ge H/10 $, we conclude that
    \begin{align*}
        0&\ge -C\dfrac{H+f_0}{\rho^2} + 3\beta F^{kk}W_{k}^2 + \dfrac{\beta}{2}F^{kk}\kappa_k^2 + \dfrac{\gamma}{36}F^{kk}\left( \dfrac{1}{\rho^2}-C\alpha^2\kappa_k^2-C\beta^2\kappa_k^2 W_k^2 \right)\\
        &\ge \dfrac{H}{\rho^2}\left( \dfrac{\gamma}{360} - C- \dfrac{C\rho f_0}{H} \right) + \beta \left( 3 - \dfrac{C\gamma}{36}\beta \right) F^{kk}W_{k}^2 + \left(\dfrac{\beta}{4}- \dfrac{C\gamma}{36}\alpha^2\right)F^{kk}\kappa_k^2.
    \end{align*}
    By choosing 
    \begin{equation}\label{eq: condition on parameters-3}
        \gamma\ge C(1+\sqrt{f_0})\quad  \text{for}\ C=C(4)\ \text{sufficiently large,} 
    \end{equation}
    and
    \begin{equation}\label{eq: condition on parameters-4}
        \beta< \dfrac{1}{C\gamma},\qquad \alpha^2< \dfrac{1}{C\gamma}\beta,
    \end{equation}
    we obtain a contradiction.
    
    \vspace{0.3cm}
    \emph{Subcase II-2: $a_1^2\ge1/8$.}
    From Claim \ref{CLAIM: claim2}, we know that
    \begin{equation*}
        |d_1|= \left| a_1 + W^2 \sum_{k=1}^{4} a_{k} \innerproduct{e_k,E_{n+1}}\innerproduct{e_{1}, E_{n+1}} \right| \ge |a_1| - \mu^2\sum_{k=1}^{4}|a_k|\ge \dfrac{1}{2\sqrt{2}} - 12\mu^2.
    \end{equation*}
    By taking $\mu$ sufficiently small, we obtain $|d_1|\ge 1/4$. The $DP=0$ equation \eqref{eq: DP=0} then implies that
    \begin{equation*}
        \begin{split}
            \left(\dfrac{\overline{b}_1}{\overline{b}}\right)^2 &= \left( \dfrac{2\rho_1}{\rho} + \alpha W d_1\kappa_1 +\beta W^3\innerproduct{e_1,E_{n+1}}\kappa_1 \right)^2 \\
            &\ge \dfrac{1}{2}W^2\kappa_1^2\left( \alpha d_1 + \beta W^2\innerproduct{e_1, E_{n+1}} \right)^2 - \dfrac{C}{\rho^2}.
        \end{split}
    \end{equation*}
    We further require 
    \begin{equation}\label{eq: condition on parameters-5}
        \dfrac{\alpha}{8}\ge 2\beta,
    \end{equation}
    which gives
    \begin{equation*}
         \left(\dfrac{\overline{b}_1}{\overline{b}}\right)^2\ge \dfrac{\alpha^2}{128}\kappa_1^2-\dfrac{C}{\rho^2}.
    \end{equation*}
    If $C/\rho^2 \ge \alpha^2\kappa_1^2/256$, then $\rho^2\overline{b}\leq \rho^2H^2\leq C\rho^2\kappa_1^2\leq C\alpha^{-2}$, which yields a desired estimate. Therefore, we may assume that $C/\rho^2\leq \alpha^2\kappa_1^2/256$, hence $\overline{b}_1^2/\overline{b}^2 \ge \alpha^2 H^2/C. $ Since $F^{11}=H-\kappa_1\ge f_0/H$ by Lemma \ref{LEMMA: est on linearized operator}, we conclude from \eqref{eq: negative ineq-3} that
    \begin{equation*}
        0\ge -C\dfrac{H+f_0}{\rho^2}+ \dfrac{\overline{b}}{36} F^{11}\dfrac{\overline{b}_1^2}{\overline{b}^2} \ge -C\dfrac{H+f_0}{\rho^2} + \dfrac{\alpha
        ^2}{C} f_0 H\overline{b}.
    \end{equation*}
    This implies that 
    \begin{equation*}
        \rho^2\overline{b}\leq \dfrac{C}{\alpha^2}\left(1+\dfrac{1}{f_0}\right).
    \end{equation*}
    Consequently, we obtain the following upper bound on $P$ at $x_0$:
    \begin{equation}\label{eq: upper bound on P-3}
        P\leq C\Gamma + \log\alpha^{-2} + \log \left(1+\dfrac{1}{f_0}\right)    \end{equation}

    Finally, for a sufficiently large constant $C=C(4)$, we take
    \begin{equation*}
        \gamma= C(1+\sqrt{f_0}),\quad \beta=\dfrac{1}{256C\gamma},\quad \alpha=\dfrac{1}{16C\gamma}, \quad \text{and}\quad \mu=\dfrac{1}{128C},
    \end{equation*}
    which satisfies all the parameter constraints: \eqref{eq: condition on parameters-1}, \eqref{eq: condition on parameters-2}, \eqref{eq: condition on parameters-3}, \eqref{eq: condition on parameters-4}, and \eqref{eq: condition on parameters-5}. We emphasize that the constant $\mu$ in the gradient smallness assumption  depends only on the dimension. 
    
    In conclusion, combining the upper bounds \eqref{eq: upper bound on P-1}, \eqref{eq: upper bound on P-2}, and \eqref{eq: upper bound on P-3}, we obtain
    \begin{equation*}
        \sup_{B_2} P \leq P(x_0) \leq C\left(4, \|u\|_{L^{\infty}}, \sqrt{f_0}, f_0^{-1} \right).
    \end{equation*}
    By the definition of $P$ and $\overline{b}$, we finally obtain the doubling inequality:
    \begin{equation*}
        \dfrac{\sup_{B_2} H}{\sup_{B_{1}} H}\leq C\left(4, \|u\|_{L^{\infty}}, \sqrt{f_0}, f_0^{-1} \right).
    \end{equation*}
    \end{proof}

    We now give the proofs of the two claims used previously.

    \vspace{0.3cm}
    \noindent\emph{Proof of Claim \ref{CLAIM: claim1}.} This claim follows from a straightforward computation. Recalling that
    \[  a_i = \innerproduct{X,e_i} - \innerproduct{X,E_{n+1}}\innerproduct{e_i, E_{n+1}},  \]
    and combining this with $X=(x,u)$ and $\nu=\frac{1}{W}(Du,-1)$, we obtain
    \begin{align*}
        \sum_{i=1}^{4}a_{i}^2&= \sum_{i=1}^{4} \innerproduct{X, e_i}^2 + \innerproduct{X, E_{n+1}}^2\sum_{i=1}^{4}\innerproduct{e_i, E_{n+1}}^2 - 2\innerproduct{X, E_{n+1}} \sum_{i=1}^{4}\innerproduct{X,e_i}\innerproduct{e_i, E_{n+1}}\\
        &= |X|^2 - \innerproduct{X,\nu}^2 + u^2 (1- \innerproduct{\nu, E_{n+1}}^2 ) - 2u \left( u - \innerproduct{X,\nu}\innerproduct{\nu,E_{n+1}} \right)\\
        &= \left(|X|^2-u^2\right) - \left( \innerproduct{X,\nu}- u \innerproduct{\nu, E_{n+1}} \right)^2\\
        &= |x|^2 - \left( \dfrac{x\cdot Du-u}{W}+\dfrac{u}{W} \right)^2\\
        &= |x|^2 - \left( \dfrac{x\cdot Du}{W} \right)^2.
    \end{align*}
    \hfill\qed

    \vspace{0.3cm}
    \noindent\emph{Proof of Claim \ref{CLAIM: claim2}.} Recalling the definition of $b_i$,
    \[ d_i= W\innerproduct{X,\nu}\innerproduct{e_i, E_{n+1}} + \innerproduct{X,e_i}. \]
    From the definition of $a_i$, we have $\innerproduct{X,e_i}= a_i+ u\innerproduct{e_i, E_{n+1}} $, so substituting this into above gives
    \[  d_i= (x\cdot Du - u) \innerproduct{e_i, E_{n+1}}+ a_i + u\innerproduct{e_i, E_{n+1}}= a_i + (x\cdot Du)\innerproduct{e_i, E_{n+1}}.  \]
    Next, we compute the term $(x\cdot Du)\innerproduct{e_i, E_{n+1}}$. First, note that $x\cdot Du = \sum_{k=1}^{4}\innerproduct{X,E_k}\innerproduct{E_k,\nu}W$. We also need the following two orthonormal projection identities: 
    \[   \innerproduct{E_k,\nu}\nu = E_k- \sum_{s=1}^{4} \innerproduct{E_k, e_s} e_s\quad \text{and}\quad \innerproduct{e_i, E_{n+1}} E_{n+1} = e_i - \sum_{t=1}^{4} \innerproduct{e_i, E_t} E_t.   \]
    Taking the inner product of these two identities and rearranging terms, we obtain
    \[  -\dfrac{1}{W}\innerproduct{E_k,\nu} \innerproduct{e_i, E_{n+1}} = \sum_{s,t=1}^{4} \innerproduct{E_k, e_s}\innerproduct{e_s, E_t} \innerproduct{E_t, e_i} - \innerproduct{E_k, e_i}.  \]
    Therefore, substituting back into the expression for $(x\cdot Du) \innerproduct{e_i, E_{n+1}}$, we obtain
    \begin{align*}
        (x\cdot Du) \innerproduct{e_i, E_{n+1}} &= W \sum_{k=1}^{4} \innerproduct{X,E_k}\innerproduct{E_k, \nu}\innerproduct{e_i, E_{n+1}}\\
        &= W^2 \sum_{k=1}^{4} \innerproduct{X,E_k}\left( \innerproduct{E_k,e_i} - \sum_{s,t=1}^{4} \innerproduct{E_k, e_s} \innerproduct{e_s, E_t}\innerproduct{E_t, e_i} \right)\\
        &= W^2 a_i - W^2 \sum_{s,t=1}^{4} a_s \innerproduct{e_s, E_t}\innerproduct{E_t, e_i}\\
        &= W^2 a_i - W^2 \sum_{s=1}^{4} a_s \left( \innerproduct{e_s,e_i} - \innerproduct{e_s, E_{n+1}}\innerproduct{e_i, E_{n+1}} \right)\\
        &= W^2\sum_{s=1}^{4} a_{s} \innerproduct{e_s, E_{n+1}} \innerproduct{e_i, E_{n+1}}.
    \end{align*}
    Finally, we conclude that
    \[  d_i = a_i + W^2 \sum_{s=1}^{4} a_{s} \innerproduct{e_s, E_{n+1}} \innerproduct{e_i, E_{n+1}}, \]
    which completes the proof of Claim \ref{CLAIM: claim2}. \hfill\qed

    By a rescaling argument, we obtain the following rescaled version of the doubling inequality, which asserts that the curvature at larger scales can be controlled by the curvature on any smaller ball.

    \begin{corollary}\label{COR: rescaled doubling}
        Let $\Sigma=(x,u(x))$ be a smooth $2$-convex graph over $B_1\subset \bb{R}^4$ satisfying the scalar curvature equation
      \begin{equation*}
          \sigma_{2}(\kappa) = 1 \quad \text{on}\quad B_1,
      \end{equation*}
       There exists a small dimensional constant $\mu=\mu(4)<1$, such that if 
      \begin{equation*}
          |Du|\leq \mu \quad \text{in}\quad B_{3r},
      \end{equation*}
      for some radius $0<r<1/3$, then for any $y\in B_{r/3}$ and $0<\rho< 2r/3$, we have
      \begin{equation}
          \sup_{B_r} H \leq C\left( 4, \|u\|_{L^{\infty}(B_1)}, r, r^{-1}, \rho,\rho^{-1} \right) \sup_{B_{\rho}(y)}H.
      \end{equation}
    \end{corollary}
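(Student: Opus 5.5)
The plan is to rescale the graph so that Proposition \ref{PROP: doubling} applies, and then iterate it finitely many times along a chain of balls from $B_\rho(y)$ up to $B_r$.

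\textbf{Rescaling.} For a center $z$ and radius $s$ with $B_{3s}(z)\subset B_{3r}$, set
\[
u_{z,s}(\tilde x)=\frac{u(z+s\tilde x)-u(z)}{s},\qquad \tilde x\in B_3 .
\]
This is a homothety of the graph by the factor $1/s$, so principal curvatures scale as $\tilde\kappa=s\kappa$. Hence $\sigma_2(\tilde\kappa)=s^2=:f_0$ and $\tilde H=sH$. The gradient is unchanged, $|Du_{z,s}|\le\mu$ on $B_3$, which is exactly the hypothesis of Proposition \ref{PROP: doubling}. Subtracting $u(z)$ and using the gradient bound gives
\[
\|u_{z,s}\|_{L^\infty(B_3)}\le 3\mu\le 3 .
\]
Applying Proposition \ref{PROP: doubling} and undoing the scaling, the factors of $s$ cancel in the ratio:
\[
\sup_{B_{2s}(z)}H\le C\bigl(4,s,s^{-1}\bigr)\sup_{B_s(z)}H .
\]
This holds whenever $B_{3s}(z)\subset B_{3r}$. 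The dependence on $\sqrt{f_0}=s$ and $f_0^{-1}=s^{-2}$ is monotone, so the constant is uniform for $s$ in a compact range $[s_0,r]$.

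\textbf{Chaining.} Fix the starting ball $B_\rho(y)$ with $|y|<r/3$ and $\rho<2r/3$. I will pick a finite sequence of balls $B_{s_j}(z_j)$ with three properties:
\begin{enumerate}[label=(\roman*)]
\item $B_{s_0}(z_0)\subset B_\rho(y)$;
\item each next ball lies inside the doubled previous one, $B_{s_{j+1}}(z_{j+1})\subset B_{2s_j}(z_j)$;
\item the last doubled ball covers the target, $B_r\subset B_{2s_N}(z_N)$, and every $B_{3s_j}(z_j)\subset B_{3r}$.
\end{enumerate}
For example, keep the center at $y$ and double the radius, starting from $s_0=\min(\rho,\cdot)$, as long as the constraint $3s\le 3r-|y|$ permits. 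Since $|y|<r/3$, radii up to $s=2r/3$ are allowed with center $y$. Then $B_{2s}(y)$ with $s=2r/3$ contains $B_{4r/3-|y|}\supset B_r$, so the covering requirement is met with a little room.

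The number of steps is about $N\approx\log_2(r/\rho)+C$, so it is controlled by $\rho^{-1}$ and $r$. Multiplying the $N$ doubling constants gives
\[
\sup_{B_r}H\le C(4,r,r^{-1},\rho,\rho^{-1})\sup_{B_\rho(y)}H .
\]
The $\|u\|_{L^\infty(B_1)}$ dependence listed in the statement is harmless: since we normalize by subtracting $u(z)$, it only enters through the trivial bound above.

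\textbf{Main obstacle.} The main difficulty is purely geometric bookkeeping. At each step the enlarged ball $B_{3s}(z)$ must stay inside $B_{3r}$, where the small-gradient hypothesis holds, while the final doubled ball must still cover $B_r$. The restrictions $|y|<r/3$ and $\rho<2r/3$ are what make this possible: the last radius $s=2r/3$ centered at $y$ satisfies both constraints. One must also make sure the first step is legal when $\rho$ is close to $2r/3$. If it is not, I take $s_0=\min(\rho,2r/3)$, shrinking further if needed, and monotonicity of $\sup H$ over nested balls handles the rest.
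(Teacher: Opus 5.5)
Your proposal is correct and is essentially the argument the paper intends; the paper gives no details beyond ``by a rescaling argument''. The homothety $u_{z,s}$ preserves $|Du|\le\mu$ and turns $\sigma_2=1$ into $\sigma_2=s^2$ with $\tilde H=sH$. Finitely many doublings centered at $y$, with radii in $[\rho/2,2r/3]$, then chain $B_\rho(y)$ up to $B_r$ with a constant depending only on $r$ and $\rho$. Subtracting $u(z)$ legitimately removes the $\|u\|_{L^\infty(B_1)}$ dependence. Also, since $\rho<2r/3$, the first step is always legal, so your $\min(\rho,\cdot)$ caveat is unnecessary.
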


\section{Alexandrov regularity}\label{Section: Alexandrov Regularity}

In this section, we establish the Alexandrov regularity for viscosity solutions to the scalar curvature equation. Our approach adapts the proof framework of the classical Alexandrov theorem for convex functions, as presented in \cite[Section 6.4]{Evans-Gariepy}. The proof of the classical Alexandrov theorem relies on two crucial ingredients: the interpretation of the Hessian as Radon measures, and gradient (or Lipschitz) estimates for convex functions. The former can be heuristically understood as convex functions admitting the following a priori $W^{2,1}$ estimates:
\begin{equation}\label{eq: 5.1}
    \int_{B_{1/2}}|D^2u|   \leq  \int_{B_{1/2}} \Delta u  = -\int_{\partial B_{1/2}} \dfrac{\partial u}{\partial \nu} \leq  C(n) \|Du\|_{L^{\infty}(B_{1/2})}\leq C(n)\|u\|_{L^{\infty}(B_1)}.
\end{equation}
Motivated by this heuristic, we are essentially able to establish the Alexandrov regularity under more general structural hypotheses on the function.

\begin{proposition}\label{PROP: general Alexandrov}
    Let $u$ be a continuous function on $B_{1}$ satisfying the following hypotheses:

    $(\mathrm{H}1)$ (Almost everywhere differentiability) $u$ is differentiable almost everywhere, and for each differentiable point $x\in B_1$, there holds
    \begin{equation}\label{eq: H1}
        \lim_{r\to 0} \fint_{B_{r}(x)}|Du(y)-Du(x)| \,\mathrm{d}y =0 .
    \end{equation}

    $(\mathrm{H}2)$ (Hessian as Radon measures) The distributional Hessian of $u$ can be interpreted as Radon measures. That is, there exist Radon measures $\left(\mu^{ij} \right)_{i,j=1,\cdots,n} $ such that $\mu^{ij}=\mu^{ji}$ and 
    \begin{equation}
        \int u\, \partial^{2}_{ij}\phi  = \int \phi \,\mathrm{d}\mu^{ij}\quad \text{for any}\ \phi\in C_{c}^{\infty}(B_1).
    \end{equation}

    $(\mathrm{H}3)$ (Lipschitz estimates) For almost every $x\in B_1$, let $g(y)= u(y)-u(x)- Du(x)\cdot (y-x)$. There exists a constant $r_x>0$, such that the following Lipschitz estimate holds for $0<r<r_x:$
    \begin{equation}\label{eq: H3}
        \sup_{\substack{y,z\in B_{r}(x)\\ y\neq z}} d_{y,z}^{n+1}\dfrac{|g(y)-g(z)|}{|y-z|}\leq C\int_{B_{r}(x)}|g(y)|\,\mathrm{d}y,
    \end{equation}
    where $d_{y,z}=\min\{d_y,d_z\}$ for $d_y=\mathrm{dist}(y,\partial B_{r}(x))$, and $C$  is a constant independent of $r$.

    Then, $u$ is twice differentiable almost everywhere in $B_{1}$. That is, for almost every $x\in B_1$, there exists a quadratic polynomial $Q$ such that
    \[ |u(y)-Q(y)|= o\left( |y-x|^2 \right),\quad \text{as}\ y\to x. \]
\end{proposition}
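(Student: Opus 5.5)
I will follow the proof of the classical Alexandrov theorem in \cite[Section 6.4]{Evans-Gariepy}, replacing convexity by the hypotheses (H1)--(H3). By (H2), Lebesgue decomposition gives
\[
  \mu^{ij}=D^2_{ac}u^{ij}\,\mathrm{d}x+\mu^{ij}_s ,\qquad D^2_{ac}u^{ij}\in L^1_{\mathrm{loc}},\quad \mu^{ij}_s\perp \mathcal L^n .
\]
Define $|\mu|$ as the total variation measure of the matrix measure. For a.e.\ $x$ the following three properties hold simultaneously.
\begin{enumerate}[label=(\roman*)]
  \item $x$ is a differentiability point satisfying \eqref{eq: H1}.
  \item $x$ is a Lebesgue point of $D^2_{ac}u$, i.e. $\fint_{B_r(x)}|D^2_{ac}u(y)-D^2_{ac}u(x)|\,\mathrm{d}y\to 0$.
  \item $|\mu_s|(B_r(x))/r^n\to 0$, by the standard density theorem for singular measures.
  \item (H3) holds at $x$.
\end{enumerate}
Fix such an $x$, which we may take to be $0$. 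Set
\[
  Q(y)=u(0)+Du(0)\cdot y+\tfrac12 y^{T}D^2_{ac}u(0)\,y,\qquad h=u-Q,
\]
so the aim is $\sup_{B_r}|h|=o(r^2)$.

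\emph{Step 1: an $L^1$ bound via Poincar\'e.} Mollify $u$, apply Poincar\'e's inequality on $B_r$ to $Dh_\varepsilon$ and then to $h_\varepsilon$, and let $\varepsilon\to0$. Because the mollified Hessians converge as measures, this gives
\[
  \fint_{B_r}\bigl|Dh-(Dh)_{B_r}\bigr|\;\le\; C r^{1-n}\,|D^2h|(B_r)\;=\;o(r).
\]
Here $|D^2h|(B_r)\le \int_{B_r}|D^2_{ac}u-D^2_{ac}u(0)|+|\mu_s|(B_r)=o(r^n)$ by (ii) and (iii). Property (i) together with $Dh(0)=0$ gives $\fint_{B_r}|Dh|\to 0$. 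Combined with the dyadic telescoping argument of Evans--Gariepy over radii $r2^{-k}$, this controls the averages $(Dh)_{B_r}=o(r)$, hence $\fint_{B_r}|Dh|=o(r)$. A second Poincar\'e step, using $h(0)=0$ and continuity, then gives $\fint_{B_r}|h|=o(r^2)$.

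\emph{Step 2: from $L^1$ to $L^\infty$ using (H3).} Apply (H3) with $g=u-u(0)-Du(0)\cdot y=h+\tfrac12 y^TD^2_{ac}u(0)y$ on the ball $B_{2r}$. For $y\in B_r$ we have $d_y\ge r$. Taking $z\in B_r$ with $|z|\to 0$ and using $g(0)=0$ gives
\[
  r^{n+1}\frac{|g(y)|}{|y|}\le C\int_{B_{2r}}|g| .
\]
The trouble is that $g$ contains the quadratic part, whose integral is $O(r^{n+2})$, not $o$. So I will instead apply (H3) to the first-order part and split off the quadratic piece. Define the oscillation of $h$ on $B_r$; the quadratic term has explicit Lipschitz constant $O(r)$ on $B_{2r}$. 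This yields only $O(r^2)$, which is not enough.

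The fix is the classical interpolation trick: for $y\in B_{r/2}$, estimate $|h(y)|$ by its average over a small ball $B_{\delta r}(y)$ plus $\delta r$ times a Lipschitz bound for $h$ on $B_r$. The Lipschitz bound comes from (H3) for $g$ combined with the explicit quadratic, giving $\mathrm{Lip}(h;B_r)\le C r^{-n-1}\int_{B_{2r}}|g|+Cr=O(r)$. Then
\[
  |h(y)|\le \fint_{B_{\delta r}(y)}|h|+C\delta r^2\le C\delta^{-n}o(r^2)+C\delta r^2 .
\]
Choosing $\delta$ small and then $r$ small gives $\sup_{B_{r/2}}|h|=o(r^2)$.

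The main obstacle is Step 1. Without convexity, the absolutely continuous density of the Hessian measure need not a priori be the pointwise limit of gradient difference quotients. This is exactly where (H1) and the telescoping argument are needed to identify $(Dh)_{B_r}=o(r)$, and I expect to carry that step out carefully via the mollification limit.
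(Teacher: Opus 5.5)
Your proposal is correct. Your Step 2 is essentially the paper's Steps 2--3. You apply (H3) on $B_{2r}$ and add the explicit quadratic to get $\mathrm{Lip}(h;B_r)\le \frac{C}{r}\fint_{B_{2r}}|h|+Cr=O(r)$. You then control $|h(y)|$ through a small nearby ball: you average over $B_{\delta r}(y)$, while the paper uses Chebyshev to pick a point $z\in B_{\eta r}(y)$ with $|h(z)|\le \varepsilon r^2$. These two are interchangeable.

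Your Step 1 is a genuinely different route to $\fint_{B_r}|h|=o(r^2)$.

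\emph{The paper's route.} It follows Evans--Gariepy verbatim:
\begin{itemize}
\item mollify, and write Taylor's formula with integral remainder for $u_\varepsilon$ at the center;
\item test against $\phi\in C_c(B_r)$ with $|\phi|\le 1$, and let $\varepsilon\to 0$ (in that argument (H1) is what gives $Du_\varepsilon(x)\to Du(x)$);
\item substitute $z=sy$, so the remainder is bounded by $|\mu_s|$ and by the $L^1$ oscillation of $D^2_{ac}u$ on the shrunken balls $B_{sr}$;
\item take the supremum over $\phi$ to get the $L^1$ bound in one stroke.
\end{itemize}

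\emph{Your route.} You run the standard $L^1$-differentiability argument for $BV$ functions twice. First, BV--Poincar\'e plus dyadic telescoping shows that $Du$ is $L^1$-differentiable at $x$ with derivative $D^2_{ac}u(x)$; here (H1) fixes the limit $(Dh)_{B_{r2^{-k}}}\to 0$. Second, Poincar\'e for $h$ plus another telescoping pass, together with continuity and $h(0)=0$, integrates this once more.

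\emph{Comparison.} Both routes use exactly the same inputs: (H1), (H2), the Lebesgue-point property of $D^2_{ac}u$, and the vanishing density of $\mu_s$. Yours is more modular and makes the role of (H1) transparent, at the cost of two telescoping passes and the BV Poincar\'e inequality. The Taylor-formula route is more compact.

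Your stated worry about the lack of convexity in Step 1 is unnecessary. In Evans--Gariepy, convexity is used only in the $L^\infty$ upgrade, which is precisely what (H3) replaces.

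Two minor points: your list announces three properties but gives four, and the abandoned first attempt in Step 2 can simply be deleted.
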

\begin{proof}
    \emph{Step 1. Approximation in $L^1$ sense.} By the Lebesgue-Radon-Nikodym decomposition, we write $\mu^{ij}= u^{ij}\,\mathrm{d}x + \mu^{ij}_{s}$, where $\mathrm{d}x$ denotes the $n$-dimensional Lebesgue measure, $u^{ij}\in L^{1}_{\mathrm{loc}}$ denotes the absolutely continuous part with respect to $\mathrm{d}x$, and $\mu^{ij}_{s}$ denotes the singular part. Write $[D^2u]= D^2u \,\mathrm{d}x + [D^2u]_{s}$ for $D^2u=\left(u^{ij}\right)$ and $[D^2u]_{s}= [\mu^{ij}_{s}]$. For almost every $x\in B_1$, there hold
    \begin{align}
    &\lim_{r\to 0}\fint_{B_{r}(x)}|D^2u(y)-D^2u(x)|\,\mathrm{d}y=0, \label{eq: 5.5} \\
    &\lim_{r\to 0}\dfrac{1}{r^n}\|[D^2u]_{s}\|(B_{r}(x))=0. \label{eq: 5.6}
\end{align}

    Fix any $x$ such that \eqref{eq: H1}, \eqref{eq: H3}, \eqref{eq: 5.5} and \eqref{eq: 5.6}  hold, following verbatim the argument in  Steps 2-4 of \cite[pp. 274-275]{Evans-Gariepy}, we can conclude that
    \[   \fint_{B_{r}(x)}|h(y)|\,\mathrm{d}y = o(r^2)\quad \text{as}\ r\to 0,  \]
    where $h(y)= u(y)- u(x) - Du(x)\cdot (y-x) - \frac{1}{2}(y-x)^{T}D^2u(x)(y-x)$.
    
    \vspace{0.3cm}
    \emph{Step 2. Lipschitz estimates for $h$.}  For any $0<2r<r_x$, using the Lipschitz estimate \eqref{eq: H3} for $g$, we have
   \begin{equation}\label{eq: 5.7}
       \begin{split}
           r^{n+1} \sup_{\substack{y,z\in B_{r}(x)\\ y\neq z}} \dfrac{|g(y)-g(z)|}{|y-z|} &\leq \sup_{\substack{y,z\in B_{2r}(x)\\ y\neq z}} d_{y,z}^{n+1} \dfrac{|g(y)-g(z)|}{|y-z|}\\
           &\leq C \int_{B_{2r}(x)} |g(y)|\,\mathrm{d}y\\
           &\leq  C\left( \int_{B_{2r}(x)} |h(y)|\,\mathrm{d}y + r^{n+2} \right),
       \end{split}
   \end{equation}
   where $d_{y,z}=\min\{ 2r-|y-x|, 2r-|z-x| \}$ and $C$ is independent of $r$. Combining \eqref{eq: 5.7} with the identity
   \[  (y-x)^{T}D^2u(x)(y-x)-(z-x)^{T}D^2u(x)(z-x)= (y+z-2x)^{T}D^2u(x)(y-z),  \]
   we obtain the following Lipschitz estimate for $h$:
   \begin{equation}
      \sup_{\substack{y,z\in B_{r}(x)\\ y\neq z}} \dfrac{|h(y)-h(z)|}{|y-z|} \leq \sup_{\substack{y,z\in B_{r}(x)\\ y\neq z}} \dfrac{|g(y)-g(z)|}{|y-z|} + Cr
       \leq \dfrac{C}{r}\fint_{B_{2r}(x)}|h(y)|\,\mathrm{d}y +Cr.
   \end{equation}

    \vspace{0.3cm}
    \emph{Step 3: Improve $L^1$ approximation to $L^{\infty}$.} For any small $\varepsilon>0$, we may find a sufficiently small $r_0>0$ such that
    \[ \dfrac{1}{r^2}\sup_{B_{r/2}(x)}|h| \leq 2\varepsilon,\quad \text{for}\ r<r_0.  \]
    Fix a small parameter $0<\eta<1/2$ to be chosen later. By Step 1, we have
    \begin{equation}\label{eq: 5.9}
        \left| \{ z\in B_{r}(x): |h(z)|\ge\varepsilon r^2 \}  \right| \leq \dfrac{1}{\varepsilon r^2} \int_{B_{r}(x)} |h(z)|\,\mathrm{d}z = \dfrac{o(r^{n+2})}{\varepsilon r^2} < \dfrac{1}{2}\eta^n |B_{r}|,
    \end{equation}
    provided $r< r_0= r_0(n,\eta,\varepsilon)$. We claim that for any $y\in B_{r/2}(x)$, there exists $z\in B_{\eta r}(y)$ such that $|h(z)|\leq \varepsilon r^2$. Otherwise, we would have $B_{\eta r}(y)\subset \{|h|\ge \varepsilon r^2\} \cap B_{r}(x)$, which implies that
    \[  \eta^n|B_r|\leq \left| \{ z\in B_{r}(x): |h(z)|\ge\varepsilon r^2 \}  \right| \leq \dfrac{1}{2} \eta^n |B_r|. \]
    This contradicts \eqref{eq: 5.9}. Therefore, 
    \begin{equation*}
        \begin{split}
            |h(y)|\leq |h(z)|+ \eta r \dfrac{|h(y)-h(z)|}{|y-z|} &\leq \varepsilon r^2 + \eta r\left( \dfrac{C}{r}\fint_{B_{2r}(x)}|h| + Cr \right)\\
            &\leq \varepsilon r^2 + \eta o(r^2) + C\eta r^2.
        \end{split}
    \end{equation*}
    We choose $\eta = \varepsilon/2C$ and reduce $r_0$ if needed, then we finally conclude that
    \[  |h(y)| \leq 2\varepsilon r^2 \quad \text{for}\ r<r_0=r_0(n,\varepsilon).  \]
    Since $y\in B_{r/2}(x)$ is arbitrary, this completes the proof.

\end{proof}

 For smooth $2$-convex functions, we have an a priori $W^{2,1}$ estimate analogous to \eqref{eq: 5.1}, since $\Delta u= \sqrt{2\sigma_2(D^2u) + |D^2u|^2}$. It follows that $(\mathrm{H}2)$ holds for general $2$-convex functions via a standard approximation argument.
 
 For $k$-convex functions with $k>n/2$, while there is no gradient (or Lipschitz) estimate, it was proved by Trudinger and Wang \cite{Trudinger-Wang} that they admit $W^{1,n+}$ and H\"older estimates. As a consequence, $(\mathrm{H}1)$ is satisfied, and the Lipschitz estimate in $(\mathrm{H}3)$ can be replaced by the corresponding H\"older estimates. The Alexandrov regularity for $k$-convex functions with $k>n/2$ can thus be established. It was first proved by Chaudhuri and Trudinger \cite{Chaudhuri-Trudinger}. 

In \cite{Shankar-Yuan-annals}, Shankar and Yuan studied the equation $\sigma_{2}(D^2u)=1$ in dimension 4. In this case, $k=2$ and $n=4$, which is exactly the critical case of Chaudhuri-Trudinger's result, so it cannot be directly applied. However,  they have a stronger tool --- the PDE $\sigma_{2}(D^2u)=1$. Earlier works by Trudinger \cite{Trudinger-CPDE}, and also Chou-Wang \cite{Chou-Wang}, established that $2$-convex solutions to $\sigma_{2}(D^2u)=1$ satisfy the required Lipschitz estimates, which ensures that $(\mathrm{H}1)$ and $(\mathrm{H}3)$ are satisfied. Therefore, Shankar and Yuan proved the Alexandrov regularity for viscosity solutions to $\sigma_{2}(D^2u)=1$. For the Alexandrov regularity of general $k$-convex viscosity solutions to $\sigma_{k}(D^2u)=f$ with positive Lipschitz right-hand side $f$, we refer the reader to \cite{Fan}.  

We now  turn our attention to the scalar curvature equation. First, Korevaar \cite{Korevaar} proved the following gradient estimates:

\begin{proposition}[Korevaar]\label{PROP: Korevaar grad est}
    Let $\Sigma=(x,u(x))$ be a smooth $2$-convex graph over $B_1\subset\bb{R}^n$ satisfying the scalar curvature equation
    \begin{equation}\label{eq: 5.10}
        \sigma_{2}(\kappa)=f_0\quad \text{on}\quad B_1,
    \end{equation}
    for some positive constant $f_0$. Then,
    \begin{equation}\label{eq: Korevaar grad est}
        \sup_{B_{1/2}}|Du|\leq C(n) \exp \left\{ C(n) \left(1+\sqrt{f_0}\right)  \left( \sup_{B_1} u - u(0) \right)^2  \right\}.
    \end{equation}
    
\end{proposition}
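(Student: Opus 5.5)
The plan is the classical maximum principle argument of Korevaar, applied to a test function built from the height and the gradient. Normalize by translating vertically so that $u(0)=0$, and set $M=\sup_{B_1}u$. Since $\sigma_2(\kappa)>0$ with $\kappa\in\Gamma_2$, the mean curvature $H$ is positive. Hence $u$ is a subsolution of the minimal surface operator, and the maximum principle gives $u\ge \min_{\partial B_r}u$ on $B_r$.

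Rather than working directly in the graph, I would use Korevaar's normal-perturbation trick on $\Sigma$. The test function is
\[ \Phi=\eta\,W \quad\text{with}\quad \eta=\Big(e^{K\phi}-1\Big)_+,\qquad \phi= \Big(1-|x|^2\Big) - \frac{u-M}{2M}\ \text{(or a similar cutoff mixing height and }|x|^2\text{)}, \]
where $K$ is a large constant. Viewed on $\Sigma$, we have $W=-1/\innerproduct{\nu,E_{n+1}}$, so we may use the formulas already derived in the proof of Proposition \ref{PROP: doubling}:
\[ W_i=W^2\innerproduct{e_i,E_{n+1}}\kappa_i,\qquad \sum_i F^{ii}W_{ii}=\tfrac{2}{W}\sum_i F^{ii}W_i^2+W\sum_i F^{ii}\kappa_i^2 . \]

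\emph{Step 1.} At an interior maximum point of $\Phi$, choose a frame that diagonalizes $h_{ij}$. From $\Phi_i=0$ we get $\eta_i/\eta=-W_i/W$. Substituting into $0\ge F^{ii}\Phi_{ii}$ and using the identity for $F^{ii}W_{ii}$ produces a positive term $\sum F^{ii}\kappa_i^2\,\eta W$. This term must dominate the contributions of $\eta$.

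\emph{Step 2.} Compute $F^{ii}\eta_{ii}$. Here I use $\innerproduct{X,E_{n+1}}_{ii}=-\kappa_i\innerproduct{\nu,E_{n+1}}=\kappa_i/W$, so that $\sum F^{ii}u_{ii}=2f_0/W$. I also use $|x|^2$ derivatives that are bounded by $CH$, as in \eqref{eq: est on rho_i}--\eqref{eq: est on rho_ii}. The gradient term $K^2e^{K\phi}F^{ii}\phi_i^2$ is controlled through $\phi_i\approx -\frac{1}{2M}\innerproduct{e_i,E_{n+1}}$; when $W$ is large, $E_{n+1}^\top$ has length close to $1$. The key algebraic point is this. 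Among the directions, the one along $E_{n+1}^\top$ has $F^{ii}\phi_i^2\gtrsim F^{ii}/M^2$. If $\kappa_i$ in that direction is small compared to $H$, then $F^{ii}\ge cH$ by Lemma \ref{LEMMA: est on linearized operator}, and the term $K^2F^{ii}/M^2$ beats $CK(H+f_0)$ once $K\gtrsim M(1+\sqrt{f_0})$. If instead $\kappa_i$ is large, the first-order condition $\eta_i=-\eta W_i/W$ forces $\kappa_i$ to be comparable to $K/M$. In that case the term $\sum F^{ii}\kappa_i^2$ wins, with $F^{ii}\ge f_0/H$ again taken from Lemma \ref{LEMMA: est on linearized operator}.

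\emph{Step 3.} Conclude that at the maximum point $W\le C$ (dimensional). Evaluating $\Phi$ on $B_{1/2}$, where $\phi\ge c>0$, then gives
\[ W\le C\,e^{K\sup\phi}\le C\exp\{C(1+\sqrt{f_0})M^2\}, \]
since $K\sim(1+\sqrt{f_0})M$ and $\phi\lesssim M$ in the normalization. This produces the quadratic exponent in \eqref{eq: Korevaar grad est}.

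The main obstacle is Step 2. Its difficulty is the case analysis that balances the second-order bad terms $K e^{K\phi}F^{ii}\phi_{ii}$, which are of size $KH$, against the good terms. This is the step where the choice $K\sim(1+\sqrt{f_0})M$ and the use of Lemma \ref{LEMMA: est on linearized operator} must be done carefully. I would follow Korevaar's original computation, which in practice is done by deforming $\Sigma$ along the normal and comparing the resulting sub- and supersolutions, and this comparison is what justifies the sign conditions.
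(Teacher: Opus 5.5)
The paper gives no proof of this proposition; it is quoted from Korevaar. Your sketch therefore has to stand on its own. Its architecture is right: with $\Phi=\eta W$ and $\eta=(e^{K\phi}-1)_+$, the condition $\Phi_i=0$ together with the formula for $\sum_iF^{ii}W_{ii}$ makes the $W_i$-terms cancel exactly, and at $x_0$ you are left with
\[ Ke^{K\phi}\Big(\sum_iF^{ii}\phi_{ii}+K\sum_iF^{ii}\phi_i^2\Big)+\eta\sum_iF^{ii}\kappa_i^2\le 0 . \]
However, both places where the argument actually lives are mishandled.

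\emph{The sign of the height term.} With $u(0)=0$ and $M=\sup_{B_1}u$, your $\phi=1-|x|^2-\frac{u-M}{2M}$ equals $\frac{M-u}{2M}\ge0$ on $\partial B_1$. So $\eta$ is not a cutoff, and the maximum of $\eta W$ can sit on the boundary. Also, $H>0$ makes $u$ a subsolution of the minimal surface operator, which gives $u\le\max_{\partial B_r}u$, not $u\ge\min_{\partial B_r}u$. For a bound in terms of $M'=\sup_{B_1}u-u(0)$ you need $\phi=1-|x|^2+\frac{u-M}{2M'}$. This is $\le0$ on $\partial B_1$, $\le1$ everywhere, and $=\frac12$ at $0$. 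It also contributes the favorable term $\frac{1}{2M'}\sum_iF^{ii}u_{ii}=\frac{f_0}{M'W}$, so $\sum_iF^{ii}\phi_{ii}\ge-C(n)(H+f_0)$. With this sign, though, your Step 3 claim that $\phi\ge c$ on $B_{1/2}$ is no longer automatic; the argument gives the bound at the center.

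\emph{The lower bound for $K\sum_iF^{ii}\phi_i^2$ in Step 2.} This is the more serious gap. In the frame diagonalizing $h$ there is no ``direction along $E_{n+1}^\top$''. Write $t_i=\langle e_i,E_{n+1}\rangle$. The componentwise approximation $\phi_i\approx\mp t_i/(2M)$ is false: $\phi_i=-2a_i+\frac{t_i}{2M'}$ with $a_i=\sum_{k\le n}x_k\langle e_i,E_k\rangle=O(1)$, which can cancel or dominate the second term when $M'\gtrsim1$. Only the projection is controlled. Since $\sum_ia_it_i=\frac{x\cdot Du}{W^2}=O(1/W)$, you get $\sum_i\phi_it_i\ge\frac{1}{8M'}$ once $W\ge\max\{2,8M'\}$.

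Your case split does not close either. The first-order condition gives $\kappa_i=-\frac{Ke^{K\phi}\phi_i}{\eta Wt_i}$, which carries a factor $1/W$; it is not ``comparable to $K/M$''. In the large-$\kappa_i$ branch, $\eta F^{ii}\kappa_i^2$ with only $F^{ii}\ge f_0/H$ has size about $\eta f_0H$. That cannot absorb $CKe^{K\phi}H$, since $K\gg f_0$ in general.

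\emph{The missing idea.} Use the first-order condition for the \emph{sign} of $\kappa_i$, not its size.
\begin{itemize}
\item Pick $i$ with $\phi_it_i\ge\frac{1}{8nM'}$.
\item Then $\kappa_i=-\frac{Ke^{K\phi}\phi_it_i}{\eta Wt_i^2}<0$, so $F^{ii}=H-\kappa_i>H$.
\item Also $\phi_i^2\ge\phi_i^2t_i^2\ge\frac{1}{64n^2M'^2}$.
\item Since $f_0\le H\sqrt{f_0/2}$, if $W(x_0)\ge\max\{2,8M'\}$ the displayed inequality is violated for $K=\max\{1,C(n)M'^2(1+\sqrt{f_0})\}$.
\end{itemize}
Hence $W(x_0)<\max\{2,8M'\}$, which is not a dimensional constant. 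Then $W(0)\le\eta(x_0)W(x_0)/\eta(0)\le C(1+M')e^{K}/(e^{K/2}-1)$ yields the quadratic exponent.

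Equivalently, take $e_1\parallel E_{n+1}^\top$. The first-order condition then reads $h(e_1,\cdot)=-c\nabla\phi$ with $c>0$, and this forces $F(\nabla\phi,\nabla\phi)\ge H\phi_1^2$ whenever $\phi_1>0$. With your sign, $\sum_i\phi_it_i<0$, the selected $\kappa_i$ is positive, and $F^{ii}\gtrsim H$ no longer comes for free. Deferring this step to ``Korevaar's comparison'' leaves exactly the crux unproved.
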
   

\begin{remark}\label{RMK: Lip reg for viscosity solu}
    By solving the Dirichlet problem with smooth approximating boundary data \cite{Ivochkina, Ivochkina-Lin-Trudinger}, we deduce from the above gradient estimate that any viscosity solution $u$ to \eqref{eq: 5.10} is locally Lipschitz. There is a subtle technical issue to clarify here: when solving the Dirichlet problem for \eqref{eq: 5.10} on a domain $\Omega\subset B_1$, a curvature condition on the boundary $\partial\Omega$ is required (see \cite[Theorem 4.1]{Ivochkina}), namely
        \begin{equation*}
            \dfrac{n-2}{n}\sigma_2(\kappa_{\partial\Omega})>1.
        \end{equation*}
    To apply this Dirichlet solvability result rigorously, we may first solve the Dirichlet problem with smooth approximating boundary data on any small ball $B_{\rho}(y)\subset B_1$, where $\rho$ is sufficiently small such that the above curvature condition holds. This yields $u\in C^{0,1}_{\mathrm{loc}}\left(B_{\rho}(y)\right)$, and further conclude that $u\in C^{0,1}_{\mathrm{loc}}\left(B_{1}\right)$. By the Rademacher theorem and the Lebesgue differentiation theorem, $u$ is differentiable almost everywhere and satisfies \eqref{eq: H1}, which verifies hypothesis $(\mathrm{H}1)$.
\end{remark}

In the following two subsections, we will verify hypotheses $(\mathrm{H}2)$ and $(\mathrm{H}3)$ respectively.

\subsection{A priori $W^{2,1}$ estimates/ interpretation of the Hessian as Radon measures}

 By the gradient bound of the solution $u$ from Korevaar's estimate (Proposition \ref{PROP: Korevaar grad est}), the Hessian $D^2u$ is approximately equal to the normalized second fundamental form $A=g^{-1}h$ of the graph $\Sigma=(x,u(x))$ in the natural coordinates. Moreover, the $2$-convexity of $\Sigma$ further implies  that the normalized second fundamental form $A$ can be controlled by the mean curvature. Since the mean curvature operator has a divergence structure, we can derive the a priori $W^{2,1}$ estimate via integration by parts. More generally, this argument extends to yield $W^{2,1}$ estimates for any $2$-convex graph with an a priori gradient bound.

\begin{proposition}\label{PROP: W^2,1 est}
    Let $\Sigma=(x,u(x))$ be a smooth $2$-convex graph over $B_1\subset \bb{R}^n$. Suppose that $u$ has an a priori gradient bound $\|Du\|_{L^{\infty}(B_1)}=\Gamma$, then
    \[ \int_{B_1}|D^2u|\leq C= C(n,\Gamma). \]
\end{proposition}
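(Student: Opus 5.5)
The argument compares $D^2u$ with the second fundamental form and then integrates the mean curvature by parts. In the graph coordinates $x$, the second fundamental form is $h_{ij}=u_{ij}/W$ and the metric is $g_{ij}=\delta_{ij}+u_iu_j$. The principal curvatures are therefore the eigenvalues of $A=g^{-1/2}\frac{D^2u}{W}g^{-1/2}$, which is symmetric and similar to $g^{-1}h$. Since $\|Du\|_{L^\infty}=\Gamma$, the matrices $g$ and $g^{-1/2}$ have eigenvalues in $[1,1+\Gamma^2]$ and $[(1+\Gamma^2)^{-1/2},1]$ respectively. Hence
\[
|D^2u|=W\,|g^{1/2}Ag^{1/2}|\le C(\Gamma)\,|A|=C(\Gamma)\Big(\sum_i\kappa_i^2\Big)^{1/2}.
\]

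Next we use $2$-convexity. Since $\kappa\in\Gamma_2$, we have $\sigma_2(\kappa)>0$ and $H=\sigma_1(\kappa)>0$, so
\[
|A|^2=H^2-2\sigma_2(\kappa)\le H^2, \qquad\text{that is}\qquad |A|\le H .
\]
Combining this with the previous step gives the pointwise bound
\[
|D^2u|\le C(n,\Gamma)\,H \quad\text{on } B_1 .
\]

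The mean curvature of a graph has divergence form:
\[
nH_{\mathrm{norm}}=\operatorname{div}\!\Big(\frac{Du}{W}\Big),
\]
where, with the paper's convention $H=\sigma_1(\kappa)$ and the sign chosen so that $H>0$ for the $2$-convex graph, $H=\operatorname{div}(Du/W)$ up to a sign fixed by the orientation. The vector field satisfies $|Du/W|\le 1$. Integrating over $B_r$ for $r<1$, the divergence theorem gives
\[
\int_{B_r}H=\int_{\partial B_r}\frac{Du\cdot\nu}{W}\le |\partial B_r|\le C(n).
\]
Here $H\ge 0$ is used, so no absolute values are lost. Letting $r\to1$ by monotone convergence, or applying the estimate directly on $B_1$ when $u$ is smooth up to the boundary, yields
\[
\int_{B_1}|D^2u|\le C(n,\Gamma)\int_{B_1}H\le C(n,\Gamma).
\]

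The only delicate point is the first step, the linear-algebra comparison between $D^2u$ and the principal curvatures. It has to be written in the symmetric form $g^{-1/2}D^2u\,g^{-1/2}$ so that the Frobenius norms can be compared with constants depending only on $\Gamma$. This is standard: $g^{1/2}$ has operator norm at most $\sqrt{1+\Gamma^2}$, and $W\le\sqrt{1+\Gamma^2}$. Everything else is an exact identity plus the divergence theorem, so the constant $C$ depends only on $n$ and $\Gamma$, as claimed in Proposition \ref{PROP: W^2,1 est}.
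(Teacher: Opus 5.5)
Your proof is correct and follows the paper's argument: you bound $|D^2u|\le C(n,\Gamma)H$ using the uniformly bounded metric and the inequality $|\kappa|\le H$ coming from $2$-convexity, then integrate $H=\operatorname{div}(Du/W)$ to get $\int_{B_1}H\le|\partial B_1|$. Two of your steps are slightly more careful versions of the paper's: the symmetric normalization $g^{-1/2}hg^{-1/2}$ (the paper bounds entries of the nonsymmetric $g^{-1}h$ directly), and the exhaustion $B_r\uparrow B_1$ (the paper integrates by parts on $B_1$ directly).
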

\begin{proof}
    Under the natural coordinates of $\bb{R}^{n+1}$, the induced metric on $\Sigma$ is given by $g=(g_{ij})= (\delta_{ij}+ u_i u_j)$, and the second fundamental form is $h=(h_{ij})= \left(u_{ij}/W\right) $. Denote the normalized second fundamental form by $A=g^{-1}h=(a_{ij})$. From the $2$-convexity of $\Sigma$, we have
    \[ H=\sigma_{1}(A)>0\quad \text{and} \quad \sigma_2(A)=\dfrac{1}{2}\left(H^2-|A|^2\right)>0. \]
    It follows that $|a_{ij}|\leq H$ for all $i,j=1,\cdots, n$. Then
    \[ \int_{B_1}|u_{ij}| = \int_{B_1} \left| W\sum_{k=1}^{n} g_{ik}a_{kj} \right|\leq C(n,\Gamma)\int_{B_1} H. \]
    Using the divergence structure of $H$, we conclude that
    \[ \int_{B_1} H \,\mathrm{d}x = \int_{B_1} \mathrm{div}\left( \dfrac{Du}{W} \right)\,\mathrm{d}x = -\int_{\partial B_1} \dfrac{Du\cdot \nu_{\partial B_1}}{W}\,\mathrm{d}S \leq \mathrm{Area}(\partial B_1)\leq C(n).  \]

\end{proof}

\begin{proposition}\label{PROP: Hessian as Radon measures}
    Let $\Sigma_{k}=(x, u_k(x))$ be a sequence of smooth $2$-convex graphs over $B_1\subset\bb{R}^n$. Suppose that $\|Du_k\|_{L^{\infty}(B_1)}\leq \Gamma$, and $u_k\to u$ locally uniformly for some Lipschitz function $u$ on $B_1$. Then, in the distributional sense, the Hessian $D^2u$ can be interpreted as a matrix-valued Radon measure $[D^2u] = [\mu^{ij}]$ with $\mu^{ij}=\mu^{ji}$. That is, 
    \[  \int u \,\partial^2_{ij}\phi = \int \phi\,\mathrm{d}\mu^{ij} \quad \text{for any}\ \phi\in C_{c}^{\infty}(B_1).   \]
\end{proposition}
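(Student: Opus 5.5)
We will use the a priori $W^{2,1}$ bound of Proposition \ref{PROP: W^2,1 est} and a weak-$*$ compactness argument for Radon measures. The uniform gradient bound $\|Du_k\|_{L^\infty(B_1)}\le\Gamma$ is the only quantitative input needed.

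First, fix a ball $B_r(y)\Subset B_1$. We apply Proposition \ref{PROP: W^2,1 est} to the rescaled graph $\tilde u_k(z)=r^{-1}u_k(y+rz)$ on $B_1$. This rescaling preserves $2$-convexity, since principal curvatures scale by $r^{-1}$, and it preserves the gradient bound $\Gamma$. Scaling back gives
\[
\int_{B_r(y)}|D^2u_k|\le C(n,\Gamma)\,r^{n-1}.
\]
Covering any compact $K\subset B_1$ by finitely many such balls, we obtain $\sup_k\int_K|D^2u_k|\le C(n,\Gamma,K)$. Thus for each pair $(i,j)$ the signed measures $\partial^2_{ij}u_k\,\mathrm{d}x$ have locally uniformly bounded total variation on $B_1$.

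Next, by the weak-$*$ compactness of Radon measures (Banach--Alaoglu on $C_c(B_1)^*$, applied to the positive and negative parts, or via Riesz representation on an exhaustion by compact sets together with a diagonal argument), we pass to a subsequence. Along it, $\partial^2_{ij}u_k\,\mathrm{d}x\rightharpoonup\mu^{ij}$ weakly-$*$ for every $i,j$, where each $\mu^{ij}$ is a signed Radon measure on $B_1$. Symmetry $\mu^{ij}=\mu^{ji}$ is inherited from $\partial^2_{ij}u_k=\partial^2_{ji}u_k$.

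Finally, we identify the limit with the distributional Hessian. For $\phi\in C_c^\infty(B_1)$, integration by parts gives
\[
\int u_k\,\partial^2_{ij}\phi=\int\phi\,\partial^2_{ij}u_k\,\mathrm{d}x .
\]
The left side converges to $\int u\,\partial^2_{ij}\phi$ by locally uniform convergence on $\mathrm{supp}\,\phi$. The right side converges to $\int\phi\,\mathrm{d}\mu^{ij}$ by weak-$*$ convergence, since $\phi\in C_c(B_1)$. Since the limiting identity determines $\mu^{ij}$ uniquely as a distribution, the limit is independent of the subsequence.

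The only real obstacle is the localization in the first step. Proposition \ref{PROP: W^2,1 est} is stated on the full ball, whereas the measures must be locally finite near every interior point. The rescaling argument is routine; the point to check is that $2$-convexity and the gradient bound are both scale-invariant, which they are.
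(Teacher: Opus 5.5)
Your proof is correct and is essentially the paper's argument. You use a uniform $W^{2,1}$ bound on the $u_k$, weak-$*$ compactness of the measures $\partial^2_{ij}u_k\,\mathrm{d}x$, and identify the limit by integrating by parts against $\phi\in C_c^\infty(B_1)$. The localization by rescaling that you flag as the main obstacle is harmless but unnecessary: the $W^{2,1}$ estimate already bounds $\int_{B_1}|D^2u_k|$ by $C(n,\Gamma)$, which controls the total variations on every $\Omega\subset\subset B_1$ uniformly in $k$, and this is exactly how the paper proceeds.
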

\begin{proof}
    For any $k\in\mathbb{N}^+$ and $i,j=1,\cdots,n$, define $\mu^{ij}_k= \partial^2_{ij} u_k \,\mathrm{d}x $. By Proposition \ref{PROP: W^2,1 est}, we have for any $\Omega\subset\subset B_1$ that $|\mu^{ij}_k |(\Omega)\leq C(n,\Gamma)$. By compactness, up to a subsequence, $\mu^{ij}_k$ converges weakly to a Radon measure $\mu^{ij}$. This is as our desired, since for any $\phi\in C_{c}^{\infty}(B_1)$, there holds
    \[  \int \phi \,\mathrm{d}\mu^{ij}=  \lim_k \int \phi \,\mathrm{d}\mu^{ij}_k = \lim_k \int \phi\, \partial^2_{ij} u_k = \lim_k \int u_k \,\partial^2_{ij}\phi  = \int u\,\partial^2_{ij}\phi. \]
\end{proof}

\subsection{Linearly dependent gradient estimates} \label{subsec: Linearly dependent gradient estimates}
    To establish the Lipschitz estimate \eqref{eq: H3}, we first need to establish the gradient estimate of the following form:
    \begin{equation}\label{eq: 5.12}
        \| Du \|_{L^{\infty}\left(B_{r/2}(x) \right)} \leq \dfrac{C}{r} \|u\|_{L^{\infty}\left(B_r(x)\right)}.
    \end{equation}
    Then, by interpolation, we can improve the dependence on the $L^{\infty}$ norm in \eqref{eq: 5.12} to the dependence on the $L^1$ norm.  We emphasize that for the interpolation to apply, the dependence on the $L^\infty$ norm on the right-hand side of estimate \eqref{eq: 5.12} must be linear. 

    Linearly dependent gradient estimates seem to be impossible for curvature equations.  For the minimal surface equation $\sigma_1(\kappa)=\mathrm{div}(Du/W)=0$, Bombieri, De Giorgi and Miranda \cite{B-DG-M} proved the following linear exponentially  dependent gradient estimate:
    \[  \| Du \|_{L^{\infty}(B_{1/2})} \leq C_1(n) \exp \left( C_{2}(n) \|u\|_{L^{\infty}(B_1)} \right).  \]
    Furthermore, Finn's example \cite{Finn} shows that this linear exponential dependence is optimal and cannot be improved. For the scalar curvature equation (or more generally, the $\sigma_k$-curvature equation), Korevaar's gradient estimate has a quadratic exponential dependence. Although there is no example showing whether this quadratic exponential  dependence is optimal, we believe that the gradient estimate for the scalar curvature equation cannot be better than the linear exponential  dependence, as the scalar curvature equation is a fully nonlinear equation that is more complicated than the minimal surface equation. 

    To overcome this difficulty, we note that the goal of Bombieri-De Giorgi-Miranda and Korevaar in proving gradient estimates is to show the boundedness of the solution's gradient. For our purpose of proving the Alexandrov regularity, we only need the linear dependence at small scales. Therefore, under the assumption that the gradient of the solution is bounded on $B_1$, we can improve the gradient bound to the form \eqref{eq: 5.12} at smaller scales.

    Also, for our subsequent applications involving rescaling, we state our proposition for equations with an arbitrary constant right-hand side.
    \begin{proposition}\label{PROP: linear dependent grad est}
        Let $\Sigma=(x,u(x))$ be a smooth $2$-convex graph over $B_1\subset\bb{R}^n$ satisfying 
        \[ \sigma_{2}(\kappa)=f_0 \quad \text{on}\quad  B_1, \]
        for some positive constant $f_0$. Suppose that $u$ has an a priori gradient bound $\|Du\|_{L^{\infty}(B_1)}=\Gamma$, then for any ball $B_{r}(x)\subset B_1$, we have the refined gradient estimate:
        \begin{equation}\label{eq: grad est 5.13}
            \sup_{B_{\frac{r}{2}(x)}}|Du| \leq \dfrac{C(n,\Gamma)}{r} \left(1+\sqrt{f_0} \right)  \underset{B_{r}(x)}{\mathrm{osc}} u.
        \end{equation}
    \end{proposition}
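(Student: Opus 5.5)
We prove this by a maximum principle argument in the spirit of Korevaar's gradient estimate, using the a priori bound $|Du|\le \Gamma$ so that the test function only needs to grow \emph{linearly} in $u$, not exponentially. By translation and rescaling we first reduce to $x=0$ and $r=1$. If $u_r(y)=r^{-1}u(x+ry)$, then the graph of $u_r$ is the graph of $u$ dilated by $r^{-1}$. Hence $Du_r(y)=Du(x+ry)$ still satisfies $|Du_r|\le \Gamma$, while $\sigma_2(\kappa)$ becomes $r^2 f_0\le f_0$ and $\mathrm{osc}\,u_r=r^{-1}\mathrm{osc}\,u$. So it suffices to show
\[ \sup_{B_{1/2}}|Du|\le C(n,\Gamma)\,(1+\sqrt{f_0})\,\underset{B_1}{\mathrm{osc}}\,u \]
with $f_0$ replaced by $r^2f_0\le f_0$. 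Subtracting a constant, we may also assume $\inf_{B_1}u=0$, so that $0\le u\le M:=\mathrm{osc}_{B_1}u$. If $M\ge 1/(1+\sqrt{f_0})$, the claim is trivial because $|Du|\le\Gamma\le \Gamma (1+\sqrt{f_0})M$. So the real case is $M(1+\sqrt{f_0})$ small.

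In that case we consider on $\Sigma$ a Korevaar-type test function
\[ P=\eta(x)\,\varphi(u)\,W,\qquad \eta=(1-|x|^2)^+,\qquad \varphi(u)=1-\frac{u}{2M} . \]
Note that $\varphi(u)\in[1/2,1]$, that $W\ge 1$, and that $W$ is comparable to $|Du|$ once $|Du|\ge 1$. The target bound is $\max P\le C(n,\Gamma)\,(1+\sqrt{f_0})\,M$ plus a term of order $1$. We write $W=-1/\langle\nu,E_{n+1}\rangle$ and, at the maximum point, diagonalize $h_{ij}$. Then we use $\sum F^{ii}P_{ii}\le0$ together with $P_i=0$ and the identities \eqref{eq: deravaties of W}. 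These give the good term $W\sum F^{ii}\kappa_i^2$, while $\sum_iF^{ii}u_{ii}$ is computed from $u=\langle X,E_{n+1}\rangle$ as $u_{ii}=-\kappa_i\langle\nu,E_{n+1}\rangle=\kappa_i/W$. Hence
\[ \sum_i F^{ii}u_{ii}=\frac{2\sigma_2}{W}=\frac{2f_0}{W},\qquad |\nabla u|^2=1-W^{-2}. \]
The cross terms $\varphi'\eta F^{ii}u_iW_i$ are handled through the critical point relation, which expresses $W_i/W$ in terms of $\eta_i/\eta$ and $\varphi' u_i/\varphi$. The $\varphi'$ contributions are of size $1/M$, and $|\nabla u|^2$ is close to $1$ when $W$ is large, which produces the positive term $\sum F^{ii}u_i^2/M^2$. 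The cutoff terms cost $C\sum F^{ii}/\eta^2\le C H/\eta^2$, and the $\varphi'$ term coming from $F^{ii}u_{ii}$ costs $f_0/(MW)$.

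As in Korevaar's argument, we split by whether the direction $e_1$ of the gradient of $u$ (tangential projection of $E_{n+1}$) carries a large $F^{11}$. By Lemma~\ref{LEMMA: est on linearized operator}, $F^{ii}\ge c_nH$ for $i\ge2$, and $F^{11}\ge f_0/H$. In the first case we rotate the frame so that $\nabla u$ is mostly along some $e_i$ with $i\ge 2$. Then $\sum F^{ii}u_i^2/M^2\gtrsim H/M^2$, which beats the cutoff cost $CH/\eta^2$ unless $\eta\lesssim M$. This yields $\eta W\lesssim C(\Gamma)$, and since $\eta\ge 3/4$ on $B_{1/2}$ this gives the conclusion in a form that still needs to be sharpened to a factor $M$. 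In the other case we use the term $\sum F^{ii}\kappa_i^2$ together with $\sigma_2=f_0$; this is where the factor $\sqrt{f_0}$ enters.

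The main obstacle is getting \emph{linear} dependence on $M$ rather than a bound of order $1$. The bound $W\le C(\Gamma)$ is already known; what is needed is $|Du|\lesssim M$ when $M$ is small. I would try first to apply the maximum principle to the quantity $\eta\,|\nabla u|^2/(\varphi(u)M)^2$, or equivalently $\eta\,|Du|^2/M^2$, rather than to $W$. Since $W\le\sqrt{1+\Gamma^2}$, all geometric quantities are uniformly comparable to their Euclidean counterparts. The operator $F^{ij}$ is then elliptic on the $(n-1)$-dimensional complement of one direction with constant $c(n,\Gamma)H$, and the structure becomes that of a Bernstein estimate for a quasi-uniformly-degenerate equation. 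The bad directions where $F^{11}\sim f_0/H$ are exactly the ones that the Korevaar case split handles, with $\sqrt{f_0}$ bounding $H$ from below. Any $\Gamma$-dependent constants produced in this step are acceptable.
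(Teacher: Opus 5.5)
Your preliminary reductions are correct: rescaling to $r=1$ with right-hand side $r^2f_0\le f_0$, normalizing $0\le u\le M$, and disposing of the case $M(1+\sqrt{f_0})\ge 1$. However, the proposal stops exactly where the proposition has content, namely the linear factor $M=\mathrm{osc}\,u$, and you leave that step as something you ``would try''.

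\textbf{Why the Korevaar-type function cannot work.} The function $P=\eta\varphi(u)W$ cannot produce a factor of $M$. Since $W\ge 1$ and $\varphi\ge 1/2$, you have $P\ge 3/8$ on $B_{1/2}$ independently of $M$. So the best this function can give is $W\le C(\Gamma)$, which is the hypothesis. Its ingredients also point the wrong way in the relevant regime:
\begin{itemize}
\item The positive term $\sum F^{ii}u_i^2/M^2$ relies on $|\nabla u|^2=1-W^{-2}\approx 1$, i.e.\ on $W$ being large. You must instead treat $|Du|\sim M\ll 1$.
\item In the degenerate case, where $\nabla u$ lies along the $\kappa_1$-direction, the available positive term is only of size $(f_0/H)|\nabla u|^2/M^2$. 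This cannot absorb the cutoff cost $CH/\eta^2$ as $H\to\infty$, and a lower bound $H\ge\sqrt{2f_0}$ does not help there.
\item With your decreasing weight, the critical point relation in the direction $e_1=\nabla u/|\nabla u|$ reads $h(e_1,e_1)=-\eta_1/(\eta\alpha)+1/(2MW\varphi)$, where $\alpha=|Du|$. This pushes $h(e_1,e_1)$ positive, which is precisely what makes $F^{11}$ small.
\end{itemize}
The alternative quantity $\eta|Du|^2/M^2$ is only named. You have neither a subsolution property for it nor a mechanism for the degenerate direction.

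\textbf{The two missing ideas.}

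First, $\alpha=|Du|=\sqrt{W^2-1}$ itself, not $W$, satisfies a Jacobi inequality $F^{ij}\alpha_{ij}\ge 0$. From the identity for $\sum_i F^{ii}W_{ii}$ and $W_i=W^2\langle e_i,E_{n+1}\rangle\kappa_i$ one gets
$F^{ij}\alpha_{ij}\ge \frac{W^2}{\alpha}\sum_i F^{ii}\kappa_i^2\bigl(1-\frac{W^2}{\alpha^2}\langle e_i,E_{n+1}\rangle^2\bigr)\ge 0$,
since $\langle e_i,E_{n+1}\rangle^2\le \alpha^2/W^2$.

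Second, use an additive test function $P=\eta\alpha+\frac{A}{2}v^2$. Here $v$ is shifted so that $M\le v\le 2M$ and $A=C(n,\Gamma)/M$. Then $\frac{A}{2}v^2=O(M)$ and $AvF^{ij}v_{ij}=2Avr^2f_0/W\ge 0$.

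In a frame with $\nabla v=v_1e_1$ one has $\alpha_1=W^2h_{11}$. So $P_1=0$ gives $h_{11}=-\frac{\alpha}{\eta W^2}\bigl(\eta_1+\frac{Av}{W}\bigr)\le 0$ once $C\ge|\nabla\eta|W$. This increasing weight has the opposite effect to your decreasing one. Hence $F^{11}=H-h_{11}\ge H$ in the gradient direction at the maximum. The degenerate direction is excluded automatically, with no case split.

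Then $AF^{ij}v_iv_j\ge AH\alpha^2/W^2$ absorbs the cutoff errors, which are of size $C(H+r^2f_0)\alpha/\eta$. This yields $\eta\alpha\le CM(1+\sqrt{f_0})$. The factor $\sqrt{f_0}$ enters only through $r^2f_0/H\le r\sqrt{f_0}$, using $\sum_i F^{ii}\kappa_i=2\sigma_2$ in the cutoff term.
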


    \begin{proof}
        By translation, we may assume that $x=0$ without loss of generality. Consider the linear rescaling:
        \[ \widetilde{\Sigma}:= \dfrac{\Sigma}{r}= \{ \left(x, v(x)\right) : x\in B_{1/r} \}, \quad \text{where}\ 0<r<1 \ \text{and}\ v(x)=\dfrac{u(rx)}{r}. \]
        Under this rescaling, the scalar curvature of $\widetilde{\Sigma}$ satisfies
        \begin{equation}
            \sigma_2(\kappa(v)) = r^2 f_0. 
        \end{equation}
        It suffices to estimate $\sup_{B_{1/2}}|Dv|$ in terms of $M=\mathrm{osc}_{B_1}v$. Hereafter, all subsequent computations are carried out on $\widetilde{\Sigma}$. We adopt the notations from Section \ref{Section: Preliminaries}, and denote the unit normal on $\widetilde{\Sigma}$ by $\nu=\frac{1}{W}(Dv,-1)$ for $W=\sqrt{1+|Dv|^2}$, and the position vector by $X=(x,v(x))$. We also choose an orthonormal frame $\{e_1, \cdots, e_n, \nu\}$ on $\widetilde{\Sigma}$. Note that $v$ can be viewed as a function on $\widetilde{\Sigma}$ by $v=\innerproduct{X, E_{n+1}}$, then we have
        \[ v_i=\nabla_{e_i} v = \innerproduct{e_i, E_{n+1}}\quad \text{and} \quad  v_{ij}=\nabla^2v(e_i,e_j)=-h_{ij}\innerproduct{\nu, E_{n+1}}=\dfrac{h_{ij}}{W}.\]

       By replacing $v$ with $v+M-\inf_{B_1}v$, we may assume that $M\leq v\leq 2M$ on $B_1$. Consider the following test function on $B_1$:
       \[ P= \eta |Dv| + \dfrac{A}{2}v^2, \]
       where $\eta= 1-|x|^2= 1-|X|^2+v^2$ is a cutoff, and $A$ is a constant to be fixed later. This test function is similar to the one used by Trudinger \cite{Trudinger-CPDE} in his proof of gradient estimate for $\sigma_k$-Hessian equations. It was later used by Warren and Yuan \cite{Warren-Yuan-AJM-10}, as well as Bhattacharya, Mooney and Shankar \cite{Bhattacharya-Mooney-Shankar}, to establish gradient estimates for the special Lagrangian equation and the Lagrangian mean curvature equation, respectively.

       In order to do computations on the hypersurface $\widetilde{\Sigma}$, note that $W=\sqrt{1+|Dv|^2}$,  we can write 
       \[ \alpha:= |Dv|=\sqrt{W^2-1}.  \]
       \begin{claim}[Jacobi inequality for $\alpha$]\label{CLAIM: claim3} 
           There holds $\Delta_{F}\alpha:= F^{ij}\alpha_{ij}\ge 0 $, where $F^{ij}=H\delta_{ij}-h_{ij}$ is the linearized operator.
       \end{claim}
       The proof of this claim is postponed to the end of the main argument. Throughout this proof, $C=C(n,\Gamma)$ denotes the universal constant which may change line by line.
       
        Suppose that $P$ attains its interior maximum at $x_0\in B_1$. Then at $X_0=(x_0, v(x_0))$, we have
        \begin{equation}\label{eq: DP=0 5.15}
            0= P_i = \eta_i \alpha + \eta \alpha_i + Avv_i
        \end{equation}
        and
        \begin{equation}\label{eq: negative ineq 5.16}
            0\ge F^{ij}P_{ij}= \underbrace{\left(F^{ij}\eta_{ij}\right)\alpha}_{:= I}  + \underbrace{2F^{ij}\eta_i \alpha_j}_{:=II} + \underbrace{\eta \left( F^{ij}\alpha_{ij} \right)}_{\ge 0} + \underbrace{AF^{ij}v_iv_j}_{:= III} + \underbrace{AvF^{ij}v_{ij}}_{\ge 0}.
        \end{equation}

        We may rotate the tangent frame $\{e_1,\cdots, e_n\}$ appropriately, such that $\nabla v = v_1 e_1$. That is
        \[  v_i = \innerproduct{e_i, E_{n+1}} =0 ,\quad \text{for}\ i\ge 2, \]
        and 
        \[  v_1= \innerproduct{e_1, E_{n+1}} = \sqrt{1-\innerproduct{\nu, E_{n+1}}^2}= \sqrt{1-\dfrac{1}{W^2}}=\dfrac{\alpha}{W}.   \]
        
        \emph{Estimate on I.} By a computation analogous to that in Section \ref{Section: Doubling ineq}, we have from \eqref{eq: est on rho_ii} that 
         \[ I\ge -C(H+r^2 f_0)\alpha. \]

        \emph{Estimate on II.} 
        From the $DP=0$ equation \eqref{eq: DP=0 5.15}, we have 
        \[ \alpha_j=-\dfrac{1}{\eta} \left( \eta_j\alpha + Avv_j \right) . \]
        Substituting this into the expression for $II$, we obtain
        \begin{align*}
            II = -\dfrac{2\alpha}{\eta} F^{ij}\eta_i \eta_j - \dfrac{2Av}{\eta} F^{ij}\eta_i v_j \ge -\dfrac{CH}{\eta}\alpha - \dfrac{CAMH}{\eta W}\alpha,
        \end{align*}
        where we have used the bound $|\nabla\eta|\leq C$.
        
        \emph{Estimate on III.} Now $III= A F^{11}v_1^2 = \frac{A}{W^2} F^{11}\alpha^2$. We need to estimate the lower bound of $F^{11}=H-h_{11}$. From the $DP=0$ equation \eqref{eq: DP=0 5.15} again, we have
        \[ \dfrac{WW_1}{\alpha} = \alpha_1 = -\dfrac{1}{\eta} \left( \eta_1 + \dfrac{Av}{W} \right)\alpha. \]
        Note that $W_1=W^2\innerproduct{e_k,E_{n+1}}h_{1k} =\alpha W h_{11} $, and substituting this into the above identity, we deduce that
        \[ h_{11} = -\dfrac{1}{\eta W^2} \left( \eta_1 + \dfrac{Av}{W} \right)\alpha. \]
        We fix $A=C/M$ for a sufficiently large constant $C=C(n,\Gamma)$ such that $A\ge  |\nabla\eta| W/M. $ Then we have $h_{11}\leq0$, and thus $F^{11}=H-h_{11}\ge H.$ This gives the lower bound
        \[ III\ge \dfrac{AH}{W^2}\alpha^2. \]

        Combining the above estimates for $I,II$ and $III$, \eqref{eq: negative ineq 5.16} implies that
        \begin{align*}
            0\ge -C(H+r^2f_0)\alpha - \dfrac{CAMH}{\eta W}\alpha + \dfrac{AH}{W^2}\alpha^2.
        \end{align*}
        It follows that
        \[ \eta\alpha\leq \dfrac{CW^2}{A} \left(1+ \dfrac{r^2f_0}{H}\right) \leq \dfrac{C}{A} \left(1+ r\sqrt{f_0} \right)\leq CM \left(1+\sqrt{f_0} \right) , \]
        where we used $H=\sqrt{2r^2 f_0+|\kappa|^2} \ge r \sqrt{f_0}$ and $r\leq 1$. Finally,  we conclude that
        \begin{align*}
            \sup_{B_{1/2}} |Dv| \leq C\sup_{B_1}P \leq CP(x_0) \leq CM \left(1+\sqrt{f_0} \right) + CAM^2 \leq CM \left(1+\sqrt{f_0}\right).
        \end{align*}
        Rescaling back to $u$, we complete the proof.
    \end{proof}

    \noindent\emph{Proof of Claim \ref{CLAIM: claim3}.} We prove this Jacobi inequality pointwise. For any fixed point $p$, by choosing $\{e_1,\cdots, e_n\}$ appropriately, we may assume that $(h_{ij})$ is diagonal at $p$ with $h_{ii}=\kappa_i$ and $\kappa_1\ge \cdots \ge \kappa_n$. Consequently, $\left(F^{ij}\right)$ is also diagonal with $F^{ii}=H-\kappa_i$.  From the similar computation as in Section \ref{Section: Doubling ineq}, we have from \eqref{eq: deravaties of W} that
       \begin{equation}
            W_i = W^2 \innerproduct{e_i, E_{n+1}} \kappa_i \quad \text{and}\quad \sum_{i} F^{ii}W_{ii} = \dfrac{2}{W} \sum_{i} F^{ii}W_{i}^2 + W\sum_{i} F^{ii}\kappa_i^2.
        \end{equation}
        Therefore, 
        \begin{align*}
            F^{ij}\alpha_{ij}&= \dfrac{W}{\alpha}F^{ij}W_{ij} - \dfrac{1}{\alpha^3} F^{ij}W_{i}W_{j}\\
            &= \dfrac{W}{\alpha} \left( \dfrac{2}{W}\sum_{i}F^{ii}W_{i}^2 + W\sum_{i}F^{ii}\kappa_i^2 \right)- \dfrac{1}{\alpha^3} \sum_i F^{ii}W_{i}^{2}\\
            &\ge \dfrac{W^2}{\alpha} \sum_{i} F^{ii}\kappa_i^2 - \dfrac{W^4}{\alpha^3}\sum_{i} F^{ii}\innerproduct{e_i, E_{n+1}}^2 \kappa_i^2\\
            &= \dfrac{W^2}{\alpha}\sum_i F^{ii}\kappa_{i}^2 \left( 1- \dfrac{W^2}{\alpha^2} \innerproduct{e_i, E_{n+1} }^2 \right)\ge 0.
        \end{align*}
        In the last inequality, we used the fact that
        \[ \innerproduct{e_i, E_{n+1}}^2 \leq |E_{n+1}^{\top}| = 1-\innerproduct{\nu, E_{n+1}}^2 = 1-\dfrac{1}{W^2} = \dfrac{\alpha^2}{W^2}. \]
        \hfill\qed

    Next, we need to establish a gradient estimate of the form \eqref{eq: grad est 5.13} for $u$ subtracting its linear part. It should be noted that, after subtracting a linear function $l$ from $u$, $g=u-l$ no longer satisfies the scalar curvature equation. To overcome this, we rotate the graph $\Sigma=(x,u(x))$ in $\bb{R}^{n+1}$ such that $Du$ vanishes at a given point, thus eliminating the linear function to be subtracted.

    We also note that after rotation, $\Sigma$ is not necessarily still globally graphical; it is only locally graphical. Therefore, we need to introduce the modulus of continuity of $Du$ to estimate the size of the domain where $\Sigma$ remains graphical.

    \begin{proposition}\label{PROP: grad est for g=u-l}
         Let $\Sigma=(x,u(x))$ be a smooth $2$-convex graph over $B_1\subset\bb{R}^n$ satisfying 
        \[ \sigma_{2}(\kappa)=1\quad \text{on}\quad  B_1. \]
        Suppose that $u$ has an a priori gradient bound $\|Du\|_{L^{\infty}(B_1)}=\Gamma$. For any fixed $x_0\in B_1$, let $g(x) = u(x) - u(x_0) - Du(x_0)\cdot (x-x_0) $. Then we have the gradient estimate:
        \begin{equation}
            \sup_{B_{r}(y)} |Dg| \leq  \dfrac{C_1}{r} \underset{ B_{C_2 r}(y) }{\mathrm{osc}} g,\quad \text{ for any small ball}\ B_{r}(y)\subset B_{r_0},
        \end{equation}
        where $C_1 , C_2$ are universal constants depending on $n$ and $\Gamma$, and $r_0>0$ is a small constant depending on $n,\Gamma$ and the modulus of continuity of $Du$.
    \end{proposition}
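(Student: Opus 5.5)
The idea is to rotate $\Sigma$ so that the tangent plane at $X_0=(x_0,u(x_0))$ becomes horizontal, apply Proposition \ref{PROP: linear dependent grad est} to the rotated graph, and transfer the estimate back. Let $R\in SO(n+1)$ be the rotation that maps the upward normal $-\nu(X_0)$ to $E_{n+1}$ and fixes the orthogonal complement of $\mathrm{span}\{\nu(X_0),E_{n+1}\}$. Then $R(\Sigma-X_0)$ is, near the origin, a graph $(z,w(z))$ with $w(0)=0$ and $Dw(0)=0$. Its rotation angle $\theta$ satisfies $\tan\theta=|Du(x_0)|\le\Gamma$. Since $\sigma_2(\kappa)$ and $2$-convexity are invariant under rigid motions (with the normal orientation preserved), $w$ solves $\sigma_2(\kappa)=1$ and is $2$-convex wherever it is defined.

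\textbf{Step 1 (graphicality region).} I need a radius $\rho_0$ such that $R(\Sigma-X_0)$ is a graph over $B_{\rho_0}$ with $|Dw|\le 1$ there. The normal of the rotated surface at the point over $z$ is $R\nu(x)$. Its angle with $-E_{n+1}$ equals the angle between $\nu(x)$ and $\nu(x_0)$, which is bounded by $C|Du(x)-Du(x_0)|\le C\,\omega_{Du}(|x-x_0|)$.

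Choose $\delta$, depending on $\omega_{Du}$, so that this angle stays below $\pi/4$ for $|x-x_0|<\delta$. The projection of the rotated patch to the horizontal plane is then a local diffeomorphism with uniformly controlled Jacobian. It is injective, since a nearly horizontal patch over a convex parameter region cannot fold. Its image contains a ball $B_{c\delta}$ with $c=c(n,\Gamma)$, because horizontal distances are distorted by at most a factor depending on $\Gamma$. Hence $\rho_0=c\delta$ and $\|Dw\|_{L^\infty(B_{\rho_0})}\le1$.

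\textbf{Step 2 (apply the refined estimate).} Apply Proposition \ref{PROP: linear dependent grad est} to $w$ on $B_{\rho_0}$, rescaled to unit size, with gradient bound $1$ and $f_0=1$. This gives $\sup_{B_{s/2}(z)}|Dw|\le \frac{C(n)}{s}\,\mathrm{osc}_{B_s(z)}w$ for every $B_s(z)\subset B_{\rho_0}$.

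\textbf{Step 3 (transfer back to $g$).} The key observation is that subtracting the tangent plane $l(x)=u(x_0)+Du(x_0)\cdot(x-x_0)$ and rotating by $R$ are bi-Lipschitz equivalent descriptions of vertical height above that plane. For a point $X=(x,u(x))$, the vertical height over the tangent plane is $g(x)$, while its signed distance to the plane is $\langle X-X_0,-\nu(X_0)\rangle=g(x)/W(x_0)$. So $w(z)=g(x)/W(x_0)$, where $z$ is the projection of $R(X-X_0)$ onto the horizontal plane. The map $\Phi:x\mapsto z$ satisfies $c_1|x-x'|\le|\Phi(x)-\Phi(x')|\le C_1'|x-x'|$ with constants depending on $\Gamma$, because both surfaces have bounded slopes.

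Differentiating $g=W(x_0)\,w\circ\Phi$ gives $|Dg(x)|\le C(\Gamma)|Dw(\Phi(x))|$. Given $B_r(y)\subset B_{r_0}$ with $r_0$ small relative to $\rho_0$, let $s=C_2' r$. Then $\Phi(B_r(y))\subset B_{s/2}(\Phi(y))$, and the preimage satisfies $\Phi^{-1}(B_s(\Phi(y)))\subset B_{C_2r}(y)$. Combining these with Step 2 yields
\[
\sup_{B_r(y)}|Dg|\le \frac{C}{s}\,\mathrm{osc}_{B_s(\Phi(y))}w\le \frac{C_1}{r}\,\mathrm{osc}_{B_{C_2r}(y)}g .
\]
For this I need $r_0$ small enough that $B_{C_2r_0}(x_0)$ lies inside the parameter region of Step 1; this is where the dependence of $r_0$ on $\omega_{Du}$ enters. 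The balls $B_r(y)\subset B_{r_0}$ should be taken centered near $x_0$, presumably $B_{r_0}=B_{r_0}(x_0)$ after translation.

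\textbf{Main obstacle.} The delicate step is Step 1: proving that the rotated surface is a single-valued graph over a ball of radius comparable to $\delta$, with bounded gradient, using only $\omega_{Du}$ and $\Gamma$. I would handle it through the inverse function theorem applied to the smooth map $x\mapsto\pi(R(X(x)-X_0))$, where $\pi$ is horizontal projection. Its differential is close to that of the linear map obtained at $x_0$, which is invertible with bounds depending on $\Gamma$. The deviation is controlled by $\omega_{Du}$, and a quantitative inverse function theorem gives injectivity and a ball in the image of the stated size.
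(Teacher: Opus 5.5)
Your proposal is correct and follows essentially the same route as the paper. Both use the same planar rotation (in the plane of $E_{n+1}$ and $Du(x_0)$) that makes the tangent plane at $x_0$ horizontal, graphicality on a scale set by $\omega_{Du}$, the identity $\widetilde u = g/W(x_0)$, Proposition \ref{PROP: linear dependent grad est} applied to the rotated graph, and a transfer back through the bi-Lipschitz change of variables. The only difference is in injectivity: the paper uses the mean value theorem along the $E_n$-direction, since the rotation fixes $x_1,\dots,x_{n-1}$ and $\cos\theta_0+\sin\theta_0 u_n(x^*)=W(x^*)\,\nu(x_0)\cdot\nu(x^*)\ge 1/2$, which makes your ``cannot fold'' heuristic rigorous; your quantitative inverse function theorem also works.
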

    \begin{proof}
        By translation and subtracting a constant from $u$, we may assume without loss of generality that $x_0=0$ and $u(0)=0$. Let $\{E_1, \cdots, E_n, E_{n+1}\} $ denote the standard orthonormal coordinate of $\bb{R}^{n+1}$. If $|Du(0)|=0$, there is nothing to prove. We may therefore assume that $|Du(0)|>0$. By rotating the horizontal basis $\{ E_1, \cdots, E_n \}$ appropriately, we may further assume that $Du(0)= |Du(0)| E_n$ with $ |Du(0)| = \tan\theta_0$  for some $\theta_0\in \left( 0, \frac{\pi}{2} \right)$.
        Throughout this proof, $C=C(n,\Gamma)$ denotes the universal constant which may change line by line. 
        
        We now perform a counterclockwise rotation of the coordinate plane $\{E_n , E_{n+1}\}$  by angle $\theta_0$, such that the tangent plane of $\Sigma$ at the origin is horizontal in the new coordinate. Precisely, the new orthonormal coordinate is given by:
        \begin{align*}
            \begin{cases}
                \widetilde{E}_i = E_i \qquad \text{for}\ i=1,\cdots, n-1;\\
                \widetilde{E}_n = \cos\theta_0 E_n + \sin \theta_0 E_{n+1};\\
                \widetilde{E}_{n+1}= -\sin\theta_0 E_n + \cos\theta_0 E_{n+1}.
            \end{cases}
        \end{align*}
        where $(\cos\theta_0, \sin\theta_0)=\left( \frac{1}{W(0)}, \frac{|Du(0)|}{W(0)} \right)$.
        \begin{claim}
            There exists a small constant $r_0>0$ depending on $\omega_{Du}$, i.e., the modulus of continuity of $Du$, such that $\Sigma\cap (B_{r_0}\times \bb{R})$ remains graphical under this new coordinate.
        \end{claim}
        \noindent\emph{Proof of Claim.} By the continuity of $Du$, there exists $r_0>0$ depending on $\omega_{Du}$, such that $\nu\cdot \nu(0)\ge 1/2$ in $B_{r_0}$. Under the new rotated coordinate, the hypersurface $\Sigma\cap (B_{r_0}\times \bb{R})$ admits the parametrization:
        \begin{align*}
            \begin{cases}
                \widetilde{x}_i = x_i \qquad \text{for}\ i=1,\cdots, n-1;\\
                \widetilde{x}_n = \cos\theta_0 x_n + \sin\theta_0 u(x);\\
                \widetilde{x}_{n+1} = -\sin\theta_0 x_n + \cos \theta_0 u(x).
            \end{cases}
        \end{align*}
        To prove the graphicality of $\Sigma\cap (B_{r_0}\times \bb{R})$, it suffices to rule out the existence of two distinct points on $\Sigma\cap (B_{r_0}\times \bb{R})$ with identical horizontal coordinates, i.e., pairs $(\widetilde{x}, \widetilde{x}_{n+1}), (\widetilde{x}', \widetilde{x}_{n+1}')\in \Sigma\cap (B_{r_0}\times \bb{R})$ with $\widetilde{x}= \widetilde{x}'$, but $\widetilde{x}_{n+1}\neq \widetilde{x}_{n+1}'$. 
        
        Suppose for contradiction that such pairs exist, corresponding to points $(x,u(x))$ and $(x', u(x'))$ under the original coordinate, respectively. From $\widetilde{x}=\widetilde{x}'$ and the mean value theorem, we deduce that
        \begin{equation}\label{eq: compute the dist change 5.19}
            \begin{split}
                 0= |\widetilde{x}-\widetilde{x}'|&=|\cos\theta_0 (x_n-x_n') + \sin\theta_0 (u(x)-u(x')) |\\
                &= |\cos\theta_0 (x_n-x_n') + \sin\theta_0 Du(x^*)\cdot (x-x')  |\\
                &= |\cos\theta_0 + \sin\theta_0 u_n(x^*)||x_n-x_n'|.
            \end{split}
        \end{equation}
        Using the definition of $\theta_0$, we compute:
        \begin{align*}
            \cos\theta_0 + \sin\theta_0 u_n(x^*)  
            &= \dfrac{1}{W(0)} (1+ Du(0)\cdot Du(x^*))\\
            &= W(x^*) (\nu(0)\cdot \nu(x^*)) \\
            &\ge \dfrac{1}{2}.
        \end{align*}
        It follows that $x=x'$, this gives
        \[ \widetilde{x}_{n+1}= -\sin\theta_0 x_n + \cos\theta_0 u(x)= -\sin\theta_0 x_n' + \cos\theta_0 u(x')= \widetilde{x}_{n+1}', \]
        which contradicts our assumption of distinct points. The graphicality of $\Sigma\cap (B_{r_0}\times \bb{R})$ follows. \hfill\#

        We now denote the new graph function by $\widetilde{u}(\widetilde{x})$, which satisfies
        \[ \widetilde{u}(\widetilde{x})= -\sin\theta_0 x_n + \cos\theta_0 u(x) = \dfrac{1}{W(0)} (u(x)- |Du(0)|x_n)= \dfrac{1}{W(0)} g(x). \]
        Next, we compute the relation between derivatives of $u$ and $\widetilde{u}$. By the chain rule, for $i=1,\cdots, n-1$,
        \begin{align*}
            \widetilde{u}_i = \dfrac{\partial \widetilde{u}}{\partial \widetilde{x}_i} = \sum_{k=1}^{n} \dfrac{\partial \widetilde{u}}{\partial x_k}\dfrac{\partial x_k}{\partial \widetilde{x}_i}= \dfrac{\partial \widetilde{u}}{\partial x_i} + \dfrac{\partial \widetilde{u}}{\partial x_n}\dfrac{\partial x_n}{\partial \widetilde{x}_i}= \cos\theta_0 u_i + (\cos\theta_0 u_n -\sin\theta_0)(-\sin\theta_0\widetilde{u}_i),
        \end{align*}
        and
        \begin{align*}
            \widetilde{u}_n = \dfrac{\partial \widetilde{u}}{\partial \widetilde{x}_n} = \sum_{k=1}^{n} \dfrac{\partial \widetilde{u}}{\partial x_k}\dfrac{\partial x_k}{\partial \widetilde{x}_n}=  \dfrac{\partial \widetilde{u}}{\partial x_n}\dfrac{\partial x_n}{\partial \widetilde{x}_n} = (-\sin\theta_0 + \cos\theta_0 u_n) (\cos\theta_0 - \sin\theta_0 \widetilde{u}_n).
        \end{align*}
        Rearranging these equations to solve $\widetilde{u}_i$ and $\widetilde{u}_n$, we obtain
        \begin{equation}\label{eq: relation bet derivatives}
            \widetilde{u}_i = \dfrac{u_i}{\cos\theta_0 + \sin\theta_0 u_n} \quad\text{and}\quad  \widetilde{u}_n = \dfrac{\cos\theta_0 u_n -\sin\theta_0}{\cos\theta_0 + \sin\theta_0 u_n}.
        \end{equation}
        Note that $\cos\theta_0 + \sin\theta_0 u_n = W(\nu\cdot \nu(0)) \approx 1$, and $\cos\theta_0 u_n -\sin\theta_0 = \frac{1}{W(0)} \left( u_n -|Du(0)| \right)$. We deduce that
        \[ |D\widetilde{u}(\widetilde{x})| \approx |Du(x)- Du(0)|= |Dg(x)|\quad \text{for}\ x\in B_{r_0}. \]
        In particular, $|D\widetilde{u}(\widetilde{x})|\leq C$ on $\widetilde{x}(B_{r_0})$. 

        From the computation \eqref{eq: compute the dist change 5.19}, we know that $C^{-1}|x-x'|\leq |\widetilde{x}-\widetilde{x}'| \leq C |x-x'| $ for all $x,x'\in B_{r_0}$, which means that the coordinate change map $x\mapsto \widetilde{x}$ is a bi-Lipschitz homeomorphism from $B_{r_0}$ to $\widetilde{x}(B_{r_0})$ with universal Lipschitz constants. Thus, for any ball $B_{r}(y)\subset B_{r_0}$,  we have the inclusion
        \[ \widetilde{B}_{r/C}(\widetilde{y}) \subset \widetilde{x}(B_r(y))\subset \widetilde{B}_{Cr}(\widetilde{y}). \]
        where $\widetilde{y}=\widetilde{x}(y)$ denotes the image of $y$ under the coordinate change, and $\widetilde{B}_{r}(\widetilde{y})$ denotes the ball in the $\widetilde{x}$-coordinate.
        
        Note that $\widetilde{u}$ still satisfies the scalar curvature equation on $\widetilde{B}_{r_0/C}\subset \widetilde{x}(B_{r_0})$. For any small ball $B_{r}(y)\subset B_{\frac{r_0}{2C^2}}$, we have $|\widetilde{y}|\leq C|y|\leq \frac{r_0}{2C}$ by the bi-Lipschitz bound. Combined with $r<\frac{r_0}{2C^2}$, this gives $\widetilde{x}(B_{r}(y))\subset \widetilde{B}_{Cr}(\widetilde{y})\subset \widetilde{B}_{r_0/C}$. Applying the rescaled version of Proposition \ref{PROP: linear dependent grad est} to $\widetilde{u}$ on $\widetilde{B}_{Cr}(\widetilde{y})$, we obtain
        \[\sup_{\widetilde{B}_{Cr}(\widetilde{y})}|D\widetilde{u}|\leq \dfrac{C}{r}\underset{\widetilde{B}_{2Cr}(\widetilde{y})}{\mathrm{osc}}\,\widetilde{u}.\]
        Returning to the original coordinate, we conclude that
        \[ \sup_{B_{r}(y)}|Dg|\leq \dfrac{C}{r}\underset{B_{2C^2r}(y)}{\mathrm{osc}}\, g,\quad \text{for all small balls}\ B_{r}(y)\subset B_{\frac{r_0}{2C^2}} . \]
    \end{proof}

    \begin{corollary}\label{COR: Lip est}
        Under the hypotheses of Proposition \ref{PROP: grad est for g=u-l}, the gradient estimate can be improved to 
        \[ \sup_{ \substack{x,y\in B_{r}(x_0) \\ x\neq y} } d_{x,y}^{n+1} \dfrac{|g(x)-g(y)|}{|x-y|} \leq C \int_{B_{r}(x_0)}|g|, \quad \text{for all small}\ r<r_0, \]
        where $d_x=\mathrm{dist}(x, \partial B_r(x_0))= r-|x-x_0|$, $d_{x,y}=\min\{d_x, d_y\}$,  $C$ is a positive constant depending on $n$ and $\Gamma$, and $r_0>0$ is a small constant depending on $n,\Gamma$ and the modulus of continuity of $Du$.
    \end{corollary}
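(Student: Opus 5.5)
The plan is to derive the weighted Lipschitz bound from the local gradient estimate of Proposition~\ref{PROP: grad est for g=u-l}, and then to trade the oscillation on its right-hand side for an $L^1$ norm by an interpolation (absorption) argument. This is the standard Trudinger-type route, and it works only because the dependence on $\mathrm{osc}\, g$ is linear.

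\emph{Step 1 (pointwise weighted bound).} Fix $r<r_0$ (after shrinking $r_0$ so that Proposition~\ref{PROP: grad est for g=u-l} applies on all balls inside $B_r(x_0)$). Take $x\in B_r(x_0)$ and set $\rho=d_x/(2C_2)$. Then $B_{C_2\rho}(x)\subset B_r(x_0)$, and Proposition~\ref{PROP: grad est for g=u-l} gives
\[
|Dg(x)|\le \frac{C_1}{\rho}\,\underset{B_{C_2\rho}(x)}{\mathrm{osc}}\, g \le \frac{C}{d_x}\sup_{B_{d_x/2}(x)}|g| .
\]
Define the seminorm
\[
\Phi:=\sup_{x\in B_r(x_0)} d_x^{\,n+1}|Dg(x)|.
\]
The estimate for the quotient $|g(x)-g(y)|/|x-y|$ weighted by $d_{x,y}^{n+1}$ then follows from a bound on $\Phi$, up to a constant. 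If $|x-y|\le d_{x,y}/2$, integrate $Dg$ along the segment: every point $z$ on it satisfies $d_z\ge d_{x,y}/2$. If $|x-y|>d_{x,y}/2$, bound the quotient by $2\bigl(|g(x)|+|g(y)|\bigr)/d_{x,y}$ and use the weighted sup bound in Step 2.

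\emph{Step 2 (interpolation).} For $z\in B_{d_x/2}(x)$ we have $d_z\ge d_x/2$. Pick a small $\delta\in(0,1/4)$. Average $g$ over $B_{\delta d_x}(z)$ and control the deviation by the gradient:
\[
|g(z)|\le \fint_{B_{\delta d_x}(z)}|g| + \delta d_x\sup_{B_{\delta d_x}(z)}|Dg|
\le C\delta^{-n}d_x^{-n}\int_{B_r(x_0)}|g| + C\delta\, d_x\, d_x^{-(n+1)}\Phi .
\]
Inserting this into Step 1 and multiplying by $d_x^{n+1}$ gives
\[
d_x^{\,n+1}|Dg(x)|\le C\delta^{-n}\int_{B_r(x_0)}|g| + C\delta\,\Phi .
\]
Take the supremum over $x$ and choose $\delta$ with $C\delta\le 1/2$. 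Since $\Phi<\infty$ ($g$ is smooth and $d_x\le r$), we can absorb and obtain $\Phi\le C\int_{B_r(x_0)}|g|$. The same averaging bound yields $d_x^{\,n}|g(x)|\le C\int|g| + C\Phi$, which handles the far case in Step 1.

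\emph{Main obstacle.} The delicate point is bookkeeping of the constant $C_2$: Proposition~\ref{PROP: grad est for g=u-l} controls the gradient on $B_r(y)$ only through the oscillation on the larger ball $B_{C_2r}(y)$. The radii $\rho\sim d_x/C_2$ and $\delta$ must therefore be chosen so that every ball used stays inside $B_r(x_0)$ and inside the region $B_{r_0}$ where the rotated graph exists. It must also be checked that the constants in the comparison $d_z\gtrsim d_x$ depend only on $n$ and $\Gamma$, so that the absorption constant is independent of $r$. The dependence on $\omega_{Du}$ enters only through $r_0$.
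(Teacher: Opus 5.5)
Your proof is correct and follows essentially the paper's route: you apply Proposition \ref{PROP: grad est for g=u-l} on balls of radius comparable to $d_x/C_2$ to bound the weighted Lipschitz quantity by a weighted sup of $|g|$, and then absorb using an interpolation with $\int_{B_r(x_0)}|g|$. The only differences are cosmetic. You prove the interpolation inline by averaging and absorb in $\sup_x d_x^{\,n+1}|Dg(x)|$, whereas the paper cites Trudinger--Wang's Lemma 2.6 and absorbs directly in $[g]^{(n)}_{0,1;r}$. Your near/far case split is also unnecessary: $d_z=r-|z-x_0|$ is concave, so the whole segment from $x$ to $y$ already satisfies $d_z\ge d_{x,y}$.
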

    \begin{proof}
        By translation, it suffices to assume that $x_0=0$. We first introduce some notations from \cite[p. 585]{Trudinger-Wang}. For a continuous function $u$ on $B_1$ and any $r\in (0,1)$, define the weighted interior norms and semi-norms:
        \[ |u|_{0;r}^{(n)}:= \sup_{x\in B_r} d_x^n |u(x)|, \quad [u]_{0,1; r}^{(n)}= \sup_{\substack{x,y\in B_r\\ x\neq y}}d_{x,y}^{n+1}\dfrac{|u(x)-u(y)|}{|x-y|}. \]
        There is an interpolation inequality \cite[Lemma 2.6]{Trudinger-Wang} between these norms: for any $\varepsilon>0$, we have
        \begin{equation}\label{eq: interpolation}
            |u|_{0;r}^{(n)} \leq \varepsilon [u]_{0,1;r}^{(n)} + C(n)\varepsilon^{-n} \int_{B_r}|u|.
        \end{equation}

        Now fix any $r<r_0$ and distinct points $x,y\in B_r$, and denote $d=d_{x,y}.$ For $t\in[0,1]$, set $z_t= tx+(1-t)y$, then $B_{d/2}(z_t)\subset B_{r-d/2}$. By the fundamental theorem of calculus, we have
        \[ g(x)-g(y)= \int_{0}^{1}\dfrac{\mathrm{d}}{\mathrm{d}t} g(z_t) \,\mathrm{d}t = \int_{0}^{1} Dg(z_t)\cdot (x-y)\,\mathrm{d}t. \]
        Applying Proposition \ref{PROP: grad est for g=u-l} on $B_{\frac{d}{2C_2}}(z_t)$ yields
        \[ |Dg(z_t)|\leq \sup_{B_{\frac{d}{2C_2}}(z_t)}|Dg| \leq \dfrac{C_1}{d}\sup_{B_{d/2}(z_t)}|g| \leq \dfrac{C_1}{d} \sup_{B_{r-d/2}}|g|. \]
        where $C_1, C_2$ are universal constants consistent with Proposition \ref{PROP: grad est for g=u-l}.
        Therefore, 
        \[ d^{n+1}\dfrac{|g(x)-g(y)|}{|x-y|}\leq C_{1} d^n \sup_{B_{r-d/2}}|g| \leq C_1 |g|_{0;r}^{(n)}. \]
        Taking the supremum over $x,y$ and using the interpolation inequality \eqref{eq: interpolation}, we have for any $\varepsilon>0$ that
        \[ [g]_{0,1;r}^{(n)}\leq C_1|g|_{0;r}^{(n)} \leq C_1 \left( \varepsilon [g]_{0,1;r}^{(n)} + C(n)\int_{B_r}|g| \right). \]
        Choosing $\varepsilon=\frac{1}{2C_1}$, we complete the proof.
    \end{proof}

\section{Proof of curvature estimates}\label{Section: Proof of Curvature Estimates}

\noindent\emph{Proof of Theorem \ref{THM: curvature est}.} \emph{Step 1.} If the curvature estimate \eqref{eq: curvature est} were false, there would exist a sequence of graphs $\Sigma_k = (x, u_k(x))$ such that:

$(i)$ $\Sigma_k$ is a smooth $2$-convex graph satisfying the scalar curvature equation \eqref{eq: scalar curvature eq} on $B_1\subset \bb{R}^4$;

$(ii)$ $\|u_k\|_{C^1(B_1)}\leq A$ and each $Du_k$ has a uniform modulus of continuity $\omega$;

$(iii)$ but the curvature $|\kappa(u_k)(0)|\to\infty$ as $k\to\infty$. 

By $(ii)$, applying the Arzela-Ascoli theorem, up to a subsequence, $u_k$ converges to a $C^1$ function $u$ in $C^{1}_{\mathrm{loc}}(B_1)$. By the closedness of viscosity solutions (see \cite{CCbook}), $u$ is a viscosity solution to \eqref{eq: scalar curvature eq}. Since $u$ is $C^1$, hypothesis $(\mathrm{H}1)$ of Proposition \ref{PROP: general Alexandrov} holds trivially.  By Proposition \ref{PROP: Hessian as Radon measures},  hypothesis $(\mathrm{H}2)$ is also satisfied. By Corollary \ref{COR: Lip est}, the Lipschitz estimate \eqref{eq: H3} holds uniformly for all $u_k$, passing to the limit, $(\mathrm{H}3)$ holds for $u$. Therefore, we conclude that $u$ is twice differentiable almost everywhere in $B_1$.

\vspace{0.3cm}
\emph{Step 2. Universally small gradient bound.} Up to subtracting a constant from $u$ and rotating the graph of $u$, we may assume that 
\[ u(0)= Du(0)= 0 \qquad \text{and}\qquad \sup_{B_{r}}|u|\leq \sigma(r) \quad \text{for all}\ r\leq r_0, \]
where $r_0$ is a small constant depending only on $u$, and $\sigma(r)$ is a modulus of order $o(r)$ as $r\to 0$.

Applying our linearly dependent gradient estimate (Proposition \ref{PROP: linear dependent grad est}) to $u_k$, we have for $r<r_0$ that
\[ \sup_{B_{\frac{r}{2}}}|Du_k| \leq \dfrac{C(4,A)}{r}\sup_{B_r}|u_k|\leq \dfrac{C}{r} \left( \sup_{B_r}|u-u_k|+\sigma(r) \right). \]
We fix a universally small $r_1>0$ such that $\sigma(r_1)/r_1 < \mu/2$, where $\mu$ is the dimensional constant appearing in the small gradient condition of Corollary \ref{COR: rescaled doubling}. For sufficiently large $k$, we can ensure that 
\[ \sup_{B_{\frac{r_1}{2}}}|Du_k|\leq \mu. \]
Thus, by Corollary \ref{COR: rescaled doubling}, the doubling inequality holds for $u_k$ uniformly in $B_{r_1/6}$. That is, for any $y\in B_{r_1/ 18}$ and $0<\rho < r_1/9$, we have
\begin{equation}\label{eq: 6.1}
    \sup_{B_{\frac{r_1}{6}}} H(u_k) \leq C(u,A, r_1, \rho) \sup_{B_{\rho}(y)} H(u_k).
\end{equation}
We emphasize that the constant $C$ is independent of $k$.

\vspace{0.3cm}
\emph{Step 3. Applying Savin's small perturbation theorem.} Define the fully nonlinear elliptic operator $G(M, p)= \sigma_{2}(g^{-1}h)-1$, where 
\[g^{-1}= (g^{ij})= \left( \delta_{ij} - \frac{p_i p_j}{1+|p|^2} \right) \quad  \text{and}\quad h=(h_{ij})= \left(\frac{M_{ij}}{\sqrt{1+|p|^2}}\right).\]
It is clear that $G(D^2 u, Du)= \sigma_{2}(\kappa)-1=0$. Moreover, the operator $G$ is uniformly elliptic in a neighborhood of $(0,0)\in \mathcal{S}^{4\times 4}\times \bb{R}^4$, and $G\in C^2$ with $|D^2 G|\leq C(4)$.

We fix a point $y\in B_{r_1/18}$ where $u$ is twice differentiable, and let $Q(x)$ be the quadratic Taylor polynomial of $u$ at $y$, so that $|u(x)-Q(x)|=o(|x-y|^2)$ near $y$.
By definition, $Q$ satisfies $G(D^2Q, DQ(y))=0$.

Let $v_k=u_k -Q$.  For small $r>0$, consider the rescaled function 
\[ \widetilde{v}_k(x)= \dfrac{1}{r^2} v_k (rx+y)\quad \text{for}\ x\in B_1. \]
Then $\widetilde{v}_k$ satisfies the equation
\begin{align*}
     G(D^2\widetilde{v}_k(x) + D^2Q, rD\widetilde{v}_k(x) + DQ(rx+y)) =0 .
\end{align*}
We now apply the generalized Savin's small perturbation theorem \cite{Savin, Fan} to $\widetilde{v}_k$. Consider the operator 
\[ \widetilde{G}(M,p,x)= G(M+D^2Q, rp + DQ(rx+y))- G(D^2Q, DQ(rx+y)). \]
By construction, $\widetilde{G}(0,0,x)\equiv0$ and $\widetilde{G}$ satisfies all hypotheses of generalized Savin's small perturbation theorem \cite[Theorem 1.7]{Fan}. Let $\delta, C$ be the universal constants from \cite[Theorem 1.7]{Fan}. Now $\widetilde{v}_k$ solves
\[ \widetilde{G}(D^2\widetilde{v}_k, D\widetilde{v}_k, x)=  -G(D^2Q, DQ(rx+y)):= f(x). \]
We verify the conditions of \cite[Theorem 1.7]{Fan}. Fix any $\alpha\in (0,1)$, we have
\begin{align*}
    \|\widetilde{v}_k\|_{L^{\infty}(B_1)} &\leq \dfrac{\|u_k-u\|_{L^{\infty}(B_{r}(y))}}{r^2} + \dfrac{\| u-Q \|_{L^{\infty}(B_{r}(y))}}{r^2}\\
    &\leq  \dfrac{\|u_k-u\|_{L^{\infty}(B_{r}(y))}}{r^2}  + \dfrac{o(r^2)}{r^2},
\end{align*}
as well as
\begin{equation*}
    |\widetilde{G}(M,p,x)-\widetilde{G}(M,p,x')|\leq C(4,Q) r|x-x'|, \quad \|f\|_{C^{0,\alpha}(B_1)} \leq C(4,Q) r^{\alpha}.
\end{equation*}
We first fix $\rho$ universally small enough, such that $o(\rho^2)/\rho^2 + C(4,Q)\rho^{\alpha} <\delta/2$. Then for sufficiently large $k$, we can ensure that
\[ |\widetilde{G}(M,p,x)-\widetilde{G}(M,p,x')|\leq \delta|x-x'|^{\alpha}, \]
and
\[ \|\widetilde{v}_k\|_{L^{\infty}(B_1)}\leq \delta,\quad \|f\|_{C^{0,\alpha}(B_1)}\leq \delta. \]
Hence, by \cite[Theorem 1.7]{Fan}, we obtain $\|\widetilde{v}_k\|_{C^{2,\alpha}(B_{1/2})}\leq C$, where $C$ is independent of $k$. This yields the uniform curvature bound:
\[  H(u_k) \leq C \ \text{on}\ B_{\rho/2}(y)\ \text{uniformly in}\ k. \]

\vspace{0.3cm}
\emph{Step 4.} Applying the doubling inequality \eqref{eq: 6.1}, we obtain
\[ \sup_{B_{\frac{r_1}{6}}} H(u_k)\leq C, \]
where $C$ is independent of $k$. This contradicts the curvature blow-up assumption $(iii)$ that $|\kappa(u_k)(0)|\to \infty$ as $k\to \infty$. \hfill\qed.

\section{Alexandrov regularity revisited}\label{Section: Alexandrov Regularity Revisited}
The previously established curvature estimates depend on the modulus of continuity of the gradient. Hence, the regularity of $C^1$ viscosity solutions to \eqref{eq: scalar curvature eq} cannot be directly derived from these estimates via the standard approximation argument. Indeed, when solving the Dirichlet problem with smooth approximating boundary data, the resulting smooth approximating solutions do not necessarily admit a uniform modulus of continuity of the gradient. Therefore, the regularity of $C^1$ viscosity solutions to \eqref{eq: scalar curvature eq} remains a nontrivial issue.

In the previous proof of Alexandrov regularity, the modulus of continuity of the gradient was only used to estimate the size of the graphical domain in our rotation argument (see Section \ref{subsec: Linearly dependent gradient estimates}). In this section, we refine the rotation argument to establish the following general Alexandrov regularity for viscosity solutions to \eqref{eq: scalar curvature eq}.

\begin{proposition}\label{PROP: Alexandrov revisited}
    Let $\Sigma=(x,u(x))$ be a $2$-convex graph over $B_1\subset \bb{R}^n$, and let $u$ be a viscosity solution to \eqref{eq: scalar curvature eq} on $B_1$. Then $u$ is twice differentiable almost everywhere in $B_1$.
\end{proposition}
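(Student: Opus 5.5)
The plan is to verify the three hypotheses of Proposition \ref{PROP: general Alexandrov} for $u$ directly, without any modulus of continuity.

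\textbf{Hypotheses (H1) and (H2).} By Remark \ref{RMK: Lip reg for viscosity solu}, $u$ is locally Lipschitz, so (H1) holds. For (H2), fix a small ball $B_\rho(y)\subset B_1$ on which the boundary curvature condition of \cite{Ivochkina} holds. Solve the Dirichlet problem there with smooth boundary data approximating $u$, obtaining smooth $2$-convex solutions $u_k\to u$ locally uniformly. Korevaar's estimate (Proposition \ref{PROP: Korevaar grad est}) gives a uniform bound $\|Du_k\|_{L^\infty}\le\Gamma$ on slightly smaller balls. Proposition \ref{PROP: Hessian as Radon measures} then gives (H2) on each such ball, hence on $B_1$ by a covering.

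\textbf{Hypothesis (H3): the rotation problem.} The whole difficulty is to reprove Proposition \ref{PROP: grad est for g=u-l} and Corollary \ref{COR: Lip est} for the $u_k$ with a graphical radius $r_0$ depending only on $(n,\Gamma)$, and not on $\omega_{Du_k}$. Fix a point $x_0$ where $u$ is differentiable, and let $\theta_0=\arctan|Du(x_0)|\le\arctan\Gamma<\pi/2$.

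The basic observation is a global one. If a graph has slope at most $\Gamma'$, then all its tangent planes make an angle at most $\arctan\Gamma'$ with the horizontal. Rotating the graph in the $\{E_n,E_{n+1}\}$-plane by an angle $\theta$ with $\theta\le\frac12(\frac\pi2-\arctan\Gamma')$ therefore leaves it a graph over the whole image domain. This is the cone argument in \eqref{eq: compute the dist change 5.19}, using $\cos\theta+\sin\theta\, u_n\ge c(\Gamma')>0$ everywhere rather than only near $x_0$. Moreover, the coordinate change is bi-Lipschitz with constants depending only on $\Gamma'$, the new slope is bounded by some $\Gamma''(\Gamma')$, and the rotated function still solves \eqref{eq: scalar curvature eq}.

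\textbf{Iterative rotation.} I would split $\theta_0$ into finitely many steps $\theta_0=\theta^{(1)}+\dots+\theta^{(N)}$ and rotate the graph of $u_k$ one step at a time. After each step, the rotated graph is localized to a ball around the image of $x_0$ whose radius is a fixed fraction (depending on $n,\Gamma$) of the previous one.

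The point is to keep the slope from degenerating along the iteration. After the $j$-th rotation, the remaining tilt at $x_0$ is $\theta_0-\sum_{i\le j}\theta^{(i)}$. I would apply Proposition \ref{PROP: linear dependent grad est} on a small ball around $x_0$, which bounds the gradient of the rotated function there by $C(n,\Gamma_j)$ times its oscillation divided by the radius. For the limit $u$, differentiability at $x_0$ makes this oscillation equal to $\tan(\text{remaining tilt})\cdot r+o(r)$. For $k$ large this passes to $u_k$, since uniform convergence is preserved by the bi-Lipschitz rotations. Choosing $r$ small (depending on $x_0$, which is allowed in (H3), since $r_x$ may depend on $x$) yields a slope bound on the new smaller ball that depends only on $(n,\Gamma)$. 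That bound fixes the admissible size of the next rotation step.

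If the steps can be chosen so that $N=N(n,\Gamma)$, then after $N$ rotations the tangent plane at $x_0$ is horizontal and the final rotated function equals $W(x_0)^{-1}g$ up to a bi-Lipschitz change of variables. Its graphical domain contains a ball of radius $c(n,\Gamma)\,r_{x_0}$. Repeating the last part of the proof of Proposition \ref{PROP: grad est for g=u-l}, and then the interpolation argument of Corollary \ref{COR: Lip est}, gives \eqref{eq: H3} for $u_k$ with constants uniform in $k$. Passing to the limit gives (H3) for $u$, and Proposition \ref{PROP: general Alexandrov} concludes.

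\textbf{Main obstacle.} The hardest step is making sure the slope bounds $\Gamma_j$ do not blow up along the iteration. Each application of Proposition \ref{PROP: linear dependent grad est} carries a constant $C(n,\Gamma_j)$ that may exceed $1$, so the number of steps must be controlled independently of $k$ and of $\omega_{Du}$. If the naive choice fails, I would first try to exploit the fact that each step only needs a fixed angle $\theta_*(\Gamma_j)>0$. I would then aim to show that $\Gamma_j$ stays bounded by a quantity depending only on $(n,\Gamma)$, because on sufficiently small balls the rotated graph is nearly flat up to a tilt no larger than the original one.
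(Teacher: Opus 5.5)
There is a genuine gap, and it sits exactly at the step you call the main obstacle. Your treatment of (H1) and (H2) matches the paper, and so does the overall plan: reach a horizontal tangent plane at $x_0$ through finitely many rotations at scales independent of $\omega_{Du_k}$. The problem is the tool you use to control the slope after each rotation.

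\textbf{Why Proposition~\ref{PROP: linear dependent grad est} cannot close the iteration.} That proposition needs an a priori gradient bound for the rotated function, and its constant, as proved, grows with that bound.
\begin{itemize}
\item In your scheme the admissible step is $\theta^{(j)}\le\frac12\arctan(1/\Gamma_{j-1})\le 1/(2\Gamma_{j-1})$.
\item The next bound is only $\Gamma_j\le C(n,\Gamma''_{j-1})\bigl(2\tan(\theta_0-\phi_j)+o(1)\bigr)$, where $\phi_j=\sum_{i\le j}\theta^{(i)}$ and $\Gamma''_{j-1}\ge\Gamma_{j-1}$.
\item While the steps are small, the factor $\tan(\theta_0-\phi_j)$ barely decreases. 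So nothing in your argument stops $\Gamma_j$ from increasing without bound.
\item If that happens, $\sum_j\theta^{(j)}$ may converge to a value below $\theta_0$, and $N=N(n,\Gamma)$ remains unproved.
\end{itemize}
Your fallback heuristic does not help either. Differentiability at $x_0$ only makes the rotated graph $C^0$-close to a plane, not close in slope; slope closeness would be exactly the missing modulus of continuity. Converting $C^0$ closeness into a slope bound via Proposition~\ref{PROP: linear dependent grad est} brings back the unknown $\Gamma''_{j-1}$.

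\textbf{The missing idea.} Use Korevaar's estimate, Proposition~\ref{PROP: Korevaar grad est}, for the slope control inside the iteration.
\begin{itemize}
\item Rescaled to a ball of radius $r$, it bounds the slope on the half ball by $C(n)\exp\{C(n)(\mathrm{osc}/r)^2\}$, with no dependence on any prior gradient bound.
\item For the rotated graph near the image of $x_0$, differentiability gives $\mathrm{osc}/r\le 2\Gamma+o(1)$, because the remaining tilt never exceeds $\theta_0\le\arctan\Gamma$.
\item With this substitution your own scheme closes. Every $\Gamma_j$ is bounded by a fixed $\Gamma^\sharp(n,\Gamma)$, so each step angle is bounded below and $N(n,\Gamma)$ steps suffice.
\end{itemize}

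\textbf{How the paper uses the same fact.} It tracks only the one-sided quantity $\theta_r=\inf_{B_r}\arctan u_n$.
\begin{itemize}
\item It rotates by any $\beta<\frac{\pi}{2}+\theta_r$. This keeps $\Sigma$ graphical even though the new slope may be unbounded.
\item The modulus $\sigma$ shows that $\widetilde{x}(B_r)$ contains a comparable ball on which $\mathrm{osc}\,\widetilde{u}\le C(\Gamma)r$.
\item Korevaar then gives $|D\widetilde{u}|\le\tan\gamma(n,\Gamma)$ there. In original coordinates this reads $\theta_{\eta r}\ge\theta_r+(\frac{\pi}{2}-\gamma)$.
\item After at most $k_0=[\pi/(\pi/2-\gamma)]+1$ steps, the rotation by $\theta_0$ is valid on $B_{r^*}$ with $r^*\ge\eta^{k_0}$.
\item One more application of Korevaar supplies the a priori bound $\widetilde{\Gamma}$ needed to apply Proposition~\ref{PROP: linear dependent grad est} to $\widetilde{u}=g/W(0)$.
\end{itemize}
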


By Remark \ref{RMK: Lip reg for viscosity solu}, hypothesis $(\mathrm{H}1)$ holds. Moreover, we can construct a sequence of smooth approximating solutions, and Korevaar's gradient estimates (Proposition \ref{PROP: Korevaar grad est}) guarantee a uniform gradient bound for these solutions. Thus, hypothesis $(\mathrm{H}2)$ follows from Proposition \ref{PROP: Hessian as Radon measures}. It remains to verify hypothesis $(\mathrm{H}3)$. We first establish a priori linearly dependent gradient estimates analogous to Proposition \ref{PROP: grad est for g=u-l}, which do not depend on the modulus of continuity of $Du$.  

\begin{proposition}\label{PROP: gradient and Lip est revisited}
    Let $\Sigma=(x,u(x))$ be a smooth $2$-convex graph satisfying the scalar curvature equation \eqref{eq: scalar curvature eq} on $B_1\subset \bb{R}^n$. Suppose that $u$ has an a priori gradient bound $\|Du\|_{L^{\infty}(B_1)}=\Gamma$. For any fixed $x_0\in B_1$, let $g(x)= u(x)-u(x_0)- Du(x_0)\cdot (x-x_0)$. Assume that there exists a modulus $\sigma(r)= o(r)$ as $r\to 0$ such that $|g(x)|\leq \sigma(|x|)$.  Then, 
    
    $(i)$ there holds the gradient estimate:
    \begin{equation}
            \sup_{B_{r}(y)} |Dg| \leq  \dfrac{C_1}{r} \underset{ B_{C_2 r}(y) }{\mathrm{osc}} g,\quad \text{ for any small ball}\ B_{r}(y)\subset B_{r_0},
        \end{equation}
    where $C_1 , C_2$ and small $r_0>0$  are universal constants depending on $n, \Gamma$ and the modulus $\sigma$.

    (ii) Moreover, as in the proof of Corollary \ref{COR: Lip est}, we can improve the gradient estimate to
    \begin{equation}\label{eq: Lip estimate}
        \sup_{ \substack{x,y\in B_{r}(x_0) \\ x\neq y} } d_{x,y}^{n+1} \dfrac{|g(x)-g(y)|}{|x-y|} \leq C \int_{B_{r}(x_0)}|g|, \quad \text{for all small}\ r<r_0, 
    \end{equation}
    where $d_x=\mathrm{dist}(x, \partial B_r(x_0))= r-|x-x_0|$, $d_{x,y}=\min\{d_x, d_y\}$,  $C$ and small $r_0>0$ are universal constants depending on $n,\Gamma$ and the modulus $\sigma$.
\end{proposition}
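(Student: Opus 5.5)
The plan is to redo the rotation argument of Proposition \ref{PROP: grad est for g=u-l}, but to reach the tilted frame through finitely many small rotations instead of one rotation by $\theta_0$. We then only need to guarantee graphicality for a small-angle rotation, which requires only a gradient bound, not a modulus of continuity. As there, we normalize $x_0=0$, $u(0)=0$, $Du(0)=\tan\theta_0\,E_n$ with $\tan\theta_0\le\Gamma$, and rotate in the $\{E_n,E_{n+1}\}$ plane.

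\emph{Step 1 (one small rotation).} Suppose the current graph function $u_k$ is defined over a ball $B_{s_k}$ with $|Du_k|\le\Gamma_k$ there. Rotate by an angle $\varphi$ with $\tan\varphi\,\Gamma_k\le 1/2$. The computation \eqref{eq: compute the dist change 5.19} then gives $|\cos\varphi+\sin\varphi\,\partial_n u_k|\ge \cos\varphi/2$, since $|\partial_n u_k|\le\Gamma_k$. Hence the rotated surface is globally graphical over the image of $B_{s_k}$, and the coordinate change is bi-Lipschitz with constants depending on $\Gamma_k$. The new function $u_{k+1}$ is defined on a ball $B_{s_k/C(\Gamma_k)}$, and by \eqref{eq: relation bet derivatives} its gradient is bounded by $\Gamma_{k+1}\le(\Gamma_k+\tan\varphi)/(1-\Gamma_k\tan\varphi)\le C(\Gamma_k)$.

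\emph{Step 2 (keeping the slope under control).} Iterating Step 1 blindly lets $\Gamma_k$ grow and makes the number of steps uncontrolled. This is where the modulus $\sigma$ enters. After $k$ steps, with total angle $\theta_k$, the height function is
\[
u_k=\frac{\cos(\theta_0-\theta_k)}{\cos\theta_0}\Bigl(g+\tan(\theta_0-\theta_k)\,(\text{linear term in }x_n)\Bigr)
\]
up to the bi-Lipschitz change of variables. Thus $\mathrm{osc}_{B_r}u_k\le C\bigl(\sigma(Cr)+|\theta_0-\theta_k|\,r\bigr)$. Applying Proposition \ref{PROP: linear dependent grad est} to $u_k$ (which still solves $\sigma_2(\kappa)=1$) at scale $r$ gives
\[
\sup_{B_{r/2}}|Du_k|\le C(\Gamma_k)\bigl(\sigma(Cr)/r+|\theta_0-\theta_k|\bigr),
\]
so that on a ball of radius $s_{k+1}$, chosen depending on $\sigma$, the gradient of $u_k$ is at most $C|\theta_0-\theta_k|+1$. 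Therefore we may take a uniform step angle $\varphi\sim\min\{c,\ \theta_0-\theta_k\}$, and after $N=N(\Gamma)$ steps reach $\theta_N=\theta_0$. Each step shrinks the radius by a factor depending only on $n,\Gamma,\sigma$. The final radius $r_0=r_0(n,\Gamma,\sigma)$ is therefore independent of $\omega_{Du}$. The final graph function is $\widetilde u=g/W(0)$, read in bi-Lipschitz coordinates with universal constants. The analogue of Lemma \ref{LEMMA: rotation revisited} is exactly this statement.

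\emph{Step 3 (conclusion).} Once $\widetilde u$ is a graph over a ball of radius $\sim r_0$ with bounded gradient, and the map $x\mapsto\widetilde x$ is bi-Lipschitz, we argue verbatim as at the end of Proposition \ref{PROP: grad est for g=u-l}. Applying Proposition \ref{PROP: linear dependent grad est} to $\widetilde u$ on $\widetilde B_{Cr}(\widetilde y)$ and pulling back gives $(i)$. Part $(ii)$ then follows from $(i)$ exactly as in Corollary \ref{COR: Lip est}, using the Trudinger--Wang interpolation \eqref{eq: interpolation}.

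The main obstacle is Step 2. We must make sure the gradient bounds $\Gamma_k$ do not deteriorate along the iteration, so that the number of rotations and the shrinking factors stay universal. I would handle this by re-applying the linearly dependent estimate of Proposition \ref{PROP: linear dependent grad est} after every rotation, feeding in the oscillation bound coming from $\sigma$. If that turns out to be delicate, a fallback is to choose the step angle as $\varphi\le 1/(4\Gamma_k)$ and to track $\Gamma_k\le C\Gamma$ directly: away from $0$, the tilt toward the tangent plane at $0$ decreases the slope, while near $0$ the slope is small by $\sigma$.
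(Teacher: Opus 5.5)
The gap is in your Step 2, which you rightly flag as the crux. Proposition \ref{PROP: linear dependent grad est} presupposes an a priori gradient bound, and its constant $C(n,\Gamma_k)$ depends on exactly the quantity $\Gamma_k$ you are trying to keep under control. Its advantage is linearity in $\mathrm{osc}/r$, which helps only when $\mathrm{osc}/r$ is small. Here $\mathrm{osc}/r\sim|\theta_0-\theta_k|$ is of order one for most of the iteration, since initially $\theta_0$ may be close to $\arctan\Gamma$. So the output $C(\Gamma_k)|\theta_0-\theta_k|+1$ need not improve on $\Gamma_k$ at all.

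The only recursion you get is $\Gamma_{k+1}\lesssim C(\Gamma_k)$, which may grow geometrically or faster. The admissible step angles $\varphi\lesssim 1/\Gamma_k$ then become summable, and you need not ever reach $\theta_0$. Two smaller points: the constant $C$ in your oscillation bound silently accumulates the bi-Lipschitz constants of all earlier steps, and the height after a total rotation $\theta_k$ is actually $\cos\theta_k\, g(x)+\frac{\sin(\theta_0-\theta_k)}{\cos\theta_0}x_n$.

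The fallback rests on two false claims. First, $\sigma$ is only a $C^0$ modulus for $g$ and gives no pointwise control of $Dg$ near $0$; such control would be the modulus of continuity of $Du$ you are trying to avoid. Second, a tilt by $\phi$ turns the $x_n$-slope into $\tan(\arctan u_n-\phi)$. This steepens wherever $u_n<0$ and blows up as $\arctan u_n-\phi\to-\pi/2$. For $\Gamma>1$ and $\phi\le\theta_0$ this can happen at scales you do not control.

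The missing ingredient is an interior gradient estimate that needs no a priori gradient bound: Korevaar's estimate (Proposition \ref{PROP: Korevaar grad est}), which depends only on $\mathrm{osc}/r$. The computation in Claim \ref{CLAIM: claim 6} gives $|x|\le 2|\widetilde x|$ for $|x|\le\rho_0(\sigma)$ and every rotation angle in $[0,\theta_0]$. Together with $1/\cos\theta_0\le\sqrt{1+\Gamma^2}$, this shows every intermediate height function satisfies $\mathrm{osc}_{\widetilde B_s}\le C(\Gamma)s$ at small scales. Korevaar then resets its slope on $\widetilde B_{s/2}$ to a bound $\Gamma'(n,\Gamma)$ independent of $k$. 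With this in place of Proposition \ref{PROP: linear dependent grad est} in Step 2, your small-step scheme does close: the step angle, the number of steps, and the shrinking factor all depend only on $n,\Gamma,\sigma$.

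The paper uses Korevaar in the same role but iterates differently. It rotates by nearly the maximal admissible angle $\beta\uparrow\pi/2+\theta_r$, where $\theta_r=\inf_{B_r}\arctan u_n$. This rotation is graphical for free, even though the new slope may be unbounded. Korevaar at a smaller scale then bounds the new slope by $\tan\gamma$, which raises $\theta_r$ by the fixed amount $\pi/2-\gamma$. After at most $k_0$ such steps, the rotation by $\theta_0$ itself is valid on $B_{r^*}$, and from there your Step 3 matches the paper.
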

If $|Du(x_0)|=0$, this gradient estimate follows from Proposition \ref{PROP: linear dependent grad est}. In the case of $|Du(x_0)|\neq 0$, we again use the rotation argument to eliminate the linear function to be subtracted. The following lemma is the key step in our proof. It states that the rotation argument can proceed at a uniform scale.

\begin{lemma}\label{LEMMA: rotation revisited}
    Let $\Sigma=(x,u(x))$ be a smooth $2$-convex graph satisfying the scalar curvature equation \eqref{eq: scalar curvature eq} on $B_1\subset \bb{R}^n$. Assume that $\|Du\|_{L^{\infty}(B_1)}=\Gamma$, $u(0)=0$ and $Du(0)=|Du(0)|E_n $ with $|Du(0)|= \tan\theta_0$ for some angle $\theta_0\in \left( 0, \frac{\pi}{2} \right)$. Suppose that there exists a modulus $\sigma(r)= o(r)$ as $r\to 0$ such that
    \[ |u(x)- Du(0)\cdot x| = |u(x)-\tan \theta_0 x_n |\leq \sigma(|x|). \]
    Then,
    
    $(i)$ there exists a small universal constant $r^*= r^*(n,\Gamma, \sigma)>0$ such that $\Sigma\cap \left( B_{r^*}\times\bb{R} \right)$ remains graphical under the rotated coordinates:
    \begin{equation}\label{eq: rotated coordinates}
        \begin{split}
            \begin{cases}
            \widetilde{E}_i = E_i\quad \text{for}\ i=1,\cdots, n-1;\\
            \widetilde{E}_n = \cos\theta_0 E_{n}+ \sin\theta_0 E_{n+1};\\
            \widetilde{E}_{n+1} = -\sin\theta_0 E_n + \cos\theta_0 E_{n+1}.
            \end{cases}
        \end{split}
    \end{equation}

    $(ii)$ Denote the new graph function of $\Sigma$ in the rotated coordinates by $\widetilde{u}(\widetilde{x})$, then $\widetilde{u}$ is defined on $\widetilde{x}(B_{r^*})\supset \widetilde{B}_{r^*/2}$.
\end{lemma}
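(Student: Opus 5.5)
The plan is to replace the single rotation by angle $\theta_0$ in Proposition \ref{PROP: grad est for g=u-l} with a finite chain of rotations by small angles $\delta_1,\dots,\delta_N$ summing to $\theta_0$. Each step is set up so that graphicality is guaranteed by the current gradient bound alone, not by continuity of $Du$.

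\emph{One small rotation.} Suppose $v$ is a smooth $2$-convex solution on $B_s$ with $\|Dv\|_{L^\infty(B_s)}\le\Gamma'$. Rotate by angle $\delta$ in the $\{E_n,E_{n+1}\}$ plane as in \eqref{eq: rotated coordinates}. The computation \eqref{eq: compute the dist change 5.19} shows that two points with the same new horizontal coordinate satisfy $x_i=x_i'$ for $i<n$ and
\[
0=|\cos\delta+\sin\delta\,v_n(x^*)|\,|x_n-x_n'| .
\]
Since $\cos\delta+\sin\delta\,v_n\ge\cos\delta-\Gamma'\sin\delta\ge\frac12$ once $\tan\delta\le 1/(4\Gamma'+4)$, the rotated surface over $B_s$ is graphical. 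The same computation makes the map $x\mapsto\widetilde x$ bi-Lipschitz with constants depending only on $\Gamma'$. So the new graph function is defined on a ball of radius $s/C(\Gamma')$. By \eqref{eq: relation bet derivatives}, its gradient is bounded by $C(\Gamma')$. The new function is again a $2$-convex solution of \eqref{eq: scalar curvature eq}, because the equation is geometric.

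\emph{Keeping the gradient bound uniform.} Here the modulus $\sigma$ enters; this is the main obstacle. Composing the steps naively gives bounds $\Gamma_{k+1}=C(\Gamma_k)$, which could force $\delta_k\to0$ and infinitely many steps. To prevent this, after $k$ steps (total angle $\phi_k=\delta_1+\cdots+\delta_k$) I write the rotated function as
\[
\widetilde u_k(\widetilde x)=\tan(\theta_0-\phi_k)\,\widetilde x_n+\widetilde g_k(\widetilde x).
\]
The same algebra as in the single-rotation case gives $|\widetilde g_k(\widetilde x)|\le C\sigma(C|\widetilde x|)$. I then apply Proposition \ref{PROP: linear dependent grad est} to $\widetilde u_k$ on a ball $\widetilde B_{r}$, where $r$ is chosen using only $\sigma$, so that $\sigma(Cr)/r$ is tiny. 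The oscillation of $\widetilde u_k$ there is at most $2r\Gamma+C\sigma(Cr)$. This gives $|D\widetilde u_k|\le C(n,\Gamma_k)(\Gamma+1)$ on $\widetilde B_{r/2}$.

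That bound is still not uniform, since the constant depends on $\Gamma_k$. What I would try first instead is the geometric version: pass to unit normals. The normal of $\Sigma$ at every point lies within angle $\arctan\Gamma<\pi/2$ of $E_{n+1}$. Moreover, on the chosen small ball the tilted-plane approximation, combined with the Lipschitz bound, should keep the normal within angle $\frac12(\pi/2-\arctan\Gamma)+\theta_0-\phi_k$ of the current vertical direction $\widetilde E_{n+1}^{(k)}$. If so, every intermediate slope is bounded by a fixed $\Gamma^*(\Gamma)$, and a fixed step size $\delta(\Gamma^*)$ works throughout. Then $N\le\theta_0/\delta+1\le C(\Gamma)$, and the domain shrinks by at most a factor $C(\Gamma^*)^N$.

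\emph{Conclusion.} With $N$ and the per-step constants depending only on $(n,\Gamma,\sigma)$, the composite rotation is the rotation by $\theta_0$ in \eqref{eq: rotated coordinates}, and $\Sigma\cap(B_{r^*}\times\mathbb R)$ is graphical with $r^*=r^*(n,\Gamma,\sigma)$; this is (i). Graphicality of the composite can also be checked directly by one application of \eqref{eq: compute the dist change 5.19} once all intermediate slopes are known to be bounded. For (ii), the final bi-Lipschitz estimate $|\widetilde x-\widetilde x'|\ge c|x-x'|$, combined with $\widetilde x(0)=0$ and $|\widetilde x|\ge|x|/W(0)$-type comparisons along rays, shows $\widetilde x(B_{r^*})\supset\widetilde B_{r^*/2}$ after shrinking $r^*$ by a universal factor. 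This uses invariance of domain together with the fact that $\widetilde x(\partial B_{r^*})$ stays outside $\widetilde B_{r^*/2}$.
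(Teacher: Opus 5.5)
Your single-rotation step is sound and you locate the difficulty correctly. However, the step that is supposed to resolve it is only asserted (``should''), and the justification you give cannot work. The claim that on a small ball every normal lies within angle $\frac12(\pi/2-\arctan\Gamma)+\theta_0-\phi_k$ of $\widetilde{E}^{(k)}_{n+1}$ is pointwise information about $Du$. The $C^0$ tilted-plane bound $|u-\tan\theta_0x_n|\le\sigma(|x|)$ together with $|Du|\le\Gamma$ gives nothing pointwise beyond $|Du|\le\Gamma$. For instance, take $\theta_0=\pi/4$ and $u=x_n+x_n^2\sin(1/x_n)$, or a smooth version with the oscillation cut off below an arbitrarily small scale. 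It satisfies both bounds with $\sigma(r)=r^2$ and $\Gamma=4$, yet $u_n$ takes the values $0$ and $2$ at points arbitrarily close to $0$. So at $k=N$, where your bound says every normal is within $\frac12(\pi/2-\arctan 4)\approx 0.12$ of the normal line at $0$, it fails on every ball: some normals are vertical, at angle $\pi/4$. With an asymmetric oscillation one can even make $u_n$ dip to nearly $-\Gamma$ near $0$, and then the rotation by $\theta_0$ is not graphical on any ball once $\theta_0>\pi/2-\arctan\Gamma$. So the equation has to enter precisely at this point. The PDE tool you tried, Proposition \ref{PROP: linear dependent grad est}, presupposes a gradient bound and merely reproduces $\Gamma_{k+1}=C(\Gamma_k)$.

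The missing ingredient is Korevaar's estimate, Proposition \ref{PROP: Korevaar grad est}, which bounds the gradient by oscillation over radius alone, with no a priori gradient bound. You already have $\mathrm{osc}_{\widetilde{B}_r}\widetilde{u}_k\le 2\Gamma r+C\sigma(Cr)$. The rescaled Korevaar estimate then gives $|D\widetilde{u}_k|\le\Gamma^*:=C(n)\exp\{C(n)(2\Gamma+1)^2\}$ on $\widetilde{B}_{r/2}$, uniformly in $k$. That fixes the step $\delta=\delta(\Gamma^*)$, the number of steps $N\le C(n,\Gamma)$, and a fixed shrinking factor per step, so your scheme closes. You must also check that the constants in $|\widetilde{g}_k|\le C\sigma(C|\widetilde{x}|)$ depend only on $\Gamma$ and not on $\Gamma_k$. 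They do: the distance from a surface point $X$ to the tilted plane is at most $\sigma(|X|)$. Hence the vertical deviation in any intermediate frame is at most $\sqrt{1+\Gamma^2}\,\sigma(|X|)$, and $|X|\le 2(1+\Gamma)|\widetilde{x}|$ once $\sqrt{1+\Gamma^2}\,\sigma(t)\le t/2$.

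This is the paper's mechanism, organized differently. With $\theta_r=\inf_{B_r}\arctan u_n$, the paper rotates at scale $r$ by any $\beta<\pi/2+\theta_r$, the largest angle for which graphicality over $B_r$ is automatic. It then applies Korevaar on $\widetilde{B}_{\rho_0 r/2}$ to get a slope bound $\tan\gamma$, $\gamma=\gamma(n,\Gamma)$, in the rotated frame, and reads this back as $\theta_{\eta r}\ge\theta_r+(\pi/2-\gamma)$. Hence at most $[\pi/(\pi/2-\gamma)]+1$ iterations reach $\theta_{r^*}>\theta_0-\pi/2$. Your many small fixed steps versus the paper's few maximal ones is a cosmetic difference once Korevaar is in place.
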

\begin{proof}
    For any $0<r<1$, define
    \[  \Theta_{r}= \sup_{B_r} \arctan u_n \quad \text{and}\quad \theta_r=\inf_{B_r} \arctan u_n.    \]
    It suffices to show that there exists a universal constant $r^*= r^*(n,\Gamma, \sigma)$ such that $\theta_{r^*} > -\frac{\pi}{2} + \theta_0$. This implies that the downward rotation by angle $\theta_0$ is valid at the scale of $B_{r^*}$.

    Our idea is to first rotate $\Sigma$ by the largest possible angle at a larger scale. After this rotation, although the slope of $\Sigma$ may tend to $-\infty$ in the new coordinates, Korevaar's gradient estimate (Proposition \ref{PROP: Korevaar grad est}) guarantees that its slope remains bounded at a smaller interior scale. Hence, returning to the original coordinates, we obtain an improvement on the lower bound of $\theta_r$ at this smaller scale. Finally, by iterating this improvement finitely many times, we obtain the desired $r^*$. 

    \vspace{0.3cm}
    \emph{Step 1. First rotation.} We start at the scale of $B_1$. We assume that $\theta_1 \leq -\frac{\pi}{2} +\theta_0 $; otherwise, we are done. For any $\beta\in \left( 0, \frac{\pi}{2} + \theta_1 \right)$, we rotate the subspace $\{E_{n}, E_{n+1}\}$ counterclockwise by angle $\beta$. Namely, introducing the new coordinates:
    \[ \begin{cases}
            \widetilde{E}_i = E_i\quad \text{for}\ i=1,\cdots, n-1;\\
            \widetilde{E}_n = \cos\beta E{n} + \sin\beta E_{n+1};\\
            \widetilde{E}_{n+1} = -\sin\beta E_n + \cos\beta E_{n+1}.
    \end{cases} \]
    One can check that $\Sigma$ remains graphical under these new coordinates. Denoting the new graph function by $\widetilde{u}$, we next estimate the size of the domain of $\widetilde{u}$.

    \begin{claim}\label{CLAIM: claim 5}
        For any $0<r<1$, we have $\widetilde{x}(B_r)\subset \widetilde{B}_{c_0 r}$ with $c_0= \sqrt{2(1+\Gamma^2)}$.
    \end{claim}
    \noindent\emph{Proof of Claim.} This claim follows from a straightforward computation:
    \[ |\widetilde{x}|^2\leq \sum_{i=1}^{n-1}x_i^2 + (\cos\beta x_n + \sin\beta u(x))^2 \leq 2(|x|^2+ |u(x)|^2)\leq 2(1+\Gamma^2)|x|^2, \]
    where we used the fact that $|u(x)|\leq \Gamma |x|$, since $u(0)=0$ and $\|Du\|_{L^{\infty}(B_1)}=\Gamma$. \hfill\qed

    \begin{claim}\label{CLAIM: claim 6}
        There exists a small constant $\rho_0\in (0,1)$ depending only on $\sigma$ such that for all $r< \rho_0$, we have $\widetilde{B}_{r/2}\subset \widetilde{x}(B_{r})$.
    \end{claim}
    \noindent\emph{Proof of Claim.} For $|x|=r$, we have
    \begin{align*}
        |\widetilde{x}|^2 &= \sum_{i=1}^{n-1} x_i^2 + (\cos\beta x_n +\sin\beta u(x))^2\\
        &= \sum_{i=1}^{n-1} x_i^2 + (\cos\beta x_n +\sin\beta \tan\theta_0 x_n +\sin\beta (u(x)-\tan\theta_0 x_n))^2.
    \end{align*}
    Recalling the definition of the modulus $\sigma$, we have $|u(x)-\tan\theta_0 x_n|\leq \sigma(|x|)$. Using this, we obtain
    \[ |\widetilde{x}|^2 \ge \sum_{i=1}^{n-1} x_i^2 + \dfrac{1}{2} (\cos\beta + \sin\beta \tan \theta_0)x_n^2 - \sigma(r)^2. \]
    Note that $\beta < \theta_0$, we have $0< \theta_0-\beta < \theta_0< \frac{\pi}{2}$, and hence
    \[ \cos\beta + \sin\beta \tan\theta_0 = \dfrac{\cos\beta \cos\theta_0 + \sin\beta \sin\theta_0}{\cos\theta_0} = \dfrac{\cos (\theta_0 -\beta)}{\cos\theta_0} >1. \]
    Therefore,
    \[  |\widetilde{x}|^2 \ge \dfrac{r^2}{2} - \sigma(r)^2 = \dfrac{r^2}{4} \left[ 2- 4\left( \dfrac{\sigma(r)}{r} \right)^2 \right] \ge \dfrac{r^2}{4},  \]
    provided $r\leq \rho_0$ for some $\rho_0=\rho_0(\sigma)$. \hfill\qed
    
    Combining the above two claims, we have
    \begin{equation}\label{eq: relation of domains}
        \widetilde{B}_{r/2}\subset  \widetilde{x}(B_r) \subset \widetilde{B}_{c_0 r}, \quad \text{for any}\ r\leq \rho_0.
    \end{equation}
    
    \vspace{0.3cm}
    \emph{Step 2. Applying Korevaar's gradient estimate for $\widetilde{u}$.} After this rotation, the new graph function $\widetilde{u}$ solves the scalar curvature equation $\sigma_{2}(\kappa(\widetilde{u}))=1$ on $\widetilde{B}_{\rho_0/2} \subset \widetilde{x}(B_1)$. Applying the rescaled version of Korevaar's gradient estimate (Proposition \ref{PROP: Korevaar grad est}) to $\widetilde{u}$ on the ball $\widetilde{B}_{\rho_0/2}$, we obtain
    \begin{align*}
        \sup_{\widetilde{B}_{\rho_0/4}}|D\widetilde{u}| &\leq C(n) \exp \left\{ C(n) \left(1+ \sqrt{\rho_0^2/4}\right) \left(\dfrac{\widetilde{M}}{\rho_0/2} \right)^2 \right\}\\
        &\leq C(n) \exp \left\{ C(n) \left( \dfrac{\widetilde{M}}{\rho_0} \right)^2   \right\},
    \end{align*}
    where $\widetilde{M}= \mathrm{osc}_{\widetilde{B}_{\rho_0/2}}\widetilde{u}$. Next, we estimate $\widetilde{M}$. From the relation \eqref{eq: relation of domains}, we have
    \begin{align*}
        \widetilde{M}&= \sup_{|\widetilde{x}|, |\widetilde{x}'| \leq \rho_0/2 } |\widetilde{u}(\widetilde{x})- \widetilde{u}(\widetilde{x}')| \leq \sup_{|x|,|x'|\leq \rho_0} |\cos\beta (u(x)-u(x')) - \sin\beta (x_n-x_n')|\\
        &\leq \sup_{|x|,|x'|\leq \rho_0} |(\cos\beta \tan\theta_0 - \sin\beta) (x_n-x_n') | + 2\sigma(\rho_0)\\
        &\leq \left( |\cos\beta \tan\theta_0 - \sin\beta| +1  \right) \rho_0,
    \end{align*}
    where we used that $|u(x)-\tan\theta_0 x_n|\leq \sigma(|x|)$ and $\sigma(\rho_0)\leq \rho_0/2$ by our choice of $\rho_0$ in Step 1. Note that
    \[ |\cos\beta \tan\theta_0 - \sin\beta| = \dfrac{|\sin(\theta_0-\beta)|}{\cos\theta_0}\leq \dfrac{1}{\cos\theta_0}\leq \dfrac{1}{\cos(\arctan \Gamma) }. \]
    Therefore, Korevaar's gradient estimate implies that 
    \begin{equation}\label{eq: gradient bound of tilde u}
        \sup_{\widetilde{B}_{\rho_0/4}}|D\widetilde{u}| \leq  C(n,\Gamma):=\tan\gamma ,\quad \text{for some universal}\ \gamma=\gamma(n,\Gamma)\in \left(0,\dfrac{\pi}{2}\right).
    \end{equation}

    \vspace{0.3cm}
    \emph{Step 3. Improvement of $\theta_r$ at smaller scales.} By the chain rule, we compute the relation between $Du(x)$ and $D\widetilde{u}(\widetilde{x})$ as in \eqref{eq: relation bet derivatives}, that is
    \[  \widetilde{u}_i = \dfrac{u_i}{\cos\beta + \sin\beta u_n} \quad\text{and}\quad  \widetilde{u}_n = \dfrac{\cos\beta u_n -\sin\beta}{\cos\beta + \sin\beta u_n}= \tan (\arctan u_n -\beta). \]
    Hence, \eqref{eq: gradient bound of tilde u} implies that
    \[ \inf_{B_{\rho_0/4c_0}} \tan( \arctan u_n-\beta )\ge -\tan\gamma, \quad \Longrightarrow\quad \theta_{\frac{\rho_0}{4c_0}}\ge \beta -\gamma. \]
    Since the above rotation argument holds for any rotation angle $\beta \in \left( 0, \frac{\pi}{2} + \theta_1 \right)$, and $\rho_0, c_0, \gamma$ are independent of $\beta$, we conclude that
    \begin{equation}\label{eq: improvement of theta_r}
        \theta_{\frac{\rho_0}{4c_0}} \ge \theta_1 + \left(\dfrac{\pi}{2}-\gamma\right).
    \end{equation}

    In summary, if at the large scale $B_1$, we have $\theta_1 \leq -\frac{\pi}{2} + \theta_0$, then at the smaller scale, we can improve the lower bound of $\arctan u_n $ by 
    \[ \theta_\eta \ge \theta_1 + \left(\dfrac{\pi}{2}-\gamma \right), \quad \text{where}\ \eta=\eta (\Gamma,\sigma):= \dfrac{\rho_0}{4c_0}. \]
    Since $\eta=\eta(\Gamma,\sigma)$ and $\gamma=\gamma(n,\Gamma)$ are universal constants, we can also start the above rotation argument from any scale $B_r$. Once $\theta_r \leq -\frac{\pi}{2} + \theta_0 $, we can improve it at the smaller scale that
    \[ \theta_{\eta r} \ge \theta_r + \left( \dfrac{\pi}{2} - \gamma \right). \]
    Therefore, repeating this improvement finitely many times, there must exist a universal $r^*=r^*(n,\Gamma, \theta_0, \sigma) \in (0,1)$ such that $\theta_{r^*} > -\frac{\pi}{2} + \theta_0$. 
    
    Note that this $r^*$ may depend on $\theta_0$. Since the angle of each improvement is universal, the number of iterations will not exceed $k_0:= \left[ \frac{\pi}{\pi/2 -\gamma} \right]+1 $. Hence, $r^*$ has a lower bound $r_*:= \eta^{k_0}$ which depends only on $n, \Gamma$ and $\sigma$. Therefore, we can choose $r^*$ smaller so that $r^*= r^*(n,\Gamma,\sigma)$ is independent of $\theta_0$ and  $\theta_{r^*} > -\frac{\pi}{2} + \theta_0$. Consequently, the downward rotation by angle $\theta_0$ is valid at the scale of $B_{r^*}$. 
\end{proof}

\vspace{0.3cm}
\noindent\emph{Proof of Proposition \ref{PROP: gradient and Lip est revisited}.} By translation and subtracting a constant from $u$, we may assume without loss of generality that $x_0=0$ and $u(0)=0$. It suffices to consider the case of $|Du(0)|>0$. By rotating the horizontal basis $\{ E_1, \cdots, E_n \}$ appropriately, we may further assume that $Du(0)= |Du(0)| E_n $ with $|Du(0)|= \tan\theta_0$  for some $\theta_0\in \left( 0, \frac{\pi}{2} \right)$. In this proof, by a universal constant we mean a constant depending only on $n,\Gamma$ and the modulus $\sigma$.

By Lemma \ref{LEMMA: rotation revisited}, there exists a universal constant $r^*\in (0,1)$ such that $\Sigma\cap (B_{r^*}\times\bb{R})$ remains graphical under the rotated coordinates \eqref{eq: rotated coordinates}. Moreover, the new graph function $\widetilde{u}$ is defined on $\widetilde{x}(B_{r^*})\supset \widetilde{B}_{r^*/2}$. Note that
\[ \widetilde{u}(\widetilde{x}) = -\sin\theta_0 x_n + \cos \theta_0 u(x) = \dfrac{1}{W(0)} (u(x)-|Du(0)|x_n) =\dfrac{1}{W(0)} g(x). \]
It follows that $\|\widetilde{u}\|_{L^{\infty}(\widetilde{B}_{r^*/2})} \leq \sigma(r^*/2)$. Since $\widetilde{u}$ solves the scalar curvature equation $\sigma_{2}(\kappa(\widetilde{u})) =1$ on $\widetilde{B}_{r^*/2}$, applying the rescaled version of Korevaar's gradient estimate to $\widetilde{u}$ on $B_{r^*/2}$, we obtain
\begin{align*}
    \|D\widetilde{u}\|_{L^{\infty}(\widetilde{B}_{r^*/4})} &\leq C(n)\exp\left\{ C(n) \left(1 + \dfrac{r^*}{2}\right)  \left( \dfrac{\|\widetilde{u}\|_{L^{\infty}(\widetilde{B}_{r^*/2})}}{r^*/2} \right)^2   \right\} \\
    &\leq C(n) \exp \left\{  C(n) \left(  \dfrac{\sigma(r^*/2)}{r^*/2}\right)^2  \right\} := \widetilde{\Gamma},
\end{align*}
where $\widetilde{\Gamma}$ is a universal constant. Next, we can apply the rescaled version of the linearly dependent gradient estimate (Proposition \ref{PROP: linear dependent grad est}) to $\widetilde{u}$ on $\widetilde{B}_{r^*/4}$ to obtain
    \begin{equation}\label{eq: 7.7}
        \sup_{\widetilde{B}_{r/2}(\widetilde{y})} |D\widetilde{u}| \leq \dfrac{C(n,\widetilde{\Gamma})}{r} \underset{\widetilde{B}_{r}(\widetilde{y})}{\mathrm{osc}} \widetilde{u}, \quad \text{for any small ball}\ \widetilde{B}_{r}(\widetilde{y})\subset \widetilde{B}_{r^*/4}.
    \end{equation}

By similar computations as in the proofs of Claim \ref{CLAIM: claim 5} and Claim \ref{CLAIM: claim 6}, we deduce that the coordinate change map $x\mapsto \widetilde{x}$ is a bi-Lipschitz homeomorphism from $B_{\rho_0}$ to $\widetilde{x}(B_{\rho_0})$ with universal Lipschitz constants, where $\rho_0$ is the universal constant from Claim \ref{CLAIM: claim 6}. Thus, for any ball $B_{r}(y)\subset B_{\rho_0}$,  we have the inclusion
    \[ \widetilde{B}_{r/C}(\widetilde{y}) \subset \widetilde{x}(B_r(y))\subset \widetilde{B}_{Cr}(\widetilde{y}). \]
where $\widetilde{y}=\widetilde{x}(y)$ denotes the image of $y$ under the coordinate change.

For any small ball $B_{r}(y)\subset B_{\frac{r^*}{8C}}$, we have $|\widetilde{y}|\leq C|y|\leq \frac{r^*}{8} $ by the bi-Lipschitz bound. Combined this with $r< \frac{r^*}{8C}$, we have $\widetilde{x}(B_{r}(y)) \subset \widetilde{B}_{Cr}(\widetilde{y}) \subset  \widetilde{B}_{\frac{r^*}{4}}$. From the relation between $D\widetilde{u}(\widetilde{x}) $ and $Du(x)$ as in \eqref{eq: relation bet derivatives}:
\[ \widetilde{u}_i = \dfrac{u_i}{\cos\theta_0 + \sin\theta_0 u_n} \quad\text{and}\quad  \widetilde{u}_n = \dfrac{\cos\theta_0 u_n -\sin\theta_0}{\cos\theta_0 + \sin\theta_0 u_n}= \dfrac{u_{n}-|Du(0)|}{1+ |Du(0)|u_n}. \]
we have
\[ |Dg(x)|=|Du(x)-Du(0)|\leq C(\Gamma) |D\widetilde{u}(\widetilde{x})|. \]
Therefore, the gradient bound \eqref{eq: 7.7} implies that
\begin{equation*}
    \sup_{B_{r}(y)} |Dg| \leq C\sup_{B_{Cr}(\widetilde{y})}|D\widetilde{u}| \leq \dfrac{C}{r}\underset{\widetilde{B}_{2Cr}(\widetilde{y})}{\mathrm{osc}} \widetilde{u} \leq  \dfrac{C}{r} \underset{B_{2C^2r}(y)}{\mathrm{osc}} g. 
\end{equation*}
This completes the proof of part $(i)$. The Lipschitz estimates in part $(ii)$ follow from the same argument as   in the proof of Corollary \ref{COR: Lip est}. \hfill\qed

\vspace{0.3cm}
Now, we are ready to prove the Alexandrov regularity for viscosity solutions to \eqref{eq: scalar curvature eq}.

\vspace{0.3cm}
\noindent\emph{Proof of Proposition \ref{PROP: Alexandrov revisited}.} It suffices to verify $(\mathrm{H}3)$ of Proposition \ref{PROP: general Alexandrov} for $u$. Fix a small constant $\rho_1>0$ such that the curvature condition $\frac{n-2}{n}\sigma_{2}(\kappa_{\partial B_{\rho_1}})>1$ from \cite[Theorem 4.1]{Ivochkina} holds. We solve the Dirichlet problem with smooth approximating boundary data to obtain a sequence of smooth solutions $\{u_k\}$ on $B_{\rho_1}$ that converge to $u$ locally uniformly on $B_{\rho_1}$. By Korevaar's gradient estimate, up to a subsequence, we may assume that $u_k \to u$ in $C^{0,\alpha}_{\mathrm{loc}}(B_{\rho_1}) \cap W^{1,p}_{\mathrm{loc}}(B_{\rho_1})$ for all $\alpha\in (0,1)$ and $p>1$. Hence, up to a further subsequence, we may also assume that $Du_k \to Du$ a.e. in $B_{\rho_1}$. 

Fix a point $x_0\in B_{\rho_1}$ such that $u$ is differentiable at $x_0$ and $Du_k(x_0) \to Du(x_0)$. Such points exist almost everywhere in $B_{\rho_1}$. Let $g_k(x)= u_k(x) - u_k(x_0) - Du_k(x_0)\cdot (x-x_0)$ and $g(x)= u(x) - u(x_0) - Du(x_0)\cdot (x-x_0)$. Then $g_k $ converges to $ g $ locally uniformly on $B_{\rho_1}$. Since $x_0$ is a differentiable point of $u$, there exists a modulus $\sigma(r)=o(r)$ as $r\to 0$ such that $|g(x)|\leq \sigma(|x|)$.   For $k$ sufficiently large, we have $|g_k(x)|\leq 2\sigma(|x|)$. By Proposition \ref{PROP: gradient and Lip est revisited} $(ii)$, there exists a small universal constant $r_0$ independent of $k$, such that
 \begin{equation}
        \sup_{ \substack{x,y\in B_{r}(x_0) \\ x\neq y} } d_{x,y}^{n+1} \dfrac{|g_k(x)-g_k(y)|}{|x-y|} \leq C \int_{B_{r}(x_0)}|g_k|, \quad \text{for all small}\ r<r_0.
\end{equation} 
where $C$ is independent of $k$ and $r$. Passing to the limit as $k\to\infty$, we obtain 
\begin{equation}\label{eq: 7.9}
        \sup_{ \substack{x,y\in B_{r}(x_0) \\ x\neq y} } d_{x,y}^{n+1} \dfrac{|g(x)-g(y)|}{|x-y|} \leq C \int_{B_{r}(x_0)}|g|, \quad \text{for all small}\ r<r_0, 
\end{equation}
where $C$ is independent of $r$. Thus the Lipschitz estimate \eqref{eq: 7.9} holds for almost every $x_0\in B_{\rho_1}$. By replacing the ball $B_{\rho_1}$ with any ball $B_{\rho_1}(y)\subset B_1$, we conclude that the Lipschitz estimate \eqref{eq: 7.9} holds for almost every $x_0\in B_1$, which verifies hypothesis $(\mathrm{H}3)$. \hfill\qed

\section{Interior regularity for $C^1$ viscosity solutions}\label{Section: Regularity}
\noindent\emph{Proof of Theorem \ref{THM: regularity for C^1 solutions}.} By subtracting a constant from $u$, we may assume that $u(0)=0$. Since $u$ is $C^1$, we may rotate $\Sigma$ so that $Du(0)=0$.  From the $C^1$ regularity of $u$, there exists a modulus $\sigma(r)=o(r)$ as $r\to 0$ such that
\[ \sup_{B_r}|u| \leq \sigma(r) \quad \text{for all}\ r<r_0, \]
where $r_0$ depend only on $u$. By taking $r_0$ smaller if necessary, we may ensure that the curvature condition from \cite[Theorem 4.1]{Ivochkina} holds. By solving the Dirichlet problem with smooth approximating boundary data, we obtain a sequence of smooth solutions $\{u_k\}$ that converge locally uniformly to $u$ on $B_{r_0}$. Without loss of generality, we may assume that $\|u_k\|_{L^{\infty}(B_{r_0})}\leq 2\|u\|_{L^{\infty}(B_{r_0})}$.

First, applying the rescaled Korevaar's gradient estimate (Proposition \ref{PROP: Korevaar grad est}), we obtain a rough gradient bound for $u_k$:
\begin{align*}
    \sup_{B_{r_0/2}}|Du_k| &\leq C(4) \exp\left\{ C(4) (1+r_0) \left(\dfrac{\|u_k\|_{L^{\infty}(B_{r_0})}}{r_0} \right)^2 \right\} \\
    &\leq C(4) \exp \left\{ C(4) \left( \dfrac{\sigma(r_0)}{r_0} \right)^2  \right\}:=\Gamma,
\end{align*}
where $\Gamma$ is independent of $k$. Next, we apply the linearly dependent gradient estimate (Proposition \ref{PROP: linear dependent grad est}) to $u_k$. For any $r< r_0/2$, we have
\begin{align*}
    \sup_{B_{r/2}}|Du_k| \leq \dfrac{C(4,\Gamma)}{r} \sup_{B_r}|u_k|\leq \dfrac{C(4,\Gamma)}{r} \left( \sup_{B_r}|u_k-u| + \sigma(r) \right).
\end{align*}
We fix a universally small $r_1>0$ such that $\sigma(r_1)/r_1 < \mu/2$, where $\mu$ is the dimensional constant appearing in the small gradient condition of Corollary \ref{COR: rescaled doubling}. For sufficiently large $k$, we can ensure that 
\[ \sup_{B_{\frac{r_1}{2}}}|Du_k|\leq \mu. \]
Thus, by Corollary \ref{COR: rescaled doubling}, the doubling inequality holds for $u_k$ uniformly on $B_{r_1/6}$. That is, for any $y\in B_{r_1/ 18}$ and $0<\rho < r_1/9$, we have
\begin{equation}
    \sup_{B_{\frac{r_1}{6}}} H(u_k) \leq C \sup_{B_{\rho}(y)} H(u_k).
\end{equation}
We emphasize that the constant $C$ is independent of $k$.

Finally, following the same argument as in Step 3 of the proof of Theorem \ref{THM: curvature est}, using the Alexandrov regularity (Proposition \ref{PROP: Alexandrov revisited}) and Savin's small perturbation theorem \cite{Savin, Fan}, we can find a small ball $B_{\rho}(y)$ with $y\in B_{r_1/18}$ and $0<\rho< r_1/9$ , such that $\sup_{B_{\rho}(y)} H(u_k) \leq C$ uniformly in $k$. Thus, the doubling inequality yields $H(u_k)\leq C$ uniformly in $k$ on $B_{r_1/6}$. By the Evans-Krylov-Safonov theory, $u_k\to u$ locally smoothly in $B_{r_1/6}$. This implies the smoothness of $u$ near $0$. By replacing $0$ with any point in $B_1$, the interior regularity $u\in C^{\infty}_{\mathrm{loc}}(B_1)$ follows. \hfill\qed

\bibliographystyle{amsalpha}
\bibliography{ref}
\end{document}